\documentclass{article}
\usepackage{amsmath}
\usepackage[utf8]{inputenc}
\usepackage{amssymb}
\usepackage{amsfonts}
\usepackage{graphicx}
\usepackage{ulem}
\usepackage[table,dvipsnames]{xcolor}
\usepackage{epstopdf}
\usepackage{amsthm} 
\usepackage{thmtools}
\usepackage{color}
\usepackage[colorlinks]{hyperref}
\usepackage{cleveref}
\usepackage[comma,numbers,square,sort&compress]{natbib} 
\usepackage{geometry} 
\usepackage{enumitem} 
\usepackage{stmaryrd} 
\usepackage{authblk}

\usepackage{tikz}
\usepackage{pgfplots}
\usetikzlibrary{arrows.meta,positioning,decorations.markings} 
\definecolor{grey}{cmyk}{0,0,0,0.5}

\newcommand{\RR}{\mathcal{R}} 
\newcommand{\R}{\mathbb{R}}
\newcommand{\N}{\mathbb{N}}
 
\newcommand{\supp}{\text{supp}}
\newcommand{\A}{\mathcal{A}} 
\newcommand{\V}{\mathcal{V}}
\newcommand{\W}{\mathcal{W}}
\renewcommand{\L}{\mathcal{L}}
\newcommand{\X}{\mathcal{X}}
\newcommand{\B}{\mathcal{B}}
\newcommand{\Co}{\mathcal{C}}
\newcommand{\OO}{\mathcal{O}}
\renewcommand{\d}{\mathrm{d}} 
\newcommand{\n}{2} 
\newcommand{\ep}{\varepsilon}

\newtheorem{theorem}{Theorem}[section]
\newtheorem{proposition}[theorem]{Proposition}
\newtheorem{definition}[theorem]{Definition}
\newtheorem{assumption}[theorem]{Assumption}
\newtheorem{corollary}[theorem]{Corollary}
\newtheorem{remark}[theorem]{Remark}
\newtheorem{lemma}[theorem]{Lemma}
\newtheorem{exemple}[theorem]{Example}

\begin{document}

\title{Competitive exclusion and coexistence in a model with age-structured resource
}
\date{}
\author[1]{S.~Girel}
\affil[1]{Université Côte d’Azur, CNRS, LJAD, Parc Valrose, 06108 Nice, France}
		
\author[2]{Q.~Richard}
\affil[2]{Université de Montpellier, CNRS, IMAG, 34090 Montpellier, France.}

\maketitle

\hrule

\begin{abstract}
In this paper, we introduce and analyse a model in which a shared, age-structured resource is exploited by two competing populations through age-dependent interaction kernels. It provides a general framework for studying resource-mediated competition in epidemiology, immunology and ecology. 

We use integrated semigroups theory to establish the well-posedness of the model and its fundamental properties, including positivity and the existence of a global attractor. Then, by means of Lyapunov functionals, persistence results and generalised LaSalle principle, we fully characterize the asymptotic dynamics in terms of the  competitors's basic reproduction numbers and invasive fitnesses.   

Our results demonstrate that in addition to the classical competitive exclusion, the system can exhibit globally asymptotically stable coexistence, an outcome allowed by a mechanism of age-mediated niche partitioning.
\end{abstract}

\textit{Keywords:} Global asymptotic stability; Lyapunov functionals; Integrated semigroups theory; Niche partitioning; Competitive exclusion; Coexistence.

\vspace{0.3cm}

\hrule

\begin{figure} 
\begin{center}
\begin{tikzpicture}[>=Stealth, node distance=2cm, thick, main/.style = {draw, circle}]
\tikzstyle{fleche}=[->,>=Stealth,thick,rounded corners=4pt]

    \node[main] (A) [draw, rectangle, minimum width=4.5cm, minimum height=1cm] {$x(t,a)$};
    \node[main] (B) [above = 1.5cm of A] {$y_1(t)$};
    \node[main] (C) [below = 1.5cm of A] {$y_2(t)$};

    \draw[fleche] (A) -- node[midway, right] {$\textcolor{black!30!blue}{c_1\int_0^\infty \beta_1(a)x(t,a)\d a}$}(B);
    \draw[fleche] (A) -- node[midway, right] {$\textcolor{black!30!blue}{c_2\int_0^\infty \beta_2(a)x(t,a)\d a}$}(C);
  
    \draw[fleche,black!30!green] (-3,0) -- node[midway,above] {$\Lambda$} (A);
    \draw[fleche,black!10!red] (A) -- node[midway,above] {$\mu_x(a)$}(3.3,0);
    \draw[fleche,black!10!red] (B) -- node[midway,above] {$\mu_1$}(1,2.5);
    \draw[fleche,black!10!red] (C) -- node[midway,above] {$\mu_2$}(1,-2.5);
  
 \end{tikzpicture}
\end{center}\caption{Graphical representation of model \eqref{Eq:model}}
\end{figure}

\section{Introduction}

Understanding the outcome of competition for a shared resource is a recurrent question in biological fields such as ecology, epidemiology or immunology. In this context, the competitive exclusion principle, or Gause's principle, states that only the strongest competitor, typically characterized by the largest basic reproduction number, survives while all others go extinct. 

This principle faces limitations, as coexistence is frequently observed in real biological systems and is often explained by niche partitioning mechanisms, whereby competitors adopt different strategies to exploit the resource. These include spatial or temporal separation of predation, or specialization on distinct subsets of the resource \cite{Murdoch2003,Chesson2000, Kotepui2020}. These observations have motivated the development of numerous mathematical models to identify which mechanisms may relax competitive exclusion. Many of them are extensions of the  Kermack–McKendrick epidemiological model or of the competitive Lotka-Volterra equations \cite[Sections 3.5 and 10]{Murray89} incorporating various sources of variability such as interaction terms that do not obey the law of mass action  \cite{Castillo2020,Castillo2022,Chesson2000}, spatial heterogeneity through diffusion and/or advection \cite{Qin2022,Tuncer2012, Ackleh2016}, time delay \cite{Freedman1989} or an effect of seasonality \cite{Andreasen2023,Martcheva2009}.

In this paper, we investigate the effect of age-structure of the resource population, motivated by the idea that specialization of competitors for different ages of the resource might allow for competitors coexistence \cite{Roos2008}. We consider the following model, in which two populations, $y_1$ and $y_2$, compete for the same age-structured resource $x$:

\begin{equation}
\label{Eq:model}
\left\{
\begin{array}{rcl}
\partial_t x(t,a)+\partial_a x(t,a)&=&-\mu_{x}(a) x(t,a)-x(t,a)(\beta_1(a)y_1(t)+\beta_2(a)y_2(t)),\\
y_1'(t)&=&y_1(t)c_1\int_0^\infty \beta_1(a)x(t,a)da-\mu_{1} y_1(t),\\
y_2'(t)&=&y_2(t)c_2\int_0^\infty \beta_2(a)x(t,a)da-\mu_{2} y_2(t),\\
x(t,0)&=&\Lambda, \\
(x(0,\cdot),y_1(0),y_2(0))&=& (x_0,y_{1,0},y_{2,0})\in L^1(\R_+,\R_+)\times \R_+^2.
\end{array}
\right.
\end{equation}

Death rates are described by the parameters $\mu_i$. The functions $\beta_i$ are age-dependent interaction kernels, allowing each competitive population to exploit different segments of the structured resource. This heterogeneity captures biological situations where the resources differ in accessibility and suitability across their life cycle. In ecological systems, this corresponds for instance to predators specializing on juvenile versus adult prey, or plants versus fruits; in epidemiology or immunology, it reflects the spread of two strains among susceptible individuals or immune cells, where the infection probability is age or stage dependent. 

Unlike epidemiological models, where interactions induce a direct transfer from the susceptible to the infected class, ecological predation involves losses that are not fully converted into predator growth. This is embedded in the conversion factors $c_1$ and $c_2$, as in the Lotka-Volterra equations, which are set to $1$ in epidemiological context.

An assumption of the model is that the influx of resources of age 0 is constant, \textit{i.e.} $x(t,0)=\Lambda$, rather than linear in the density of resources. This assumption is standard in mathematical immunology and epidemiology \cite{martcheva2015}. While less common in ecology, it is relevant in situations involving allochthonous resources \cite{Nevai2012,Jansen2018}, \textit{i.e.} resources that originate from a place other than where they will live, such as algae dragged by a river to the shore, seeds spread by the wind or by animals. It also makes sense when the resource is a product of human activity, such as breeding, plantation, or industrial byproduct. It is also appropriate for competitive species in a chemostat \cite{Freedman1989}, where the controlled resource supply is constant.

The problem of competition for the same age-structured resource with a linear source term has, to our knowledge, not been investigated. Only the case of a single (unstructured) predator hunting an age-structured prey has been analysed in \cite{RichardPerasso2019} (see also the references therein), then generalising the predator-prey Lotka-Volterra equations to age-structured prey. The authors show that not only this system can produce periodic solution (as in the Lotka-Volterra ODE system), but also extinction of both population, convergence to a positive equilibrium or unbounded solutions.

In \cite{Djidjou2022b}, Djidjou-Demasse et al. proposed a within-host competition model in which a population of red blood cells (the resource) can be infected by multiple \textit{Plasmodium} strains. The resource is discretely structured into three stages, while the parasite populations are structured by a continuous age variable. Moreover, infection is mediated by the release of merozoites and thus deviates from the mass-action assumption.  Although this modeling framework is tailored to a specific biological system, it addresses similar questions to those we are considering here. In particular, the authors show that stable coexistence may occur under explicit conditions on the parameters, a feature that we also recover in our more general setting.

Models with structures closely related to \eqref{Eq:model} have been considered in which the competitors are age-structured while the resource is not and with $c_i=1$.  First Magal, McCluskey and Webb \cite{MagCluskWebb2010} analyzed the single-strain case, later extended to multiple strains by Martcheva and Li  \cite{MartchevaLi2013}, and further refined in \cite{Richard2020}, for the two strains case, in which the degenerate situation where two strains sharing equal reproduction numbers is considered. In a recent work \cite{GirelRichard26}, we completed this line of research by treating the case of an arbitrary number of strains where the maximal reproduction number can be shared by several strains. These works consistently show that age-structure in the competitors alone does not allow for robust coexistence: competitive exclusion holds generically while coexistence can only occur for singular parameter values, resulting in a continuum of equilibria.

Instead, in this paper we show that considering age-structure in the resource has a fundamentally different effect, as it allows for structurally stable coexistence  under suitable conditions on the kernels $\beta_i$. In particular, a necessary condition is that $\beta_2$ and $\beta_1$ are not proportional,  which is consistent with the concept of  niche partitioning.

This paper is organized as follows. We first establish the well-posedness of the model using integrated semi-group theory and the method of characteristics. We characterize the  omega-limit sets of each initial condition and prove the existence of an asymptotically stable global attractor. Then we state the conditions of existence of equilibria in terms of basic and invasion reproduction numbers.

Next, we define candidate Lyapunov functionals. Those functions are not defined on the whole phase space. We prove that, thanks to the regularization induced by the boundary condition, they are defined and differentiable along all complete orbits included in suitable invariant subsets, and conclude that they are valid Lyapunov functionals on those subsets. In particular, the proof that those functions are well defined on the omega-limit sets requires to demonstrate the uniform persistence of the flow,  using criteria from \cite{SmithThieme2011,HaleWaltman89} based on the behaviour of the flow near the equilibria included in the boundary.

Finally, we compute the basin of attraction of each equilibrium and prove its stability. This is achieved by considering that, under suitable conditions,  \textit{i)} the omega-limit set of each compact set is attractive, \textit{ii)} a Lyapunov functional is defined on this set and \textit{iii)} this omega-limit  set contains an equilibrium. Using a generalisation of the LaSalle invariance principle \cite[Theorem 2.52]{SmithThieme2011}, we conclude that each omega-limit set actually reduces to a single equilibrium.

\section{Framework and preliminary results}

We begin this section by introducing the framework and the different notations. Then we establish several properties of the semiflow, such as its well-posedness, positivity, dissipativity as well as the existence of a global attractor. Finally we state the fundamental properties of the omega-limit sets.

\subsection{Assumptions and notations}
Throughout this article, we consider that the following assumption is satisfied.
\begin{assumption}\label{Assum:param}
We suppose that:
\begin{itemize}
    \item $\mu_x, \beta_1$ and $\beta_2$ are in $L^\infty(\R_+,\R_+)$,
    \item $\Lambda, \mu_1$ and $\mu_2$ are in $\R_+^*$,
    \item there exists $\mu_0>0$ such that $\mu_x(a)\geq \mu_0$ a.e. $a\geq 0$.
\end{itemize}
\end{assumption}

We introduce the notations $\underline{\mu}=\min\{\mu_0,\mu_1,\mu_2\}$, $\overline{\mu}=\max\{\|\mu_x\|_{L^\infty},\mu_1,\mu_2\}$, $\underline{c}=\min\{1,c_1,c_2\}$ and $\overline{c}=\max\{1,c_1,c_2\}$. 

Let us define, $X_+=L^1(\R_+,\R_+)\times \R_+^2$ and, for $i\in\{1,2\}$, the sets $X_i=\{(x,y_1,y_2)\in X_+: y_i>0\}$ and $\partial X_i = \{(x,y_1,y_2)\in X_+: y_i=0\}$.

\begin{definition}
Let $\Phi:\R_+\times X_+\to X_+$ be a semiflow. A set $S_1\subset X_+$ is said to attract a set $S_2\subset X_+$ if $S_1\neq \emptyset$ and $$d(\Phi_t(S_2),S_1):= \underset{s_2\in S_2}{\sup} \underset{s_1\in S_1}{\inf}  ||\Phi_t(s_2)-s_1||_X \underset{t\to +\infty}{\to}0.$$ 
\end{definition}

\subsection{Qualitative properties of the semiflow}
We first use the theory of integrated semigroups to assess the existence of a unique mild (integrated) solution to system \eqref{Eq:model} for any initial condition $(x_0,y_{1,0},y_{2,0})\in X_+$. We use the notations and definitions from  \cite{MagalRuan2018,SmithThieme2011} and introduce the linear operator $A:D(A)\subset \X \to \X$ by 
$$A\begin{pmatrix}
\begin{pmatrix}
0 \\
x
\end{pmatrix}\\
y_1\\
y_2
\end{pmatrix}=\begin{pmatrix}
\begin{pmatrix}
-x(0)\\
-x'-\mu_x x
\end{pmatrix} \\
-\mu_1 y_1\\
-\mu_2 y_2
\end{pmatrix}$$
where $D(A)=(\{0\}\times W^{1,1}(\R_+,\R))\times \R^2$ and $\X=(\R\times L^1(\R_+,\R))\times \R^2$. We also define $\X_0=\overline{D(A)}=(\{0\}\times L^1(\R_+,\R))\times \R^2$ and its positive cone $\X_{0+}=(\{0\}\times L^1(\R_+,\R_+))\times \R_+^2$. We introduce $A_0$, the part of $A$ on $\X_0$, \textit{i.e.} the restriction of $A$ to $$D(A_0)=\{x\in D(A)\ :\ Ax\in\overline{D(A)}\}.$$
Finally, we define the non-linear function $F:\X_0\to \X$  by 
$$F\begin{pmatrix}
\begin{pmatrix}
0 \\
x
\end{pmatrix}\\
y_1\\
y_2
\end{pmatrix}
=
\begin{pmatrix}
\begin{pmatrix}
\Lambda \\
-y_1\beta_1 x - y_2\beta_2 x
\end{pmatrix} \\
c_1 y \int_0^\infty \beta_1(a)x(a) \d a\\[0.1cm]
c_2 z \int_0^\infty \beta_2(a)x(a) \d a
\end{pmatrix}.$$
Then system \eqref{Eq:model} can be rewritten as the following abstract semilinear Cauchy problem with $u(t)=((0,x(t,\cdot)),y_1(t),y_2(t))$
\begin{equation}
\label{Eq:modelCauchy}
\forall t\geq 0,\quad u'(t)=Au(t)+F(u(t)) \ ;\ u(0)=u_0\in \X_{0+}.
\end{equation}
Since $\X_0=\overline{D(A)}\subsetneq \X$, \textit{i.e.} $A$ is a non-densely defined operator, integrated solutions of \eqref{Eq:modelCauchy} will belong to $\X_0$. Moreover, the non-linear operator $A+F$ satisfies the following properties. 

\begin{lemma}\mbox{}  
\label{Lemma:PropCauchy}
\begin{enumerate}
    \item $A$ is a Hille-Yosida operator,  with $(-\underline{\mu},+\infty)\subset \rho(A)$, where $\rho(A)$ denotes the resolvent set of $A$, $A$ is resolvent positive (\textit{i.e.} $\forall \lambda\in\rho(A),\ (\lambda I-A)^{-1}\X_+\subset \X_+)$ and
    $$\|(\lambda I-A)^{-n})\|_{\L(\X)}\leq \dfrac{1}{(\lambda+\underline{\mu})^n}, \ \forall \lambda>-\underline{\mu}, \ \forall n\geq 1.$$
    \item $F$ is a locally Lipschitz continuous function: $\forall r>0$, $\exists K_r>0$ such that 
    $$\|F(u_0)-F(\tilde{u}_0)\|_{\X}\leq K_r\|u_0-\tilde{u}_0\|_{\X}$$
    for every $(u_0,\tilde{u}_0)\in (\X_0\cap \B_{\X}(0,r))^2$ where $\B_{\X}(0,r)$ is the ball of $\X$ centered at $0_\X$ with radius $r$.\end{enumerate} 
\end{lemma}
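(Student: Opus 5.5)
The plan is to compute the resolvent of $A$ explicitly, read off the Hille--Yosida estimate and positivity from the explicit formula, and then check the Lipschitz property of $F$ term by term.

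\textbf{Resolvent.} Fix $\lambda>-\underline{\mu}$ and $((\alpha,\psi),z_1,z_2)\in\X$. We solve
$$(\lambda I-A)((0,x),y_1,y_2)=((\alpha,\psi),z_1,z_2).$$
The scalar components give $y_i=z_i/(\lambda+\mu_i)$, which is well defined because $\lambda+\mu_i\geq\lambda+\underline{\mu}>0$. The $L^1$ component reduces to the linear ODE
$$x'+(\lambda+\mu_x)x=\psi,\qquad x(0)=\alpha.$$
Its unique solution is
$$x(a)=\alpha e^{-\int_0^a(\lambda+\mu_x(s))\d s}+\int_0^a e^{-\int_\sigma^a(\lambda+\mu_x(s))\d s}\psi(\sigma)\d\sigma .$$
Since $\mu_x\geq\mu_0$ a.e., the exponential factors are bounded by $e^{-(\lambda+\mu_0)(a-\sigma)}$. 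This gives $x\in W^{1,1}$, which proves $(-\underline{\mu},+\infty)\subset\rho(A)$. Positivity is immediate: every kernel in the formula is nonnegative, so nonnegative data $(\alpha,\psi,z_1,z_2)$ produce a nonnegative solution.

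\textbf{Norm estimate.} Integrating $|x|$ and using Fubini gives
$$\|x\|_{L^1}\leq \frac{|\alpha|+\|\psi\|_{L^1}}{\lambda+\mu_0}.$$
With the norm $\|((\alpha,\psi),z_1,z_2)\|_\X=|\alpha|+\|\psi\|_{L^1}+|z_1|+|z_2|$, this yields
$$\|(\lambda I-A)^{-1}\|_{\L(\X)}\leq \frac{1}{\lambda+\underline{\mu}} .$$
Iterating this bound gives the estimate for $(\lambda I-A)^{-n}$, and therefore $A$ is Hille--Yosida with constants $M=1$, $\omega=-\underline{\mu}$.

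One step needs care: the resolvent output $((0,x),y_1,y_2)$ lies in $\X_0$, whereas the input lies in $\X$. This is harmless because the product norm is the same on both spaces, so the estimate composes under iteration without any loss. I expect this to be the main obstacle, in the sense that it is the only point requiring attention; everything else is direct.

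\textbf{Lipschitz property of $F$.} The component $\Lambda$ is constant, so it contributes nothing. Each remaining term is bilinear in $(y_i,x)$ with a bounded kernel; note that the $y,z$ appearing in the definition of $F$ should be read as $y_1,y_2$. For $u_0,\tilde u_0$ in the ball of radius $r$, we write
$$y_i\beta_i x-\tilde y_i\beta_i\tilde x=(y_i-\tilde y_i)\beta_i x+\tilde y_i\beta_i(x-\tilde x),$$
and bound its $L^1$ norm by $\|\beta_i\|_\infty\, r\,\bigl(|y_i-\tilde y_i|+\|x-\tilde x\|_{L^1}\bigr)$. The integral terms $c_iy_i\int\beta_ix$ are handled in the same way, since
$$\Bigl|\int\beta_i x\Bigr|\leq\|\beta_i\|_\infty\|x\|_{L^1}.$$
Collecting these bounds gives the Lipschitz estimate with
$$K_r=2r(1+\overline{c})\max_i\|\beta_i\|_{L^\infty}.$$
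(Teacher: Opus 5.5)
Your proposal is correct and follows the paper's proof essentially step by step: you invert $\lambda I-A$ explicitly by variation of constants, read off positivity from the nonnegative kernels, iterate the bound $\|(\lambda I-A)^{-1}\|_{\L(\X)}\leq 1/(\lambda+\underline{\mu})$ by submultiplicativity, and estimate $F$ term by term as a bilinear map. The only differences are that your Lipschitz constant $2r(1+\overline{c})\max_i\|\beta_i\|_{L^\infty}$ is a valid but cruder version of the paper's $K_r=r\sum_{i=1}^2\|\beta_i\|_{L^\infty}(1+c_i)$, and that the domain issue you flag as the main obstacle is immaterial, since $D(A)\subset\X_0\subset\X$ carries the induced norm.
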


\begin{proof}
\begin{enumerate}
\item Let $\lambda\in \R$, $((0,x),y_1,y_2)\in D(A)$ and $(\alpha,\varphi,c,d)\in \X$. Then $$(\lambda I_{D(A)}-A)
\begin{pmatrix}
\begin{pmatrix}
0 \\
x
\end{pmatrix}\\
y_1\\
y_2
\end{pmatrix}
=
\begin{pmatrix}
\begin{pmatrix}
\alpha \\
\varphi
\end{pmatrix}\\
c\\
d
\end{pmatrix}
\Leftrightarrow \left\{ \begin{array}{rcl}
x(0) &=& \alpha\\
x' &=&-(\lambda+\mu_x)x+\varphi \\
y_1&=&\frac{c}{\lambda+\mu_1}\\[0.1cm]
y_2&=&\frac{d}{\lambda+\mu_2}
\end{array}  \right.$$
where the  first two equations are equivalent to $x(a)=\alpha e^{-\int_0^a \lambda+\mu_x(s)\d s}+\int_0^a \varphi(s)e^{-\int_s^a\lambda + \mu_x(r) \d r}\d s$ for a.e. $a\geq 0$. 

Then for any $\lambda>\underline{\mu}$, $(\lambda I_{D(A)}-A)$ is a bijection from  $D(A)$ to $\X$ and  $A$ is resolvent positive. Moreover, the previous expression of $x$ leads, for any $\lambda>\underline{\mu}$ and any $V=  (\alpha,\varphi,c,d)\in \X$, to $$||(\lambda I_{D(A)} -A)^{-1}V||_\X  \leq \frac{\alpha + ||\varphi||_{L^1}}{\lambda+\underline{\mu}} +\frac{c}{\lambda+\mu_1}+\frac{d}{\lambda+\mu_2}\leq \frac{||V||_\X}{\lambda+\underline{\mu}}$$

and then, as the operator norm is sub-multiplicative, for any $n\geq 1$, to $$||(\lambda I_{D(A)} -A)^{-n}||_{\L(\X)}\leq ||(\lambda I_{D(A)} -A)^{-1}||^n_{\L(X)} =\underset{V\in\X, ||V||_\X=1}{\sup}||(\lambda I_{D(A)} -A)^{-1}V||^n_X\leq \frac{1}{(\lambda+\underline{\mu})^n}.$$

\item Standard computations show that $F$ is locally Lipschitz continuous, where the local Lipschitz constant can be chosen as $K_r=r\sum_{i=1}^2 ||\beta_i||_{L^\infty}(1+c_i)$. 
\end{enumerate}
\end{proof}

\begin{proposition}\label{Prop:solutions}
\begin{enumerate} 
    \item For any $u_0\in\X_{0+}$, there exists a unique non-negative maximal integrated solution $u$ to \eqref{Eq:modelCauchy}, \textit{i.e.} $u\in \Co([0,t_{max}(u_0)),\X_{0+}) $ such that 
    $$\int_0^t u(s)\in D(A), \quad \forall t\in[0,t_{\max}(u_0))$$
    and
    \begin{equation}
        u(t)=u_0+A\int_0^t u(s)\d s+\int_0^t F(u(s))\d s, \quad \forall t\in[0,t_{\max}(u_0)). \label{Eq:mild}
    \end{equation}
    \item This solution is global in time, \textit{i.e.} $t_{\max}(u_0)=+\infty$ and it defines a continuous semiflow $U:\R_+\times \X_{0+}\to \X_{0+}$, $(t,u_0)\longmapsto u(t)$. Moreover, for any $t\geq 0$ and any $u_0\in \X_{0+}$, the total mass $||u(t)||_{\X}$ of  system \eqref{Eq:model}  satisfies, for all $t\geq 0$:
    \begin{equation}\label{Eq:MassMajoration}
        ||u(t)||_{\X}\leq \overline{c}||u_0||_\X e^{-\underline{\mu}t} +  \frac{\overline{c}\Lambda}{\underline{\mu}}(1-e^{-\underline{\mu}t})  \leq \overline{c}\max\left(||u_0||_{\X},\frac{\overline{c}\Lambda}{\underline{\mu}}\right).
    \end{equation}

    \item The trivial bijection from $\X_{0+}$ to $X_+$, obtained by removing the first coordinate, defines a contiuous semiflow for \eqref{Eq:model}  
    $$\Phi:\R_+\times X_+ \to X_+,\quad v=(x_0,y_{1,0},y_{2,0}) \longmapsto  \Phi_t(v)=(\Phi_t^x,\Phi_t^{1},\Phi_t^{2})(v)=(x(t,.),y_1(t),y_2(t)).$$  
    
    For all $t\geq  0$ and $v=(x_0,y_{1,0},y_{2,0})\in X_+$, the semiflow satisfies $\Phi_t^x(v)(a)=\widetilde{\Phi_t^x}(v)(a)+\widehat{\Phi_t^x}(v)(a)$ with, a.e. $a\geq 0$,  
    \begin{equation}\label{Eq:Duhamel}
        \left\{\begin{array}{rcl} \widetilde{\Phi_t^x}(v)(a)&=&x_0(a-t)e^{-\int_{a-t}^a \left( \mu_x(s)+\sum_{i=1}^2\beta_i(s)\Phi_{s-a+t}^{i}(v)\right)\d s}\chi_{[t,\infty)}(a),\\
        \widehat{\Phi_t^x}(v)(a)&=&\Lambda e^{-\int_{0}^a              \left(\mu_x(s)+\sum_{i=1}^2\beta_i(s)\Phi_{t-a+s}^{i}(v)\right)\d s}\chi_{[0,t)}(a).
        \end{array}
        \right.
    \end{equation}
 
    \item The semiflow $\Phi$ is bounded-dissipative on $X_+$, \textit{i.e.} there exists a bounded subset that attracts each bounded subset of $X_+$.
    \item The semiflow $\Phi$ is asymptotically smooth, \textit{i.e.} for every non-empty, bounded and positively invariant set $B\subset X_+$, there exists a compact set $K\subset B$ such that $K$ attracts $B$. 
    \item The semiflow $\Phi$ is state-continuous, uniformly in finite time, \textit{i.e.} for every $(t,v)\in \R_+\times X_+$ and every $\ep >0$, there exists $\delta>0$ such that $$\forall (s,\tilde{v})\in[0,t]\times X_+ : \|v-\tilde{v}\|_{X}\leq \delta \Longrightarrow \|\Phi_s(v)-\Phi_s(\tilde{v})\|_{X}\leq \ep .$$
\end{enumerate}
\end{proposition}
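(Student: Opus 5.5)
Existence, uniqueness and positivity (item 1) will follow from Lemma \ref{Lemma:PropCauchy} and the standard semilinear theory for non-densely defined Cauchy problems \cite{MagalRuan2018}. Since $A$ is Hille--Yosida and $F$ is locally Lipschitz on $\X_0$, \cite{MagalRuan2018} provides a unique maximal integrated solution satisfying \eqref{Eq:mild}. For positivity, we use the resolvent positivity of $A$ and add a linear damping: for $u$ in a ball of radius $r$, the map $u\mapsto F(u)+\gamma u$ with $\gamma=r\sum_i\|\beta_i\|_{L^\infty}$ sends $\X_{0+}$ into $\X_+$. We therefore rewrite the problem with $A-\gamma I$ and $F+\gamma I$, and the positivity result of \cite{MagalRuan2018} (via the fixed point iteration in the positive cone) yields $u(t)\in\X_{0+}$.

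For item 2, we first derive the mass estimate formally on classical solutions and then extend it to integrated solutions by density and continuous dependence. Set $M(t)=\int_0^\infty x(t,a)\d a+\frac{y_1(t)}{c_1}+\frac{y_2(t)}{c_2}$. Then the predation terms cancel, giving $M'=\Lambda-\int\mu_x x-\mu_1y_1/c_1-\mu_2 y_2/c_2\le\Lambda-\underline{\mu}M$, so $M(t)\le M(0)e^{-\underline\mu t}+\frac{\Lambda}{\underline\mu}(1-e^{-\underline\mu t})$. Comparing $M$ with $\|u\|_\X$ through $\underline c,\overline c$ gives \eqref{Eq:MassMajoration} up to the stated constants; we will adjust the weights so that exactly $\overline c$ appears. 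The bound rules out blow-up, hence $t_{\max}=+\infty$. Continuity of the semiflow comes from the Lipschitz dependence on initial data in the abstract theory.

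Item 3 will follow from the method of characteristics. On classical solutions, we integrate the $x$-equation along $t-a=$ const, treating $y_1,y_2$ as given continuous functions; this yields \eqref{Eq:Duhamel} with the two cases $a\ge t$ (initial datum) and $a<t$ (boundary influx $\Lambda$). Uniqueness of integrated solutions then identifies this representation with $\Phi$. Item 4 is immediate from \eqref{Eq:MassMajoration}: the ball of radius $2\overline c\,\overline c\Lambda/\underline\mu$ attracts every bounded set. Item 6 follows from a Gronwall estimate on the integrated formulation on $[0,t]$, using the uniform bound from item 2 and the Lipschitz constant $K_r$.

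The main obstacle is item 5 (asymptotic smoothness). The plan is to use the decomposition $\Phi^x_t=\widetilde{\Phi^x_t}+\widehat{\Phi^x_t}$. The first part satisfies $\|\widetilde{\Phi^x_t}(v)\|_{L^1}\le e^{-\mu_0 t}\|x_0\|_{L^1}$, so it tends to $0$ uniformly on bounded $B$. For the second part, together with the $y_i$ components, we show relative compactness in $L^1\times\R^2$ via the Fréchet--Kolmogorov criterion. The functions are bounded by $\Lambda$, with exponential tail decay $\Lambda e^{-\mu_0 a}$ uniformly. For equicontinuity of translations, we note that $a\mapsto\int_0^a(\mu_x+\sum\beta_i\Phi^i_{t-a+s})\d s$ is Lipschitz in $a$, uniformly in $t$ and $v\in B$. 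This uses $y_i'$ bounded, which holds on bounded sets by the $y$-equations, and it requires care with the jump at $a=t$, handled by the small $L^1$ contribution of an interval of length $h$. The $y$ components are bounded in $\R^2$. Hence a compact set attracts $B$, in the sense of a $\kappa$-contraction plus compact decomposition in the spirit of \cite{SmithThieme2011}.
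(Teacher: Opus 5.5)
Your proposal follows essentially the same route as the paper: Magal--Ruan semilinear theory with the shifted pair $(A-\gamma I, F+\gamma I)$ for positivity, a Gronwall mass bound for global existence and dissipativity, characteristics for \eqref{Eq:Duhamel}, the splitting $\widetilde{\Phi^x_t}+\widehat{\Phi^x_t}$ with Fr\'echet--Kolmogorov for asymptotic smoothness, and Gronwall on the integrated formulation for item 6. The one detail to pin down is the weighting in the mass estimate: your weights $1/c_i$ cancel the predation terms but only give the factor $\overline c/\underline c$ in front of $\|u_0\|_\X$, whereas the paper's $N(t)=\overline c\int_0^\infty x(t,a)\,\d a+y_1(t)+y_2(t)$ leaves the terms $(c_i-\overline c)\,y_i(t)\int_0^\infty\beta_i(a)x(t,a)\,\d a\le 0$ and yields exactly \eqref{Eq:MassMajoration} (also, your Lipschitz bound in $a$ is uniform in $v\in B$ but grows with $t$, which is harmless because compactness of $\widehat{\Phi^x_t}(B)$ is only needed for each fixed $t$).
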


\begin{proof}
\begin{enumerate}
    \item From Lemma \ref{Lemma:PropCauchy}, $A$ is a Hille-Yosida operator, so it generates a locally Lipschitz continuous integrated semigroup $\{S_A(t)\}_{t\geq 0}$ on $\X$ (\cite[Theorem 2.4]{KellermanHieber89} or \cite[Proposition 3.4.3, p.116]{MagalRuan2018}). Moreover, Kellermann-Hieber theorem (\cite{KellermanHieber89} or \cite[Theorem 3.6.2, p.133]{MagalRuan2018}) states that for any $\tau>0$ and any $f\in L^1((0,\tau),\X)$, the maps $t\longmapsto \left(S_A \ast f\right)(t)$ are continuously differentiable on $(0,\tau)$, where $\ast$ denotes the convolution product:
    $$(S_A \ast f)(t)=\int_0^t S_A(t-s)f(s)ds$$
    and for all $t\in[0,\tau]$
    \begin{equation}\label{Eq:convol-estim}
    \|u_f(t)\|:=\left\|\dfrac{d}{dt}(S_A \ast f)(t)\right\|\leq  \int_0^t e^{-\underline{\mu} (t-s)}\|f(s)\| \d s.  
    \end{equation}
    If in particular $f\in \Co((0,\tau),\X)$, then $\|u_f(t)\|\leq \delta_A(t)\underset{s\in[0,t]}{\sup}||f(s)||$ where $\delta_A(t)=t$ converges to $0$ as $t\to 0$ and Assumption 5.1.2 from \cite{MagalRuan2018} applies. Since we proved that $F$ is locally Lipschitz continuous, \cite[Theorem 5.2.7, p.226]{MagalRuan2018} states that there exists a unique maximal integrated  solution $U(.)u_0\in\Co([0,t_{\max}),\X_0)$ (with $t_{\max}\leq +\infty$) to  problem \eqref{Eq:modelCauchy} for each initial condition $u_0\in \X_0$. This solution satisfies:
    \begin{equation}\label{Eq:u_convol}
        U(t)u_0=T_{A_0}(t)u_0+\dfrac{d}{dt}\left(S_A \ast (F\circ U(.)u_0)(t)\right)
    \end{equation}
    where $\{T_{A_0}\}_{t\geq 0}$ is the $\Co_0$-semigroup on $\X_0$ generated by $A_0$. The existence of $\{T_{A_0}\}_{t\geq 0}$ on $\overline{D(A_0)}$ comes from the Hille-Yosida Theorem \cite[Lemma 2.4.5]{MagalRuan2018} while Lemma 2.2.11 from \cite{MagalRuan2018} implies that $\overline{D(A_0)}= \X_0$.

    For any $\gamma\leq 0$, the operator $A-\gamma I$ is resolvent positive, as a direct consequence of $A$ beeing resolvent positive. Moreover, for any $r>0$ and any $u_0\in \X_{0+}$ such that $\|u_0\|_\X \leq r$, $F(u_0)+\gamma_r u_0\in \X_{+}$, where $\gamma_r=2r(\|\beta_1\|_{L^\infty}+\|\beta_2\|_{L^\infty})$. Then 
    \cite[Theorem 5.3.2]{MagalRuan2018} states that the integrated solution is non-negative.

    \item From \eqref{Eq:mild}, the mild solution of \eqref{Eq:modelCauchy} is such that, for any $u_0=((0,x_0),y_{1,0},y_{2,0})\in\X_{0+}$ and any $t\in[0,t_{\max})$,
    \begin{flalign}
    x(t,.)&=x_0(.) -\int_0^t \partial_a x(s,.) \d s - \int_0^t \mu_x(.)x(s,.)\d s - \sum_{i=1}^2 y_i(s)\int_0^t \beta_i(.)x(s,.)\d s \label{Eq:IntegratedExpression_x}\\
    y_i(t)&=y_{i,0}-\mu_i\int_0^t y_i(s)\d s +c_i\int_0^t y_i(s)\int_0^\infty \beta_i(a)x(s,a)\d a \d s, \quad \forall i\in\{1,2\}.\label{Eq:IntegratedExpression_y}
    \end{flalign}
    
    Set $N(t)=\overline{c}\int_0^\infty x(t,a) \d a +\sum_{i=1}^2 y_i(t)$, then $N(0)=\overline{c}||x_0||_{L^1}+\sum_{i=1}^2 y_{i,0}$. Using the integrated expressions \eqref{Eq:IntegratedExpression_x}-\eqref{Eq:IntegratedExpression_y}:
    
    \begin{flalign*}
    N(t)=& N(0) + \overline{c}\Lambda t  - \int_0^t \overline{c}\int_0^\infty \mu_x(a) x(s,a) \d a \d s  - \sum_{i=1}^2\left( \mu_i \int_0^t y_i(s) \d s   +    (c_i-\overline{c})\int_0^t y_i(s)\int_0^\infty \beta_i(a)x(s,a)\d a\d s\right)
    \end{flalign*}    
     where we used the following equation since $\int_0^t x(s,.)\d s\in W^{1,1}(\R_+,\R)$
     $$\int_0^t\int_0^\infty \partial_a x(s,a) \d a \d s=-\int_0^t \Lambda \d s = -\Lambda t.$$ 
    Then
    $$N'(t)=\overline{c}\Lambda -  \overline{c}\int_0^\infty \mu_x(a) x(t,a) \d a - \sum_{i=1}^2 \left(\mu_i y_i(t)  +  (c_i-\overline{c}) \int_0^\infty y_i(t) \beta_i(a)\d a\right).$$
    By using the definition of $\underline{\mu}$ and truncating the $(c_i-\overline{c})$ negative contribution we obtain $N'(t)\leq \overline{c}\Lambda - \underline{\mu} N(t)$. It follows that
    $$||u(t)||_\X=\int_0^\infty x(t,a) \d a +\sum_{i=1}^2 y_i(t) \leq N(t)\leq N(0)e^{-\underline{\mu}t}+\frac{\overline{c}\Lambda }{\underline{\mu}}(1-e^{-\underline{\mu}t})$$
    by using Gronwall's lemma which leads to \eqref{Eq:MassMajoration} since $N(0)\leq \overline{c}||u_0||_\X$. In particular, $t\longmapsto ||u(t)||_\X$ is bounded on $[0,t_{\max}(u_0))$, then  $t_{\max} = +\infty$.

    \item Formula \eqref{Eq:Duhamel} are derived from the method of characteristics (or Duhamel's formula).
    \item From \eqref{Eq:MassMajoration}, for any bounded set $B\subset X_+$, $\underset{t\to +\infty}{\lim}||\Phi_t(v)||_\X \leq \frac{\overline{c}\Lambda}{\underline{\mu}}$ uniformly in $v\in B$, then the closed ball of $X_+$ centered at $0_{X}$ with radius $\frac{\overline{c}\Lambda}{\underline{\mu}}$ attracts all bounded sets of $X_+$. 
    \item We prove the asymptotic smoothness of $\Phi$ using Lemma 3.2.6 from \cite{Hale1988} (see also \cite[Theorem 2.46]{SmithThieme2011}). For any $t\in \R_+$, the semiflow $\Phi_t$ decomposes as $\Phi_t=(\widetilde{\Phi_t^x},0,0)+(\widehat{\Phi_t^x},\Phi_t^{1},\Phi_t^{2})$. 
    
    First, equation \eqref{Eq:Duhamel} implies that the function $\delta:R_+\times\R_+\ni (t,r) \longmapsto re^{-\mu_0t} \in\R_+$ is such that, for any $r>0$, $t\geq 0$ and $v\in X_+$ such that $||v||_X\leq r$ then $||(\widetilde{\Phi_t^x}(v),0,0)||_{X}\leq \delta(t,r)$ with $\underset{s\to +\infty}{\lim}\delta(s,r)=0$. 
    
    Next we prove that, for any fixed $t\geq 0$, $(\widehat{\Phi_t^x},\Phi_t^{1},\Phi_t^{2})$ is a compact operator on any bounded subset of $X_+$, that is for any bounded set $B\subset X_+$, $(\widehat{\Phi_t^x},\Phi_t^{1},\Phi_t^{2})(B)$ is a precompact set of $X_+$, \textit{i.e.} $\widehat{\Phi_t^x}(B)$ is a precompact set of $L^1(\R_+,\R_+)$. 
 
    It remains to prove the three assumptions of the Fréchet-Kolmogorov theorem \cite[Theorem X.1, p. 275]{Yosida80}. Let $r>0$ and $B\subset \B_{X_+}(0,r)$, the ball of $X_+$ centered at $0_{X}$ with radius $r$.
    \begin{description}
    \item[i)] The boundedness of $\widehat{\Phi_t^x}(B)$ is given by equation \eqref{Eq:MassMajoration}, which readily implies that for any $v\in B$ $$||\widehat{\Phi_t^x}(v)||_{L^1}\leq ||\Phi_t(v)||_{X_+}\leq \overline{c} \max\left(r,\frac{\Lambda}{\underline{\mu}}\right)=:M_r.$$
    
    \item[ii)] For any $v\in B$, the equitightness property $\underset{d\to +\infty}{\lim}\int_{a>d}\widehat{\Phi_t^x}(v)(a)\d a=0$ uniformly on $\widehat{\Phi_t^x}(B)$ is also readily satisfied because $\widehat{\Phi_t^x}(v)(a)=0$ for $a>t$ (independently of $v\in B$).
    
    \item[iii)] It remains to prove that for any $t\geq 0$, $\lim_{h\to 0} \int_\R |\widehat{\Phi_t^x}(v)(a+h)-\widehat{\Phi_t^x}(v)(a)|\d a=0$, uniformly in $v\in B$. 
    
    From \eqref{Eq:Duhamel}, for any $v\in B$ and any $t,a\geq 0$, $\widehat{\Phi_t^x}(v)(a)=\Lambda e^{A_{t,v}(a)}\chi_{[0,t)}(a)$, where 
    $$A_{t,v}:a\longmapsto-\int_{0}^a \left( \mu_x(s)+\sum_{i=1}^2\beta_i(s)\Phi_{t-a+s}^{i}(v)\right) \d s.$$
    If $t=0$, the  integral is readily zero. Let $v\in B$, $t> 0$ and $h\in(0,t]$. Since $\widehat{\Phi_t^x}(v)(a+h)=0$ if $a>t-h$ and $\widehat{\Phi_t^x}(v)(a)=0$ if $a>t$, then
    \begin{flalign*}
    &\int_\R  |\widehat{\Phi_t^x}(v)(a+h)-\widehat{\Phi_t^x}(v)(a)|\d a \\
    &= \int_{-h}^0 \widehat{\Phi_t^x}(v)(a+h)da + \int_0^{t-h} |\widehat{\Phi_t^x}(v)(a+h)-\widehat{\Phi_t^x}(v)(a)|\d a+ \int_{t-h}^t |\widehat{\Phi_t^x}(v)(a)|\d a
    \end{flalign*}
    The first and third terms are clearly bounded by $\Lambda h$. The second term is controlled by using the mean value inequality $|e^{-a}-e^{-b}|\leq e^{-\min(a,b)} |a-b|\leq |a-b|$ for any $a,b>0$:
    \begin{flalign*}
    &|\widehat{\Phi_t^x}(v)(a+h)-\widehat{\Phi_t^x}(v)(a)|\leq \Lambda |A_{t,v}(a+h)-A_{t,v}(a)|\\
    &\leq \Lambda \int_a^{a+h} \left(\mu_x(s)+\sum_{i=1}^2\beta_i(s)\Phi^{i}_{t-a+s}(v)\right)\d s +  \Lambda \sum_{i=1}^2\int_0^a \beta_i(s)\left|\Phi^{i}_{t-a-h+s}(v)-\Phi^{i}_{t-a+s}(v)\right|   \d s.
    \end{flalign*}
    From \eqref{Eq:MassMajoration} and the boundedness of $B$, there exist constants $C_0,C_1,C_2>0$ such that   $$\int_a^{a+h}\left(\mu_x(s)+\sum_{i=1}^2\beta_i(s)\Phi^{i}_{t-a+s}(v)\right)\d s \leq C_1 h $$
    and, using the integrated expression  \eqref{Eq:IntegratedExpression_y},
    \begin{flalign*}
    &\beta_i(s)|\Phi^{i}_{t-a-h+s}(v)-\Phi^{i}_{t-a+s}(v)|=\beta_i(s)\left|c_i\int_{t-a-h+s}^{t-a+s}\Phi^{i}_{\sigma}(v)\left( \int_0^\infty \beta_i(\alpha)\Phi^x_{\sigma}(v)(\alpha)\d \alpha -\mu_i\right)\d \sigma\right|\\
    &\leq ||\beta_i||_{L^\infty} c_i M_r (||\beta_i||_{L^\infty} M_r +\mu_i)h\leq C_i  h.
    \end{flalign*}
    
    Finally, $$\int_0^{t-h} |\widehat{\Phi_t^x}(v)(a+h)-\widehat{\Phi_t^x}(v)(a)|\d a\leq (t-h)\Lambda(\sum_{i=0}^2 C_i)h$$

    and then    
    $$ \int_0^\infty |\widehat{\Phi_t^x}(v)(a+h)-\widehat{\Phi_t^x}(v)(a)|\d a\leq \Lambda\left(2+(t-h)(\sum_{i=0}^2C_i)\right)h$$
    converges to $0$ as $h\to 0$, uniformly on $v\in B$.
    \end{description}

    Consequently, the Fréchet-Kolmogorov theorem states that $\widehat{\Phi_t^x}(B)$ is a precompact set of $L^1(\R_+,\R_+)$, then the set $(\widehat{\Phi_t^x}, \Phi_t^{1},\Phi_t^{2})(B)$ is precompact in $X_+$ and this achieves the proof.
    \item Let $u_0\in \X_{0+}$, $t\geq 0$, $\ep >0$ and set
    $$r=\overline{c}\max\left(\dfrac{\Lambda}{\underline{\mu}},\|u_0\|_{\X}+\ep \right), \qquad \delta=\ep  e^{-K_r t}$$ 
    where $K_r$ is the Lipschitz constant from Lemma \ref{Lemma:PropCauchy}. Let $\tilde{u}_0\in \X_{0+}$ such that $\|u_0-\tilde{u}_0\|_{\X}\leq \delta$. 
    Since $U(.)u_0$, $U(.)\tilde{u}_0$ and $F$ are continuous, then $F\circ U(.)u_0-F\circ U(.)\tilde{u}_0$ belong to $\in \Co(\R_+,\X)\subset L^1([0,t],\X)$. Moreover, since $\|\tilde{u}_0\|_{\X}\leq \delta+\|u_0\|_{\X}\leq r$, we know from \eqref{Eq:MassMajoration} that for every $s\in[0,t]$, $(U(s)u_0,U(s)\tilde{u}_0)\in (\X_{0+}\cap B_{\X}(0,r))^2$. From \eqref{Eq:u_convol}, it comes for any $s\in[0,t)$:
    $$U(s)\tilde{u}_0-U(s)u_0=T_{A_0}(s)(\tilde{u}_0-u_0)+\dfrac{d}{dt}\left(S_A \ast ((F\circ U(.)\tilde{u}_0)-(F\circ U(.)u_0))(s)\right).$$
    Then the Hille-Yosida Theorem applied to $A_0$ (notice that $\rho(A_0)=\rho(A)$ by \cite[Lemma 2.2.10]{MagalRuan2018}) states that 
    $$\|T_{A_0}(s)(\tilde{u}_0-u_0)\|_{\X}\leq e^{-\underline{\mu} s}\|\tilde{u}_0-u_0\|_{\X}\leq \delta, \ \forall s\in[0,t].$$
    Finally, equation \eqref{Eq:convol-estim} with $f=F\circ U(.)u_0-F\circ U(.)\tilde{u}_0$ rewrites as
    $$\|U(s)\tilde{u}_0-U(s)u_0\|_{\X}\leq \delta+\int_0^s \|F(U(\xi)\tilde{u}_0)-F(U(\xi)u_0)\|_{\X}d\xi \leq \delta+K_r\int_0^s \|U(\xi)\tilde{u}_0-U(\xi)u_0\|_{\X}d\xi, \quad \forall s\in[0,t]$$
    which leads by means of Gronwall's inequality to
    \begin{equation*}
        \|U(s)\tilde{u}_0-U(s)u_0\|_{\X}\leq \delta e^{K_r s} \leq \ep , \quad \forall s\in [0,t],
    \end{equation*}
    which is the expected bound, where $v$ and $\tilde{v}$ are obtained from $u_0$ and $\tilde{u}_0$ by removing the first coordinate.
\end{enumerate}
\end{proof}
 
\begin{corollary}There exists a unique compact attractor of bounded set $\A\subset X_+$, that is $\A$ is a non-empty, compact and invariant subset that attracts every bounded subsets of $X_+$. This attractor satisfies $\A\subset B_{X_+}(0,\frac{\overline{c}\Lambda}{\mu_0})$. Moreover, the set $\A$ is locally asymptotically stable (LAS), \textit{i.e.} for every neighborhood $\V$ of $\A$, there exists a neighborhood $\W\subset \V$ of $\A$ such that $\Phi_t(\W)\subset \V$ for
every $t\geq 0$.
\label{Cor:attractor}  \end{corollary}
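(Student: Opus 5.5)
The corollary will follow from the abstract existence theorem for global attractors of bounded sets, in the form of \cite[Theorem 2.33]{SmithThieme2011} (equivalently \cite[Theorem 3.4.6]{Hale1988}). That result says: if a continuous semiflow on a closed subset of a Banach space is point-dissipative (or bounded-dissipative), asymptotically smooth, and eventually bounded on bounded sets, then it has a compact attractor of bounded sets. Such an attractor is unique, and it is locally asymptotically stable.

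First, $X_+$ is a closed subset of the Banach space $L^1(\R_+,\R)\times\R^2$, and $\Phi$ is a continuous semiflow on it by Proposition \ref{Prop:solutions}(3) and (6). Bounded dissipativity is item (4). Item (5) gives asymptotic smoothness. Eventual boundedness on bounded sets, that is, boundedness of $\bigcup_{t\ge 0}\Phi_t(B)$ for bounded $B$, follows directly from the second inequality in \eqref{Eq:MassMajoration}.

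With these hypotheses verified, the theorem gives a nonempty, compact, invariant set $\A$ attracting every bounded set. Uniqueness is standard. If $\A'$ is a second such attractor, then $\A'$ is compact, hence bounded, so $\A$ attracts $\A'$. By invariance, $\A'=\Phi_t(\A')$ for all $t$, so $d(\A',\A)=0$. Since $\A$ is closed, $\A'\subset\A$, and by symmetry $\A'=\A$.

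The localisation $\A\subset B_{X_+}(0,\overline{c}\Lambda/\mu_0)$ comes from invariance combined with \eqref{Eq:MassMajoration}. Take $w\in\A$. Because $\A$ is invariant, for every $t$ we can write $w=\Phi_t(v_t)$ with $v_t\in\A$, and $\|v_t\|\le R$, where $R$ bounds the compact set $\A$. Then
$$\|w\|\le \overline{c}Re^{-\underline{\mu}t}+\overline{c}\Lambda/\underline{\mu}.$$
Letting $t\to\infty$ gives $\|w\|\le \overline{c}\Lambda/\underline{\mu}$.

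This radius is $\overline{c}\Lambda/\underline{\mu}$, whereas the statement has $\mu_0$. Since $\underline{\mu}\le\mu_0$, the bound with $\mu_0$ is sharper than what this argument gives. I expect this to be the main obstacle. I would first try to read the statement's radius as $\overline{c}\Lambda/\underline{\mu}$, consistent with item (4). Failing that, I would refine the Lyapunov-type estimate for $N$, but I do not expect a general improvement to $\mu_0$ without extra assumptions.

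Local asymptotic stability of $\A$ is part of the cited theorem, which relies on asymptotic smoothness and continuity, uniform in finite time (item (6)). If it must be argued directly, I would argue by contradiction. Suppose there are $v_n\to\A$ and times $t_n$ with $\Phi_{t_n}(v_n)\notin\V$. The set $\{v_n\}$ is bounded, and $\A$ attracts it. If $(t_n)$ is bounded, item (6) together with compactness of $\A$ and its invariance gives a contradiction. If $t_n\to\infty$, attraction of $\{v_n\}$ by $\A$ forces $\Phi_{t_n}(v_n)$ into $\V$ for large $n$, again a contradiction.
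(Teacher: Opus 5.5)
Your proof is correct and follows the paper's route. The paper invokes \cite[Theorem 2.33]{SmithThieme2011} from bounded dissipativity and asymptotic smoothness, then \cite[Theorem 2.39]{SmithThieme2011} with item (6) (continuity uniform in finite time) for local asymptotic stability. Your uniqueness argument and your direct contradiction argument for stability are sound unpackings of those citations.

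The paper's proof says nothing about the radius, and your suspicion is justified. The radius $\overline{c}\Lambda/\underline{\mu}$ that your invariance argument gives, which is also the absorbing-ball radius in item (4), is the correct one. The radius $\overline{c}\Lambda/\mu_0$ is false in general. Take $\mu_x\equiv\mu_0=1$, $\Lambda=c_1=c_2=1$, $\beta_1\equiv 1$, $\mu_1=1/10$, and any $\beta_2,\mu_2$. Then $\RR_{0,1}=10$, and \eqref{Eq:Def_y^*} reads $10/(1+y_1^*)=1$, so $y_1^*=9$. Since $\{E_1\}$ is compact and invariant, $\A$ attracts it, so $E_1\in\A$. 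Yet $\|E_1\|_{X}\geq 9>1=\overline{c}\Lambda/\mu_0$. The statement should therefore be read with $\underline{\mu}$ in place of $\mu_0$, and you need not attempt the sharper bound.
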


\begin{proof}
The existence of a unique compact attractor $\A$ is given by \cite[Theorem 2.33]{SmithThieme2011} (or \cite[Theorem 3.4.6]{Hale1988}) as a consequence of $\Phi$ being asymptotically smooth and bounded-dissipative. Since $\Phi$ is also state-continuous, uniformly in finite time, \cite[Theorem 2.39]{SmithThieme2011} states that $\A$ is locally asymptotically stable.
\end{proof}
For each $v\in X_+$, we denote by $\OO_v=\{\Phi_t(v),t\geq 0\}$ the positive orbit starting from $v$ and 
$$\omega(v)=\bigcap_{\tau \geq 0}\overline{\{\Phi_t(v),t\geq \tau\}}$$
the $\omega$-limit set of $v$. 

\begin{lemma}[Properties of omega-limit set]\label{Lemma:omega-limit}
For every $v\in X_+$,
\begin{itemize}
    \item $\omega(v)$ is non-empty, compact and connected,
    \item $\omega(v)$ is invariant under $\Phi_t$,
    \item $\omega(v)$ attracts $v$, \textit{i.e.} $\lim_{t\to\infty}d(\Phi_t(v),\omega(v))=0$.
\end{itemize}
\end{lemma}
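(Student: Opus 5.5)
The plan is to reduce the statement to the standard result on omega-limit sets of asymptotically smooth semiflows with precompact orbits, e.g. \cite[Theorem 2.46 / Lemma 2.47]{SmithThieme2011} or \cite[Lemma 3.2.1]{Hale1988}. All three properties follow once we know that the positive orbit $\OO_v$ is bounded and that the trajectory $\{\Phi_t(v)\}$ is asymptotically compact: every sequence $\Phi_{t_n}(v)$ with $t_n\to\infty$ has a convergent subsequence. So the core step is to establish this asymptotic compactness from Proposition~\ref{Prop:solutions}.

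\emph{Step 1 (bounded orbit).} By \eqref{Eq:MassMajoration}, $\OO_v\subset B:=\B_{X_+}(0,r)$ with $r=\overline{c}\max(\|v\|_X,\overline{c}\Lambda/\underline{\mu})$.

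\emph{Step 2 (positively invariant bounded set).} Replace $B$ by the forward orbit closure $B'=\overline{\OO_v}$. It is bounded, and it is positively invariant because each $\Phi_t$ is continuous.

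\emph{Step 3 (asymptotic compactness).} The asymptotic smoothness in Proposition~\ref{Prop:solutions} gives a compact $K\subset B'$ attracting $B'$, hence $d(\Phi_t(v),K)\to0$. For $t_n\to\infty$, pick $k_n\in K$ with $\|\Phi_{t_n}(v)-k_n\|\to0$. Extract a subsequence $k_n\to k$; then $\Phi_{t_n}(v)\to k$. Equivalently, one can argue directly with the splitting $\Phi_t=(\widetilde{\Phi^x_t},0,0)+(\widehat{\Phi^x_t},\Phi^1_t,\Phi^2_t)$ used there, where the first part decays like $re^{-\mu_0 t}$ and the second is compact.

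\emph{Step 4 (the three properties).}
\begin{enumerate}
\item \emph{Non-emptiness and compactness.} The limit $k$ obtained in Step 3 lies in $\omega(v)$, so $\omega(v)\neq\emptyset$. Moreover $\omega(v)$ is closed and contained in the compact set $K$, hence compact.
\item \emph{Attraction.} Argue by contradiction: if there were $\ep>0$ and $t_n\to\infty$ with $d(\Phi_{t_n}(v),\omega(v))\geq\ep$, then a convergent subsequence would have its limit in $\omega(v)$, which is absurd.
\item \emph{Connectedness.} This follows from attraction together with continuity of $t\mapsto\Phi_t(v)$. If $\omega(v)=C_1\cup C_2$ with disjoint nonempty compact sets at distance $3\eta$, the trajectory would have to cross the region at distance $\geq\eta$ from both sets infinitely often. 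Asymptotic compactness then yields a limit point outside $C_1\cup C_2$, a contradiction.
\item \emph{Invariance.} Take $w=\lim\Phi_{t_n}(v)$.
\begin{itemize}
\item \emph{Forward.} $\Phi_t(w)=\lim\Phi_{t+t_n}(v)\in\omega(v)$ by continuity of $\Phi_t$.
\item \emph{Backward.} Consider $\Phi_{t_n-t}(v)$ for $t_n>t$. Extract a convergent subsequence with limit $w'\in\omega(v)$; continuity of $\Phi_t$ gives $\Phi_t(w')=w$. Hence $\Phi_t(\omega(v))=\omega(v)$.
\end{itemize}
\end{enumerate}

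The only genuinely nontrivial ingredient is the asymptotic compactness of a single orbit in the infinite-dimensional space $L^1$. This is where the Fréchet--Kolmogorov argument of Proposition~\ref{Prop:solutions} and the exponential decay of $\widetilde{\Phi^x_t}$ are used. The rest is the classical topological argument, which I would cite rather than reprove.
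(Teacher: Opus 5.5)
Your proposal is correct and follows essentially the paper's route. The paper also obtains compactness of the orbit from the splitting into a decaying part plus a compact part (citing Webb's Proposition 3.13 to get $\overline{\OO_v}$ compact) and then cites Hale's Lemmas 3.1.1--3.1.2. You instead apply the already-established asymptotic smoothness to the bounded, closed, positively invariant set $\overline{\OO_v}$, and you write out the standard topological arguments yourself rather than citing them.
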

\begin{proof}
Under the hypotheses already checked to prove the asymptotic smoothness of the semiflow, \cite[Proposition 3.13, p.100]{Webb85} states that for every $v\in X_+$, the orbit $\OO_v\subset X_+$ is relatively compact ($\overline{\OO_v}$ is compact). Then \cite[Lemma 3.1.1 and 3.1.2, p. 36]{Hale1988} (or  \cite[Thm 4.1, p. 167]{Walker1980}) give the conclusions of the Lemma.
\end{proof}

\section{Main results on asymptotic dynamics}

In this section, we identify the equilibria of \eqref{Eq:model} in $X_+$. In particular, we interpret the conditions for existence of positive coexistence equilibria (i.e. in $X_1\cap X_2$) from a biological point of view. Next, we state the main results of this paper, which give en exhaustive characterisation of the attractivity and stability of equilibria, their proofs are deferred to Section \ref{Sec:GlobalAnalysis}.

\subsection{Existence of equilibria}
\label{Sec:equilibria}
\begin{proposition}[Equilibria in the boundary $\partial X_1 \cup \partial X_2$]\label{prop:boundary_equilibria}
\ 
\begin{enumerate}
    \item There exists a trivial equilibrium $E_0$, with no competitor, defined by
$$E_0=(x^*_0,0,0)=(\Lambda e^{-\int_0^\cdot \mu_x(s)ds},0,0).$$
\item When they exist, the equilibria with a single competitor are denoted by $E_1=(x^*_1,y^*_{1},0)\in X_1\cap \partial X_2$ and $E_2=(x^*_2,0,y^*_{2})\in X_2\cap \partial X_1$. For each $i,j\in\{1,2\}$ with $i\neq j$, $E_i$ exists if and only if $\RR_{0,i}>1$, where $\RR_{0,i}$ is the  basic reproduction number
\begin{equation}\label{Eq:DefR0i} \RR_{0,i}=\frac{\Lambda c_i}{\mu_i}\int_0^\infty \beta_i(a) e^{-\int_0^a \mu_x(s)ds}da=\frac{c_i}{\mu_i}\int_0^\infty \beta_i(a) x_0^*(a)da,\quad \forall i\in\{1,2\}.\end{equation}

In this case we have 
$$x_i^*(a)=\Lambda e^{-\int_0^a \mu_x(s)ds}e^{-y^*_{i}\int_0^a \beta_i(s)ds}$$
where $y_{i}^* > 0$ is implicitly defined by
\begin{equation} \frac{c_i}{\mu_i}\int_0^\infty \beta_i(a) x_i^*(a)\mathrm{d} a=1\label{Eq:Def_y^*}.
\end{equation}
\end{enumerate}
\end{proposition}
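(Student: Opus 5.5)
We look for stationary solutions $(x,y_1,y_2)\in X_+$ of \eqref{Eq:model} that lie in the boundary $\partial X_1\cup\partial X_2$. The argument reduces to a scalar equation in $y_i$, solved by monotonicity and the intermediate value theorem.

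\textbf{Step 1: reduction to a linear ODE in $a$.} For fixed constants $y_1,y_2\geq 0$, a stationary $x$ satisfies
\[
x'(a)=-\bigl(\mu_x(a)+\beta_1(a)y_1+\beta_2(a)y_2\bigr)x(a),\qquad x(0)=\Lambda .
\]
This is understood in the $W^{1,1}$ sense, which is the meaning of $D(A)$ and of the fixed points of the semiflow; equivalently one can set $t$-independent data in \eqref{Eq:Duhamel}. The unique solution is
\[
x(a)=\Lambda e^{-\int_0^a \mu_x(s)\,\d s}\, e^{-\int_0^a (\beta_1 y_1+\beta_2 y_2)(s)\,\d s}.
\]
It lies in $L^1(\R_+,\R_+)$ because $\mu_x\geq\mu_0>0$.

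\textbf{Step 2: the two cases.} The stationary $y_i$-equation reads $y_i\bigl(c_i\int_0^\infty\beta_i x\,\d a-\mu_i\bigr)=0$.
\begin{itemize}
\item If $y_1=y_2=0$, Step 1 gives $x=x_0^*$, which is $E_0$.
\item If $y_i>0$ and $y_j=0$, we need $\frac{c_i}{\mu_i}\int_0^\infty\beta_i x_i\,\d a=1$ with $x_i$ as displayed in the statement. This is exactly \eqref{Eq:Def_y^*}.
\end{itemize}

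\textbf{Step 3: solving \eqref{Eq:Def_y^*}.} Define
\[
g(y)=\frac{c_i}{\mu_i}\int_0^\infty \beta_i(a)x_0^*(a)e^{-y\int_0^a\beta_i}\,\d a ,\qquad y\geq 0 .
\]
\begin{itemize}
\item Continuity: the integrand is dominated by $\|\beta_i\|_{L^\infty}x_0^*\in L^1$, so dominated convergence applies.
\item Initial value: $g(0)=\RR_{0,i}$.
\item Monotonicity: $g$ is nonincreasing, and strictly decreasing when $\RR_{0,i}>0$. Indeed, on the set where $\beta_i x_0^*>0$ we have $\int_0^a\beta_i>0$ for a.e. such $a$, since that set has positive measure and $\int_0^a\beta_i$ vanishes only on an initial interval where $\beta_i=0$ a.e.
\item Limit: $g(y)\to 0$ as $y\to\infty$, by dominated convergence, because $e^{-y\int_0^a\beta_i}\to0$ wherever $\beta_i(a)x_0^*(a)>0$ (up to a null set).
\end{itemize}
Hence a root $y^*_i>0$ of $g(y)=1$ exists if and only if $g(0)=\RR_{0,i}>1$, and it is then unique by strict monotonicity. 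If $\RR_{0,i}\leq1$, then $g(y)<g(0)\leq 1$ for every $y>0$ (if $\RR_{0,i}=0$, $g\equiv0$), so no $E_i$ exists.

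\textbf{Main obstacle.} The only delicate point is the strict monotonicity and the limit in Step 3. Both require the measure-theoretic remark that $\int_0^a\beta_i>0$ for almost every $a$ in the support of $\beta_i$.
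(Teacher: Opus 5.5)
Your proof is correct and follows the same route as the paper: you integrate the stationary age equation to obtain $x_i^*$ and reduce the problem to the scalar condition \eqref{Eq:Def_y^*}. The paper's one-line proof leaves the claims ``$E_i$ exists if and only if $\RR_{0,i}>1$'' and uniqueness of $y_i^*$ implicit, and you supply that missing step correctly via continuity, strict monotonicity and the intermediate value theorem, including the point that $\int_0^a\beta_i>0$ for a.e.\ $a$ in $\{\beta_i>0\}$.
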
  

\begin{proof}
Let $i\in\{1,2\}$, then $E_i$ satisfies
$$(x_i^*)'(a)=-\mu_x x_i^*(a)-x_i^*(a)\beta_i(a)y_i^*$$
which integrates as stated in the proposition.
\end{proof}

 The search of coexistence equilibria in $\X_1\cap X_2$, that is equilibria for which all components are positive, is more complicated. Indeed, a point $(\overline{x},\overline{y_1},\overline{y_2})\in X_+$ with $\overline{y_1}>0, \overline{y_2}>0$ is an equilibrium if and only if it satisfies the following system:
\begin{equation}
\left\{
\begin{array}{llll}
T_k(\overline{y_1},\overline{y_2})&=&1,& \forall k\in\{1,2\} \\
\overline{x}(a)&=&\Lambda e^{-\int_0^a \mu_x(s)\d s}e^{-\sum_{i=1}^2 \overline{y}_i\int_0^a \beta_i(s)\d s},& \forall a\geq 0
\end{array}
\right.\label{Eq:CNS_positive_eq}
\end{equation}
where, for each $k\in\{1,2\}$,
\begin{equation*}
T_k(y_1,y_2)=\frac{\Lambda c_k}{\mu_k}\int_0^\infty \beta_k(a)e^{-\int_0^a \mu_x(s)\d s}e^{-\sum_{i=1}^2y_i\int_0^a \beta_i(s)\d s}\d a.
\end{equation*}
Clearly $T_1< \RR_{0,1}$ and $T_2< \RR_{0,2}$ on $(\R_+^*)^2$, then it follows from the first condition in \eqref{Eq:CNS_positive_eq} that there is no positive equilibrium if $\RR_{0,1}\leq 1$ or $\RR_{0,2}\leq 1$. To investigate the possibility of equilibrium if $\RR_{0,1}>1$ and $\RR_{0,2}>1$, observe that $T_1(0,0)=\RR_{0,1}$ and  $T_1(y_{1}^*,0)=1$  while $T_2(0,0)=\RR_{0,2}$ and $T_2(0,y_{2}^*)= 1$. We then define
\begin{equation}
    \begin{array}{l}
        \gamma_{2\to 1}= T_1(0,y_{2}^*)=\frac{\Lambda c_1}{\mu_1}\int_0^\infty \beta_1(a) e^{-\int_0^a \mu_x(s)\d s}e^{-y_{2}^*\int_0^a \beta_2(s)\d s}\d a=\frac{c_1}{\mu_1}\int_0^\infty \beta_1(a) x_2^*(a) \d a,\\
        \gamma_{1\to 2}= T_2(y_{1}^*,0)=\frac{\Lambda c_2}{\mu_2}\int_0^\infty \beta_2(a) e^{-\int_0^a \mu_x(s)\d s}e^{-y_{1}^*\int_0^a \beta_1(s)\d s}\d a=\frac{c_2}{\mu_2}\int_0^\infty \beta_2(a) x_1^*(a) \d a.
    \end{array}\label{Eq:Def_gamma}
\end{equation}

\begin{remark}\label{Remark: Comparison_gamma_R}
Let $(i,j)\in\{1,2\}^2$, $i\neq j$, then $\gamma_{i\to j}$ is defined if and only if $\RR_{0,i}>1$. Moreover, $\gamma_{i\to j} \leq \RR_{0,j}$, with strict inequality if and only if $\|\beta_j\|_{L^1}> 0$. 
\end{remark}

Those quantities are known as \textit{invasion fitnesses} or \textit{invasion reproduction numbers} and can be explained as in adaptive dynamics where we consider that populations are at equilibrium $E_1$ (respectively $E_2$) and we want to know if newly introduced competitor $y_2$ (resp. $y_1$) can invade.

\begin{lemma}\label{Lemma: CNS_eq_inf}
Assume that $\RR_{0,1}>1$ and $\RR_{0,2}>1$, then the following assumptions are equivalent: \begin{enumerate}
    \item $\gamma_{1\to 2}=\gamma_{2\to 1}=1$.
    \item $y_{2}^*\beta_2(a)=y_{1}^*\beta_1(a)$ almost everywhere $a\geq 0$.
    \item $c_2\mu_1 \beta_2(a)=c_1\mu_2\beta_1(a)$ almost everywhere $a\geq 0$.
\end{enumerate}
\end{lemma}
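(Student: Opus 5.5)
We prove the cycle of implications $2\Rightarrow 1$, $1\Rightarrow 2$, and $2\Leftrightarrow 3$. Throughout we use $B_i(a)=\int_0^a\beta_i(s)\,\d s$, and the positivity $x_0^*(a)=\Lambda e^{-\int_0^a\mu_x}>0$.

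\emph{Step $2\Rightarrow 1$.} Integrating item 2 gives $y_2^*B_2=y_1^*B_1$ on $\R_+$, so $x_1^*=x_2^*$ by the explicit formula of Proposition \ref{prop:boundary_equilibria}. Then, by \eqref{Eq:Def_gamma} and \eqref{Eq:Def_y^*} applied with $i=1$,
$$\gamma_{2\to1}=\tfrac{c_1}{\mu_1}\int_0^\infty\beta_1x_2^*=\tfrac{c_1}{\mu_1}\int_0^\infty\beta_1x_1^*=1,$$
and symmetrically $\gamma_{1\to2}=1$.

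\emph{Step $1\Rightarrow 2$.} This is the core. Set $\phi=y_1^*B_1-y_2^*B_2$, which is absolutely continuous with $\phi(0)=0$ and $\phi'=y_1^*\beta_1-y_2^*\beta_2$. We have $x_1^*=x_2^*e^{-\phi}$. I would combine the four identities
$$\tfrac{c_1}{\mu_1}\int\beta_1x_1^*=\tfrac{c_1}{\mu_1}\int\beta_1x_2^*=1,\qquad \tfrac{c_2}{\mu_2}\int\beta_2x_2^*=\tfrac{c_2}{\mu_2}\int\beta_2x_1^*=1$$
into a single sign-definite integral. Multiplying the $\beta_1$ identities by $y_1^*\mu_1/c_1$ and the $\beta_2$ identities by $y_2^*\mu_2/c_2$, and subtracting, gives
$$\int_0^\infty \phi'(a)\bigl(x_1^*(a)-x_2^*(a)\bigr)\,\d a=0,$$
that is,
$$\int_0^\infty \phi'(a)\,x_2^*(a)\bigl(e^{-\phi(a)}-1\bigr)\,\d a=0.$$
The difficulty is that this integrand is not pointwise signed, because of the factor $x_2^*$. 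I would handle it by integration by parts. Write $\phi'(e^{-\phi}-1)=-\frac{d}{da}G(\phi)$ with $G(u)=e^{-u}-1+u\ge0$ and $G(0)=0$. Then
$$0=-\int_0^\infty x_2^*\,\frac{d}{da}G(\phi)\,\d a=\int_0^\infty (x_2^*)'\,G(\phi)\,\d a,$$
where the boundary terms vanish because $G(\phi(0))=0$, and at infinity because $x_2^*$ decays at least like $e^{-\mu_0 a}$ while $\phi$ grows at most linearly (both $\beta_i$ are bounded). Since
$$(x_2^*)'=-(\mu_x+y_2^*\beta_2)x_2^*\le-\mu_0x_2^*<0,$$
we get $\int x_2^*G(\phi)=0$, hence $G(\phi)\equiv0$, so $\phi\equiv0$ and therefore $\phi'=0$ a.e., which is item 2. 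The key points to verify are the justification of the integration by parts for the absolutely continuous $x_2^*$, and the decay at infinity.

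\emph{Step $2\Leftrightarrow3$.} If item 2 holds, then $x_1^*=x_2^*$. The equilibrium conditions \eqref{Eq:Def_y^*} give $\frac{\mu_i}{c_i}=\int\beta_ix_1^*$, so
$$\frac{\mu_2}{c_2}=\int\beta_2x_1^*=\frac{y_1^*}{y_2^*}\int\beta_1x_1^*=\frac{y_1^*}{y_2^*}\,\frac{\mu_1}{c_1}.$$
Hence $y_1^*/y_2^*=c_1\mu_2/(c_2\mu_1)$, and substituting into item 2 yields item 3.

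Conversely, assume item 3 and set $\kappa=c_1\mu_2/(c_2\mu_1)$, so that $\beta_2=\kappa\beta_1$. Define $\tilde y_2=y_1^*/\kappa$. Then $\tilde y_2B_2=y_1^*B_1$, and
$$\frac{c_2}{\mu_2}\int\beta_2\,\Lambda e^{-\int\mu_x}e^{-\tilde y_2B_2}=\frac{c_2\kappa}{\mu_2}\int\beta_1x_1^*=\frac{c_1}{\mu_1}\int\beta_1x_1^*=1.$$
The map $y\mapsto\frac{c_2}{\mu_2}\int\beta_2\Lambda e^{-\int\mu_x}e^{-yB_2}$ is strictly decreasing, since $\|\beta_2\|_{L^1}>0$ follows from $\RR_{0,2}>1$. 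So the solution of \eqref{Eq:Def_y^*} is unique, giving $\tilde y_2=y_2^*$, which is item 2.

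The main obstacle is the sign argument in $1\Rightarrow2$. If the integration by parts with $G$ fails, the fallback is a monotonicity argument on $T_1,T_2$ along the segment joining $(y_1^*,0)$ and $(0,y_2^*)$.
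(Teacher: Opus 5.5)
Your proof is correct, and for the key implication $1\Rightarrow 2$ it takes a genuinely different route from the paper.

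The paper argues dynamically. It runs the semiflow from the smooth datum $x_0(a)=\Lambda e^{-a}$ and uses \eqref{Eq:L1prime} for $L_1$, together with its analogue for $L_2$. When $\gamma_{1\to 2}=\gamma_{2\to 1}=1$ these reduce to pure dissipation terms, from which the paper concludes that $\Phi_t^x(v)$ converges in $L^1$ to both $x_1^*$ and $x_2^*$, hence $x_1^*=x_2^*$.

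Your argument is static. You combine the four equilibrium identities and integrate by parts to get $\int_0^\infty(\mu_x+y_2^*\beta_2)x_2^*\,G(\phi)\,\d a=0$. Since $g(x_1^*/x_2^*)=g(e^{-\phi})=G(\phi)$, this is exactly the dissipation term of $\frac{\d L_2}{\d t}$ evaluated at $x=x_1^*$. In effect you use the paper's Lyapunov identity at the equilibrium $E_1$ rather than along a trajectory.

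What this buys:
\begin{itemize}
\item a self-contained proof that needs neither the well-posedness and regularity theory nor the step ``$L_1(\Phi_t(v))$ is monotone and converges, so its derivative tends to $0$'', which in the paper tacitly needs some uniform continuity of the derivative;
\item $\phi\equiv 0$ directly, hence $x_1^*=x_2^*$ everywhere.
\end{itemize}
The paper's route instead reuses machinery it needs anyway for the global analysis.

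Your remaining implications match the paper's computations. Your $2\Rightarrow 3$ coincides with the paper's $(2)\Rightarrow(3)$. Your $3\Rightarrow 2$ makes explicit the uniqueness of the root of \eqref{Eq:Def_y^*} (strict monotonicity, since $\|\beta_2\|_{L^1}>0$). The paper's ``straightforward'' $(3)\Rightarrow(1)$ uses that uniqueness tacitly.

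One justification needs repair: the boundary term at infinity does not vanish for the reason you give. Because $G(u)\sim e^{-u}$ as $u\to-\infty$, $G(\phi(a))$ can grow like $e^{y_2^*\|\beta_2\|_{L^\infty}a}$. The bound $x_2^*\le\Lambda e^{-\mu_0 a}$ together with linear growth of $\phi$ does not control this.

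The fix is the identity $x_2^*e^{-\phi}=x_1^*$, which gives
$x_2^*G(\phi)=x_1^*-x_2^*+\phi\,x_2^*=O\bigl((1+a)e^{-\mu_0 a}\bigr)$,
using $x_i^*\le\Lambda e^{-\mu_0 a}$ and $|\phi(a)|\le(y_1^*\|\beta_1\|_{L^\infty}+y_2^*\|\beta_2\|_{L^\infty})a$. This kills the boundary term. It also gives the integrability of $(x_2^*)'G(\phi)=-(\mu_x+y_2^*\beta_2)x_2^*G(\phi)$, which you need to let the upper limit of the integration by parts on $[0,R]$ go to infinity.
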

The proof of this Lemma will be handled in Section \ref{SectionProof_Lemma: CNS_eq_inf}.  

\begin{proposition}[Positive equilibria]\label{Prop:Positive_equilibria}
Suppose that $\RR_{0,1}>1$ and $\RR_{0,2}>1$, then \begin{enumerate}
    \item If $\gamma_{1\to 2}>1$ and $\gamma_{2\to 1}>1$ there exists a unique positive equilibrium $E_3=(\overline{x},\overline{y_1},\overline{y_2})$. Moreover $E_3$ is such that $0<\overline{y_1}\leq y_{1}^*$ and $0<\overline{y_2}\leq y_{2}^*$ and $(\overline{y_1},\overline{y_2})\neq (y_1^*,y_2^*)$.

    \item If  $\gamma_{1\to 2}=\gamma_{2\to 1}=1$, then $x_1^*=x_2^*$ and there is a continuum $E_\infty$ of positive equilibria, given by
    $$E_\infty=\{E_{1+\sigma}\ :\ \sigma\in(0,1)\}\text{ where }  E_{1+\sigma}=(1-\sigma) E_1 +\sigma {E_2} =(x_1^*,(1-\sigma) y_1^*, \sigma y_2^*),$$
    i.e. $E_\infty$ is the  segment joining $E_1$ to $E_2$. There is no positive equilibrium outside of $E_\infty$.
    \item Otherwise, there is no positive equilibrium.
\end{enumerate}
\end{proposition}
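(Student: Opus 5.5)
The plan is to reduce everything to the algebraic system $T_1(\overline{y_1},\overline{y_2})=T_2(\overline{y_1},\overline{y_2})=1$ from \eqref{Eq:CNS_positive_eq}. I would handle existence in case 1 by a monotonicity and intermediate value argument, and uniqueness, case 2 and case 3 by a single integral identity comparing two candidate equilibria. Throughout, set $M(a)=\int_0^a\mu_x$ and $B_i(a)=\int_0^a\beta_i$, and write $y\cdot B=y_1B_1+y_2B_2$. Since $\RR_{0,i}>1$, each $\beta_i$ is nontrivial, so $T_k$ is continuous and nonincreasing in each variable, and strictly decreasing in $y_k$.

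\emph{Key identity.} Let $\overline y,\hat y\in\R_+^2$ and put $v=e^{-M-\hat y\cdot B}$ and $r=-(\overline y-\hat y)\cdot B$, so that $r(0)=0$. A direct computation should give
\begin{equation*}
\sum_{k=1}^2\frac{\mu_k}{c_k}(\overline y_k-\hat y_k)\bigl(T_k(\overline y)-T_k(\hat y)\bigr)=\Lambda\int_0^\infty -r'(a)\bigl(e^{r(a)}-1\bigr)v(a)\,\d a=-\Lambda\int_0^\infty g(r(a))\bigl(\mu_x(a)+\hat y\cdot\beta(a)\bigr)v(a)\,\d a ,
\end{equation*}
where $g(r)=e^r-r-1\ge 0$. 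The second equality is an integration by parts: $g(r)'=r'(e^r-1)$, the boundary term vanishes at $a=0$ because $r(0)=0$, and it vanishes at $\infty$ because $v\le e^{-\mu_0 a}$ while $g(r)\le e^{|r|}$ grows at most like $e^{Ca}$. I need to check this growth control carefully, possibly by first integrating over $[0,A]$ and letting $A\to\infty$. Since $\mu_x\ge\mu_0>0$, the right-hand side is $\le 0$, with equality if and only if $r\equiv 0$, i.e. $(\overline y-\hat y)\cdot\beta=0$ a.e.

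\emph{Existence in case 1.} For each $y_2\in[0,y_2^*]$, the map $y_1\mapsto T_1(y_1,y_2)$ is strictly decreasing. It satisfies $T_1(0,y_2)\ge T_1(0,y_2^*)=\gamma_{2\to1}>1$ and $T_1(y_1^*,y_2)\le T_1(y_1^*,0)=1$. Hence there is a unique $h_1(y_2)\in(0,y_1^*]$ with $T_1(h_1(y_2),y_2)=1$, and $h_1$ is continuous by the implicit function theorem or by monotonicity. Set $F(y_2)=T_2(h_1(y_2),y_2)$. Then $F(0)=T_2(y_1^*,0)=\gamma_{1\to2}>1$ and $F(y_2^*)\le T_2(0,y_2^*)=1$, so the intermediate value theorem gives a root $\overline{y_2}\in(0,y_2^*]$, with $\overline{y_1}=h_1(\overline{y_2})\in(0,y_1^*]$. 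The point $(y_1^*,y_2^*)$ is excluded because applying the key identity with $\hat y=(y_1^*,0)$ would force $y_2^*\beta_2\equiv0$.

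\emph{Uniqueness and cases 2, 3.} First, if $\overline y$ and $\hat y$ are two positive equilibria, then the left-hand side of the identity is $0$, so $(\overline y-\hat y)\cdot\beta\equiv0$. If $\overline y\neq\hat y$, this makes $\beta_1,\beta_2$ proportional, and we must then check that we are in case 2 via Lemma~\ref{Lemma: CNS_eq_inf}. This gives uniqueness in case 1. Second, take a positive equilibrium $\overline y$ and $\hat y=(y_1^*,0)$. The left-hand side becomes $\frac{\mu_2}{c_2}\overline{y_2}(1-\gamma_{1\to2})$, which is $\le0$, so $\gamma_{1\to2}\ge1$; symmetrically $\gamma_{2\to1}\ge1$. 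If, say, $\gamma_{1\to2}=1$, then $\overline y\cdot\beta=y_1^*\beta_1$, i.e. $(y_1^*-\overline{y_1})\beta_1=\overline{y_2}\beta_2$. Then $x$ coincides with $x_1^*$ and the kernels are proportional, and I expect this to yield item 3 of Lemma~\ref{Lemma: CNS_eq_inf}, hence both $\gamma$'s equal $1$. So a positive equilibrium forces either both $\gamma>1$ or both $=1$, which proves case 3. In case 2, Lemma~\ref{Lemma: CNS_eq_inf} gives $y_2^*\beta_2=y_1^*\beta_1$, hence $x_1^*=x_2^*$ by the formula in Proposition~\ref{prop:boundary_equilibria}. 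Any positive equilibrium then satisfies $\overline y\cdot\beta=y_1^*\beta_1$, i.e. $\overline{y_1}/y_1^*+\overline{y_2}/y_2^*=1$, which is exactly the segment $E_\infty$; conversely every point of $E_\infty$ is an equilibrium since $\overline x=x_1^*$.

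The main obstacle I expect is the degenerate bookkeeping in the last step: showing that $\gamma_{1\to2}=1$ together with a positive equilibrium forces $\gamma_{2\to1}=1$ through the proportionality relation. The analytic justification of the boundary term in the integration by parts is the other delicate point.
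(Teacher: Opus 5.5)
The one step that fails is your exclusion of $(\overline{y_1},\overline{y_2})=(y_1^*,y_2^*)$ in case 1. Take $\overline y=(y_1^*,y_2^*)$ and $\hat y=(y_1^*,0)$. The first summand of your identity vanishes because $\overline{y_1}-\hat y_1=0$, and the left-hand side equals $\frac{\mu_2}{c_2}y_2^*\left(T_2(y_1^*,y_2^*)-T_2(y_1^*,0)\right)=\frac{\mu_2}{c_2}y_2^*(1-\gamma_{1\to 2})<0$. This is exactly the sign the identity predicts, so the equality case is never reached and nothing forces $y_2^*\beta_2\equiv 0$. The other natural choices $\hat y=(0,y_2^*)$ and $\hat y=(0,0)$ also give strictly negative left-hand sides.

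The identity only controls a weighted sum of the two equilibrium equations, whereas this point has to be ruled out by using them one at a time. The difference $T_1(y_1^*,0)-T_1(y_1^*,y_2^*)=\frac{\Lambda c_1}{\mu_1}\int_0^\infty\beta_1e^{-M-y_1^*B_1}\left(1-e^{-y_2^*B_2}\right)\d a$ has a nonnegative integrand, so $T_1(y_1^*,y_2^*)=1=T_1(y_1^*,0)$ forces $\beta_1B_2=0$ a.e. Likewise $T_2(y_1^*,y_2^*)=1=T_2(0,y_2^*)$ forces $\beta_2B_1=0$ a.e. Then $(B_1B_2)'=\beta_1B_2+B_1\beta_2=0$ a.e., so $B_1B_2\equiv 0$. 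This is impossible, because each $B_i$ is nondecreasing and eventually positive. The paper's case distinction on the supports encodes the same fact: \eqref{Eq:inf-beta2} and \eqref{Eq:inf-beta1} cannot hold simultaneously, and in each case one of the inequalities $\overline{y_k}\le y_k^*$ is strict.

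With this repair the rest of your plan is correct, and it is a genuinely different route from the paper's. Your identity is the static counterpart of the Lyapunov derivatives \eqref{Eq:L1prime} and $\d L_3/\d t$: indeed $e^r-r-1$ is the Volterra function evaluated at $e^r$, the ratio of the two resource profiles. Applying it directly to pairs of equilibria makes uniqueness in case 1 and all of case 3 self-contained. By contrast, the paper proves existence by a level-curve argument, defers uniqueness to Lemma \ref{Lemma:Lyap_welldef_Orbit}.3, and obtains case 3 only through Proposition \ref{Prop:Lyap_welldef} and Theorem \ref{Thm:AttractivenessR0>1}.

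The two points you flagged close easily:
\begin{itemize}
\item \emph{Boundary term.} The bound $g(r)\le e^{|r|}$ is not enough when the linear growth rate of $|r|$ exceeds $\mu_0$. Use instead $e^{r}v=e^{-M-\overline y\cdot B}\le e^{-\mu_0 a}$ and $v\le e^{-\mu_0 a}$, so that $0\le g(r)v\le (1+|r|)e^{-\mu_0 a}\to 0$.
\item \emph{Degenerate bookkeeping.} If $\beta_1=\delta\beta_2$ and a positive equilibrium $\overline y$ exists, then $T_1(\overline y)=\frac{c_1\mu_2}{c_2\mu_1}\delta\,T_2(\overline y)$ with both sides equal to $1$. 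This pins $\delta=\frac{c_2\mu_1}{c_1\mu_2}$, which is item 3 of Lemma \ref{Lemma: CNS_eq_inf}, hence $\gamma_{1\to 2}=\gamma_{2\to 1}=1$.
\end{itemize}

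Incidentally, your identity with $\overline y=(y_1^*,0)$ and $\hat y=(0,y_2^*)$ also gives the implication $(1)\Rightarrow(2)$ of that lemma directly, without the dynamical argument the paper uses.
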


We will later show (Proposition \ref{Prop:Lyap_welldef}) that the case $\max\{\gamma_{i\to j},\gamma_{j\to i}\}<1$ and the case $\gamma_{i\to j}=1,\ \gamma_{j\to i}<1$ ($i,j\in\{1,2\},\ i\neq j$) never happen. Consequently, the third item actually states that there is no positive equilibrium when  $\gamma_{j\to}>1$ and $\gamma_{i\to j}\leq 1$. This is a direct consequence of the global attractivity of $E_i$ in  $X_1\cap X_2$, which will be stated in Theorem \ref{Thm:AttractivenessR0>1}. 

In this section, we only prove the existence statement of the first item, as well as the second item. The uniqueness of $E_3$ will be proven later (Lemma \ref{Lemma:Lyap_welldef_Orbit}.3). 

\begin{proof}[Proof of Proposition \ref{Prop:Positive_equilibria}]
\begin{enumerate}
    \item Suppose that $\gamma_{1\to 2}>1$ and $\gamma_{2\to 1}>1$. First we note that the functions $\R_+\ni z\longmapsto T_1(z,y_2)$ and $\R_+\ni z\longmapsto T_2(y_1,z)$ are decreasingly going to zero as $z$ goes to $+\infty$. Let us define the two following assumptions:
    \begin{equation}\label{Eq:inf-beta2}
    \inf(\supp(\beta_2))\geq \sup(\supp(\beta_1))
    \end{equation}
    and
    \begin{equation}\label{Eq:inf-beta1}
    \inf(\supp(\beta_1))\geq \sup(\supp(\beta_2)).
    \end{equation}
    We readily see that those two conditions are not compatible since it would lead to $\beta_1=\beta_2\equiv 0$ and then $\RR_{0,1}=\RR_{0,2}=0$ that is absurd.
    
    Suppose now that \eqref{Eq:inf-beta2} holds then the function $\R_+\ni z\longmapsto T_1(y_1,z)$ is constant implying that $T_1(y_1^*,y_2)=1$ for every $y_2\geq 0$. Since \eqref{Eq:inf-beta1} does not hold then the function $\R_+\ni z\longmapsto T_2(z,y_2)$ is decreasingly going to zero as $z$ goes to $+\infty$ for every $y_2\geq 0$. It follows from $T_2(y_1^*,0)=\gamma_{1\to 2}>1$ that $y_1\longmapsto T_2(y_1,0)-1$ has exactly one root $\widehat{y_1}>y_{1}^*$. Consequently, the level curve $\{(y_1,y_2)\in \R_+^2 : T_2(y_1,y_2)=1\}$ continuously joins $(0,y_{2}^*)$ and $(\widehat{y_1},0)$ while $\{(y_1,y_2)\in \R_+^2 : T_1(y_1,y_2)=1\}=\{(y_1^*,y_2), y_2\geq 0\}$. It then readily follows that there exists $(\overline{y_1},\overline{y_2})$ such that $(\overline{x},\overline{y_1},\overline{y_2})$ is an equilibrium, where $\overline{x}$ is defined by \eqref{Eq:CNS_positive_eq} and $\overline{y_1}=y_1^*$. The monotony of $T_2$ also implies that $0<\overline{y_2}< y_{2}^*$ and the equilibrium is unique in this case.
    
    The case where \eqref{Eq:inf-beta1} holds but not \eqref{Eq:inf-beta2} is similar and leads to a unique coexistence equilibrium $(\overline{x},\overline{y_1},y_2^*)$ with $0<\overline{y_1}< y_{1}^*$ by monotony of $T_1$ and $\overline{x}$ is given by \eqref{Eq:CNS_positive_eq}.
    
    Finally, in the case where neither \eqref{Eq:inf-beta1} nor \eqref{Eq:inf-beta2} hold, we see that the functions $y_1\longmapsto T_2(y_1,0)-1$ and $y_2\longmapsto T_1(0,y_2)-1$ have exactly one root $\widehat{y_1}>y_{1}^*$ and $\widehat{y_2}>y_{2}^*$ respectively. Consequently, the level curve $\{(y_1,y_2)\in \R_+^2 : T_1(y_1,y_2)=1\}$ continuously joins $(0,\widehat{y_2})$ and $(y_{1}^*,0)$, while the level curve $\{(y_1,y_2)\in \R_+^2 : T_2(y_1,y_2)=1\}$ continuously joins $(0,y_{2}^*)$ and $(\widehat{y_1},0)$. It then readily follows that there exists  $(\overline{y_1},\overline{y_2})$ such that $(\overline{x},\overline{y_1},\overline{y_2})$ is an equilibrium, where $\overline{x}$ is defined by \eqref{Eq:CNS_positive_eq}. The monotony of $T_1$ and $T_2$ also implies that $0<\overline{y_1}<y_{1}^*$ and $0<\overline{y_2}<y_{2}^*$.
    
    \item If $\gamma_{1\to 2}=\gamma_{2\to 1}=1$, then Lemma \ref{Lemma: CNS_eq_inf} readily implies that $T_1=T_2$ and that for all $\sigma\in (0,1)$, $T_1((1-\sigma )y_1^*,\sigma y_2^*)=1$. Moreover, formula \eqref{Eq:CNS_positive_eq} implies that if there exists $\sigma\in (0,1)$ such that $y_1=(1-\sigma) y_1^*$ and $y_2= \sigma y_2^*$ then $(x^*,y_1^*,y_2^*)$ is an equilibrium if and only if $$x^*(a)=\Lambda e^{-\int_0^a\mu_x(s)\d s} e^{(1-\sigma) y_1^*\int_0^a\beta_1(s) \d s - \sigma y_2^*\int_0^a\beta_2(s) \d s}.$$
    
    Then Lemma \ref{Lemma: CNS_eq_inf} implies that  $x^*=x_1^*=x_2^*$. 
    
    Since $(y_1,y_2) \longmapsto T_1(y_1,y_2)$ is a decreasing function with respect to both $y_1$ and $y_2$ on $\R_+^2$, then $T_1(y_1,y_2)<1$ (resp $>1$) on the right (resp. left) side of the segment joining $(y_1^*,0)$ to $(0,y_2^*)$, so there is no other positive equilibrium.
\end{enumerate}
\end{proof}

\begin{remark}
It is worth noticing that if $\beta_2=c\beta_1$ for some $c\in\R_+^*$, then equations $\eqref{Eq:Def_y^*}$ and \eqref{Eq:Def_gamma} readily imply that $\gamma_{2\to 1}=1/\gamma_{1\to 2}$. Consequently, in this case, either there is a continuum of equilibria (in the critical and structurally unstable case $c=c_1\mu_1/c_2\mu_2$ which, from Lemma \ref{Lemma: CNS_eq_inf}, implies $\gamma_{2\to 1}=\gamma_{1\to 2}=1$), or there is no positive equilibrium and one competitor goes extinct. This is consistent with the belief that niche differentiation is necessary to avoid competitive exclusion. \label{Remark:ProportionalBeta}
\end{remark}

\begin{remark}\label{Remark:OptimalBeta}
Since $x_1^*$ is a decreasing function of $a$, then  the definition of $\gamma_{1\to 2}$ implies that in the situation where the environment is occupied by single population $y_1$ (considered at its equilibrium $E_1$),   the best competitive strategy for a newly introduced population $y_2$ to challenge the resident population $y_1$ (\textit{i.e.} the $\beta_2$ shape that maximizes $\gamma_{1\to 2}$, as we will see with Theorem \ref{Thm:AttractivenessR0>1} that the survival of $y_2$ depends on $\gamma_{1\to 2}$) is such that $\beta_2$ in concentrated near the age $a=0$ (optimally, a Dirac distribution in $0$). Indeed, since the resource influx is constant, then there is no need for the competitors to preserve the resource sustainability (as it would have been in the case of a linear birth rate). Consequently the best strategy is to be the first competitor to collect enough resource.
\end{remark}

\subsection{Attractivity, stability and basins of attraction}
We investigate the stability and attractiveness of every equilibrium, depending on the values of the basic reproduction numbers and invasion fitnesses $R_{0,i}$ and  $\gamma_{i\to j}$ ($i,j\in\{1,2\})$. For the sake of clarity, we gathered hereunder the main results on equilibria attractivity and stability, which are proved in Section \ref{Sec:GlobalAnalysis}. 

\begin{proposition}[Positive invariance of sets]\label{Prop:Attractors}
For $i\in\{1,2\}$, the sets $X_i$ and $\partial  X_i$ are positively invariant, that is, for all $t\geq 0$  $\Phi_t(X_i)\subset  X_i$ and $\Phi_t(\partial  X_i)\subset \partial  X_i$. 
\end{proposition}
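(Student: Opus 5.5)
The plan is to work directly with the $y_i$ components of the integrated solution, which are scalar functions satisfying a linear ODE in $y_i$ with a continuous, bounded coefficient. Fix $v\in X_+$ and $i\in\{1,2\}$.

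First I would show that $t\mapsto y_i(t)=\Phi^i_t(v)$ is $\Co^1$ and solves
$$y_i'(t)=y_i(t)\Big(c_i\int_0^\infty\beta_i(a)\Phi^x_t(v)(a)\,\d a-\mu_i\Big)=:y_i(t)g_i(t).$$
This comes from the integrated expression \eqref{Eq:IntegratedExpression_y}. The continuity of the semiflow $t\mapsto\Phi_t(v)$ in $X_+$ (Proposition \ref{Prop:solutions}) and $\beta_i\in L^\infty$ give that $t\mapsto\int_0^\infty\beta_i\Phi_t^x(v)$ is continuous. The same continuity gives the bound $|g_i(t)|\le c_i\|\beta_i\|_{L^\infty}M+\mu_i$ on every interval $[0,T]$, where $M$ comes from \eqref{Eq:MassMajoration}. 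The integrand in \eqref{Eq:IntegratedExpression_y} is therefore continuous, so we may differentiate.

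Second, since the equation is linear in $y_i$ with $g_i$ given, uniqueness for scalar linear ODEs (or Gronwall) yields the explicit formula
$$\Phi^i_t(v)=y_{i,0}\exp\Big(\int_0^t g_i(s)\,\d s\Big)\qquad\text{for all } t\ge0.$$
One can also argue directly: set $z(t)=y_i(t)e^{-\int_0^t g_i}$, compute $z'=0$, and use the integrated identity so that no differentiability issue arises.

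Both invariances then follow at once. If $y_{i,0}=0$, then $\Phi^i_t(v)=0$ for all $t$, so $\Phi_t(\partial X_i)\subset\partial X_i$. If $y_{i,0}>0$, the exponential factor is strictly positive and finite, so $\Phi^i_t(v)>0$ and $\Phi_t(X_i)\subset X_i$. Membership in $X_+$ itself is already guaranteed by the positivity statement of Proposition \ref{Prop:solutions}.

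The only step needing care is the first one: justifying that the $y_i$ component of the integrated solution really satisfies the pointwise ODE with a continuous coefficient. This relies on the continuity of $t\mapsto\Phi_t^x(v)$ in $L^1$ and on the global bound \eqref{Eq:MassMajoration}. Everything after that is elementary.
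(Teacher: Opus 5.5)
Your argument is correct and is essentially the paper's. The paper also works with the scalar equation for $y_i$, but it uses $x\ge 0$ to get $y_i'\ge -\mu_i y_i$, hence $y_i(t)\ge y_{i,0}e^{-\mu_i t}>0$, and calls the invariance of $\partial X_i$ straightforward; your exact formula $y_i(t)=y_{i,0}\exp\bigl(\int_0^t g_i(s)\,\d s\bigr)$ covers both cases at once, does not need positivity of $x$, and makes explicit the differentiability and uniqueness steps that the paper leaves implicit.
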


\begin{proof}\mbox{}
For any $i\in\{1,2\}$, any initial condition $v=(x_0,y_{1,0},y_{2,0})\in  X_1$ and any $t\geq 0$, $y_i'(t)\geq -\mu_i y_i(t)$, then $y_i(t)\geq y_{i,0}e^{-\mu_i t}>0$. The positive invariance of $\partial X_i$ is straightforward.
\end{proof}

A consequence of Proposition \ref{Prop:Attractors} is that if there exists $i\in\{1,2\}$ such that $v\in \partial X_i$ then the model \eqref{Eq:model} reduces to a single predator model.

\begin{theorem}[Asymptotic stability of $E_0$]  \label{Thm:Attractiveness_E0}Suppose that one of the following assumptions holds:
\begin{enumerate}
        \item $\underset{i\in\{1,2\}}{\max}\{\RR_{0,i}\}\leq 1$ and $S= X_+$
        \item $\RR_{0,1}\leq 1$ and $S=\partial  X_2$
        \item $\RR_{0,2}\leq 1$ and $S=\partial  X_1$
        \item $S=\underset{i\in\{1,2\}}{\bigcap}\partial  X_i$
\end{enumerate}
then $\{E_0\}$  is globally asymptotically stable in $S$.
\end{theorem}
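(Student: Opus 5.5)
We prove global asymptotic stability of $\{E_0\}$ in $S$ as global attractivity plus Lyapunov stability. By Proposition~\ref{Prop:Attractors}, each $S$ in cases 2--4 is positively invariant, and $S=X_+$ trivially is. We use an explicit comparison argument rather than a Lyapunov functional in the $x$-variable, since the Volterra-type functional $\int x_0^*\,g(x/x_0^*)$ is not defined on all of $X_+$.

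\textbf{Step 1: the resource component.} For every $v\in X_+$, the Duhamel formula \eqref{Eq:Duhamel} and $y_i\geq 0$ give
$$\Phi_t^x(v)(a)\leq x_0(a-t)e^{-\int_{a-t}^a\mu_x}\chi_{[t,\infty)}(a)+x_0^*(a)\chi_{[0,t)}(a).$$
Hence $\int_0^\infty\beta_i(a)\Phi^x_t(v)(a)\,\d a\leq \int_0^\infty \beta_i x_0^* \,\d a+\|\beta_i\|_{L^\infty}\|x_0\|_{L^1}e^{-\mu_0 t}$. Write $\ep_i(t)$ for the second term. Then
$$y_i'\leq y_i\big(\mu_i(\RR_{0,i}-1)+c_i\ep_i(t)\big),$$
and integrating gives $y_i(t)\leq y_{i,0}\,e^{\mu_i(\RR_{0,i}-1)t}\,e^{c_i\|\beta_i\|\|x_0\|/\mu_0}$. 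Whenever $y_{i,0}=0$ (the relevant $\partial X_i$), $y_i\equiv0$.

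\textbf{Step 2: convergence of $y_i$.}
\begin{itemize}
\item If $\RR_{0,i}<1$, the bound of Step 1 gives exponential decay of $y_i$.
\item If $\RR_{0,i}=1$, this is the main obstacle, since Step 1 only gives boundedness: $y_i(t)\leq C y_{i,0}$. We then use $\omega$-limit sets. Take $w\in\omega(v)$ and a complete orbit through it in $\omega(v)$ (Lemma~\ref{Lemma:omega-limit}). Along this orbit the age profile satisfies $x(t,a)=x_0^*(a)\exp(-\sum_i\int_0^a\beta_i(s)y_i(t-a+s)\d s)\leq x_0^*(a)$. Therefore
$$y_i'=\mu_i y_i\Big(\tfrac{c_i}{\mu_i}\int\beta_i x-1\Big)\leq 0,$$
so $y_i$ is nonincreasing along the complete orbit and has limits as $t\to\pm\infty$. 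On the $\alpha$- and $\omega$-limits of this orbit $y_i$ is constant, say $\ell$. If $\ell>0$, invariance forces $\int\beta_i x=\int\beta_i x_0^*$, that is $x=x_0^*$ on the support of $\beta_i$. This in turn forces $\ell\int_0^a\beta_i=0$ on that support, which contradicts $\RR_{0,i}>0$. Thus $y_i\to0$ in both time directions, and monotonicity gives $y_i\equiv0$ on $\omega(v)$. (In case 1, when both equal $1$, apply this to each $i$ separately; the inequality $x\leq x_0^*$ holds regardless.)
\end{itemize}

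\textbf{Step 3: conclusion of attractivity.} With $y_1,y_2\equiv0$ on $\omega(v)$, the $x$-equation on complete orbits gives $x=x_0^*$. So $\omega(v)=\{E_0\}$, and $\Phi_t(v)\to E_0$ for all $v\in S$.

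\textbf{Step 4: stability.} Let $\|v-E_0\|\leq\delta$. By Step 1 and the case analysis, $y_i(t)\leq Cy_{i,0}\leq C\delta$ uniformly in $t$, where $C=e^{c_i\|\beta_i\|(\|x_0^*\|+\delta)/\mu_0}$ when $\RR_{0,i}\leq1$. Put $w(t,a)=x(t,a)-x_0^*(a)$ and compare the characteristic formula with that of $x_0^*$. Using $|e^{-p}-e^{-q}|\leq|p-q|$, we get
$$\|w(t)\|_{L^1}\leq e^{-\mu_0t}\|w(0)\|_{L^1}+\sum_i\|\beta_i\|_{L^\infty}\sup_s y_i(s)\int_0^\infty a\,x_0^*(a)\,\d a\leq C'\delta.$$
This yields Lyapunov stability of $E_0$ in $S$. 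Combined with Step 3, $\{E_0\}$ is globally asymptotically stable in $S$. Case 4 is the special case $y_1=y_2\equiv0$, where only Step 4's linear estimate is needed.
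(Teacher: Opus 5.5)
Your proof is correct, but it takes a different route from the paper's.

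\textbf{The paper's route.} It treats all four cases with one template. It takes a compact $K\subset S$ and uses asymptotic smoothness to obtain a compact invariant set $\omega(K)$ that attracts $K$. Then it shows that the full Volterra functional $L_0$ is well defined and non-increasing along every bounded complete orbit, because $0<x\le x_0^*$ there. A LaSalle-type argument follows: $L_0$ is constant on $\alpha(v)$ and $\omega(v)$ and strict for $\{E_0\}$, which forces $\omega(K)=\{E_0\}$. Stability is then obtained abstractly from \cite[Theorem 2.39]{SmithThieme2011}: a compact attractor of compact sets plus state-continuity uniformly in finite time.

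\textbf{Your route.} You use the explicit Duhamel comparison, which bounds the resource by the transported initial datum plus $x_0^*$. In the subcritical case this gives exponential decay of $y_i$ at rate $\mu_i(1-\RR_{0,i})$. In the critical case $\RR_{0,i}=1$ you keep only the $y_i$-part of $L_0$: the bound $x\le x_0^*$ on complete orbits makes $y_i$ itself non-increasing. Your $\alpha$-limit argument then replaces the paper's strictness step: $\ell>0$ forces $x=x_0^*$ on $\{\beta_i>0\}$, hence $\ell\int_0^a\beta_i=0$ there, which is impossible.

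\textbf{What each buys.} Your approach is elementary and quantitative. It gives an explicit decay rate and an explicit $\delta$--$\varepsilon$ stability estimate, using $\int_0^\infty a\,x_0^*(a)\,\d a<\infty$. It needs neither the logarithmic term of $L_0$ nor the abstract attractor machinery. The paper's approach is a single scheme (Lyapunov functional on complete orbits, LaSalle on $\omega(K)$, Theorem 2.39) that is reused verbatim for $E_1$, $E_2$, $E_3$ and $E_\infty$, where no comparison with an explicit profile is available. It also yields attraction of compact sets directly. In your setting that follows from point attraction plus Lyapunov stability by a standard covering argument.

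\textbf{A presentational point.} Along the complete orbit, the limits of $y_i$ as $t\to-\infty$ and $t\to+\infty$ are in general different constants. Your argument only needs to be applied on the $\alpha$-limit. There $\ell_-=0$, and monotonicity immediately gives $y_i\equiv 0$.
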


\begin{remark}
More precisely, one can note that the equilibrium $E_0$ is globally exponentially stable in $\bigcap_{i\in\{1,2\}} \partial X_i$. Indeed, consider the initial condition $v=(x_0,0,0)\in \underset{i\in\{1,2\}}{\bigcap} \partial X_i$. Then \eqref{Eq:Duhamel} rewrites as
$$x(t,a)=\begin{cases} x_0(a-t)e^{-\int_{a-t}^a \mu_x(s)ds} &\mbox{if } a\geq t,\\
x_0^*(a) & \mbox{if } t\geq a.
\end{cases}$$
Moreover, for $0\leq t< a$, then $x_0^*(a)=\Lambda e^{-\int_0^a\mu_x(s) \d s}=\Lambda e^{-\int_0^{a-t}\mu_x(s) \d s}e^{-\int_{a-t}^a\mu_x(s) \d s}=x_0^*(a-t)e^{-\int_{a-t}^a\mu_x(s) \d s}.$ We deduce that
$\|x(t,.)-x_0^*\|_{L^1}=\int_t^\infty |(x_0(a-t) -x_0^*(a-t))|e^{-\int_{a-t}^a \mu_x(s)ds}da\leq \|x_0-x_0^*\|_{L^1}e^{-\mu_0 t}\underset{t\to+\infty}{\longrightarrow} 0.$ 
\end{remark}

\begin{theorem}[Asymptotic stability with only one non-trivial equilibrium]\label{Thm:Attractiveness_E1E2_1eq}\mbox{} Let $(i,j)\in\{1,2\}^2, j\neq i$, and assume that $\RR_{0,i}>1$. Then:
\begin{enumerate}
    \item  $E_i$ is  globally  asymptotically stable in $S=X_i\cap\partial X_j$.
    \item If moreover $\RR_{0,j}\leq 1$ then $E_i$ is globally asymptotically stable in $S=X_i$. 
\end{enumerate}
\end{theorem}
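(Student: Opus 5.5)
I would prove Theorem \ref{Thm:Attractiveness_E1E2_1eq} with a Volterra-type Lyapunov functional built around $E_i$, combined with the LaSalle principle on omega-limit sets, as outlined in the introduction. Let $g(z)=z-1-\ln z$, $z>0$, and define for $v=(x,y_1,y_2)$
$$
V_i(v)=\int_0^\infty \psi_i(a)\, x_i^*(a)\, g\!\left(\frac{x(a)}{x_i^*(a)}\right)\d a+\frac{1}{c_i}\, y_i^*\, g\!\left(\frac{y_i}{y_i^*}\right)+\frac{1}{c_j}\, y_j ,
$$
with the weight $\psi_i(a)=\int_a^\infty \beta_i(s)\, e^{-\int_a^s(\mu_x+y_i^*\beta_i)}\,\d s$. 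This weight solves $\psi_i'=(\mu_x+y_i^*\beta_i)\psi_i-\beta_i$, and we have $\psi_i(0)=\frac{\mu_i}{c_i\Lambda}$ by \eqref{Eq:Def_y^*}.

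\textbf{Derivative along smooth orbits.} Formally, integrating by parts in $a$, using $x(t,0)=x_i^*(0)=\Lambda$, and using the equilibrium relations of Proposition \ref{prop:boundary_equilibria}, the computation should give
$$
\frac{\d}{\d t}V_i=-\int_0^\infty \psi_i'(a)\,\bigl(\dots\bigr)\;\dots\;+\;\frac{y_j}{c_j}\Bigl(c_j\int_0^\infty\beta_j x\,\d a-\mu_j\Bigr)-y_j\int_0^\infty \psi_i\beta_j x\,\d a .
$$
The $y_i$ cross terms cancel by the choice of $\psi_i$, and the remaining $x$-part is $-\int \beta_i x_i^* y_i^*\,(\cdots)\le 0$ by the usual $g$ argument. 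It is cleaner to first treat $S=X_i\cap\partial X_j$, where $y_j\equiv0$ by Proposition \ref{Prop:Attractors}. There I get $\dot V_i=-\int_0^\infty \mu_x x_i^*\,\psi_i\, g(\cdot)$-type nonpositive terms, vanishing only when $x=x_i^*$. The largest invariant set is then $\{E_i\}$, because $x\equiv x_i^*$ forces $y_i'=0$, and the transport equation then forces $y_i=y_i^*$.

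\textbf{Case $\RR_{0,j}\le1$ on $X_i$.} Here the $y_j$ terms must be controlled. If I instead choose the $y_j$ coefficient so that it yields $y_j\bigl(\int\beta_j x_i^*\,\d a\cdot c_j-\mu_j\bigr)/c_j$ plus a nonpositive remainder, I need $\gamma_{i\to j}\le1$. That holds since $\gamma_{i\to j}\le\RR_{0,j}\le1$ by Remark \ref{Remark: Comparison_gamma_R}. The remainder $-y_j\int\beta_j(x-x_i^*)$ needs absorbing. I would handle it by adding to $V_i$ an age-weighted term with the same structure, i.e. using a weight $\phi_j$ solving $\phi_j'=(\mu_x+y_i^*\beta_i)\phi_j-c_j\beta_j$, exactly as in multi-strain Lyapunov constructions, so that the linear-in-$x$ contributions telescope.

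\textbf{Rigour.} The functional is not defined everywhere ($\ln x$ can be $-\infty$). So I would work on the omega-limit set $\omega(v)$, which is compact and invariant (Lemma \ref{Lemma:omega-limit}). On complete orbits in $\omega(v)$, \eqref{Eq:Duhamel} gives $x(t,a)=\Lambda e^{-\int_0^a(\cdots)}$ for all $a$, so $x/x_i^*$ is bounded above and below on $\omega(v)$, provided $y_i$ stays bounded away from $0$ along $\omega(v)$. That is the main obstacle: uniform persistence of $y_i$ in $X_i$. I would prove it via \cite{SmithThieme2011,HaleWaltman89}. The boundary attractor in $\partial X_i$ is $E_0$ (or $E_j$ is absent, since $\RR_{0,j}\le1$), which is isolated and acyclic. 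Moreover $E_0$ is weakly repelling, since near $E_0$ we have $y_i'\ge y_i\mu_i(\RR_{0,i}-1-\varepsilon)>0$. Given persistence, $V_i$ is bounded and nonincreasing along complete orbits in $\omega(v)$, so $\omega(v)$ is contained in the maximal invariant set where $\dot V_i=0$, namely $\{E_i\}$, by the generalized LaSalle principle \cite[Theorem 2.52]{SmithThieme2011}. Local stability follows from the same functional near $E_i$ together with Corollary \ref{Cor:attractor}.
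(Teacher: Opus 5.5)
\textbf{Gap 1: the weighted functional does not give a Lyapunov derivative.} Your overall architecture matches the paper's: a Volterra functional centred at $E_i$, uniform $\rho_i$-persistence via \cite{SmithThieme2011}, regularity of complete orbits coming from the boundary condition, and a generalized LaSalle principle. The core computation, however, fails with the weight you chose. With $\psi_i'=(\mu_x+y_i^*\beta_i)\psi_i-\beta_i$ and $x(t,0)=x_i^*(0)=\Lambda$, one gets exactly
\begin{align*}
\frac{\d}{\d t}V_i={}&-\int_0^\infty \beta_i x_i^*\, g\Big(\frac{x}{x_i^*}\Big)\d a+(y_i-y_i^*)\int_0^\infty (1-\psi_i)\beta_i (x-x_i^*)\,\d a\\
&+y_j\int_0^\infty (1-\psi_i)\beta_j(x-x_i^*)\,\d a+y_j\frac{\mu_j}{c_j}(\gamma_{i\to j}-1).
\end{align*}
This causes two problems.
\begin{itemize}
\item The $y_i$ cross term does not cancel. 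That would require $\psi_i\equiv 1$ on $\supp\beta_i$, which your ODE forbids unless $\mu_x=(1-y_i^*)\beta_i$ there. The term has no sign.
\item The dissipation only sees $x$ on $\supp\beta_i$, so even ``$\dot V_i=0\Rightarrow x=x_i^*$'' is lost.
\end{itemize}
Your weight is the one needed when the \emph{competitors} are age-structured. Here the structured variable is the resource, each gain $\int_0^\infty\beta_k x\,\d a$ is linear in $x$ age by age, and the constant inflow kills the boundary term. The right choice is therefore $\psi_i\equiv 1$, i.e.\ the paper's $L_i$. Then $c_i\int_0^\infty\beta_i x_i^*\,\d a=\mu_i$ and the definition of $\gamma_{i\to j}$ give \eqref{Eq:L1prime} exactly. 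The dissipation weight becomes $\mu_x+y_i^*\beta_i\ge\mu_0>0$, and the only remainder is $y_j\frac{\mu_j}{c_j}(\gamma_{i\to j}-1)$, so no extra age-weighted term $\phi_j$ is needed.

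What is needed is the \emph{strict} inequality $\gamma_{i\to j}<1$. This holds when $\RR_{0,j}\le 1$: if $\beta_j\not\equiv 0$ then $\gamma_{i\to j}<\RR_{0,j}$, and otherwise $\gamma_{i\to j}=0$. Strictness ensures that $\dot L_i\equiv 0$ forces $y_j=0$ and $x\equiv x_i^*$, hence $y_i\equiv y_i^*$.

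\textbf{Gap 2: the stability step does not go through as stated.} $V_i$ is neither finite nor continuous on $L^1$-neighbourhoods of $E_i$. Every $L^1$-ball around $x_i^*$ contains densities vanishing on sets of positive measure, where the logarithmic term is infinite, so sublevel sets of $V_i$ are not neighbourhoods. Moreover, Corollary \ref{Cor:attractor} concerns the global attractor $\mathcal{A}$, not $E_i$.

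The paper avoids this by working with $\omega(K)$ for every compact $K\subset S$, rather than with $\omega(v)$:
\begin{itemize}
\item uniform persistence gives $\omega(K)\subset\{y_i\ge\ep\}\subset S$;
\item Lemma \ref{Lemma:Lyap_welldef_Orbit} makes $L_i$ a well-defined strict Lyapunov functional on the bounded complete orbits in $\omega(K)$;
\item the LaSalle-type argument collapses $\omega(K)$ to $\{E_i\}$;
\item stability then follows from \cite[Theorem 2.39]{SmithThieme2011}, since $\{E_i\}$ attracts compact sets and $\Phi$ is state-continuous uniformly in finite time.
\end{itemize}

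\textbf{Minor point.} Persistence is needed only to keep $g(y_i/y_i^*)$ finite on $\omega(K)$. Integrability of $x_i^*g(x/x_i^*)$ on bounded complete orbits follows from the representation $x=\Lambda e^{-\int_0^a(\cdots)}$ alone, since $|\ln(x/x_i^*)|$ grows at most linearly in $a$. It does not require $y_i$ to stay away from $0$.
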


\begin{theorem}[Asymptotic stability with two non-trivial equilibria : non-critical case]\label{Thm:AttractivenessR0>1}
Let $i,j\in\{1,2\}$, $i\neq j$ and suppose $\min_{k\in\{1,2\}}\{\RR_{0,k}\}>1$.
\begin{enumerate}
    \item Suppose that $\gamma_{j\to i}>1$ and $\gamma_{i\to j}\leq 1$ then $E_i$ is globally asymptotically stable in $S=\cap_{k\in\{1,2\}} X_k$.
   \item Suppose that $\min_{k\in\{1,2\}}\{\gamma_{j\to i}\}>1$, then $E_3$ is globally  asymptotically stable in $S=\cap_{k\in\{1,2\}} X_k$.
\end{enumerate}
\end{theorem}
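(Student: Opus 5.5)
We follow the strategy announced in the introduction: Volterra-type Lyapunov functionals, uniform persistence, and the generalised LaSalle principle \cite[Theorem 2.52]{SmithThieme2011}. Fix a target equilibrium $E^\dagger=(x^\dagger,y_1^\dagger,y_2^\dagger)$, equal to $E_i$ in item 1 and to $E_3$ in item 2. With $g(s)=s-1-\ln s$, consider
$$V(x,y_1,y_2)=\int_0^\infty \psi(a)\,x^\dagger(a)\,g\!\left(\frac{x(a)}{x^\dagger(a)}\right)\d a+\sum_{k=1}^2 \frac{1}{c_k}\,W_k(y_k),$$
where $W_k(y_k)=y_k^\dagger g(y_k/y_k^\dagger)$ if $y_k^\dagger>0$, and $W_k(y_k)=y_k$ if $y_k^\dagger=0$ (this is the case $k=j$ in item 1).

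\textbf{Choice of the weight and the derivative.} The weight $\psi$ must absorb the transport term after integration by parts. Since $x^\dagger$ satisfies $(x^\dagger)'=-(\mu_x+\sum_k\beta_k y_k^\dagger)x^\dagger$ with $x^\dagger(0)=\Lambda=x(t,0)$, the boundary term vanishes and $\psi\equiv1$ should suffice. Along a sufficiently regular orbit I would then compute
$$\frac{\d V}{\d t}=-\int_0^\infty \sum_k\beta_k(a)(y_k-y_k^\dagger)(x-x^\dagger)\left(1-\frac{x^\dagger}{x}\right)\d a+\sum_k (y_k-y_k^\dagger)\int_0^\infty\beta_k(x-x^\dagger)\d a+\text{(remainder)}.$$
This is not obviously signed: the $x$–$y$ cross terms do not cancel exactly as in the ODE case, because the resource equation is nonlinear in $x$ through $x\ln x$. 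A cleaner route is to work with the characteristic representation \eqref{Eq:Duhamel}. On complete orbits in the attractor one has $x(t,a)=x^\dagger(a)\exp\bigl(-\sum_k\int_0^a\beta_k(s)(y_k(t-a+s)-y_k^\dagger)\d s\bigr)$. The functional should therefore be written in terms of the history of $y$: a McCluskey-type functional
$$V=\sum_k\frac1{c_k}W_k(y_k(t))+\int_0^\infty \theta(a)\,G_a(\text{history})\d a,$$
built so that $\dot V=-\int_0^\infty x^\dagger(a)\,g\bigl(x(t,a)/x^\dagger(a)\bigr)\times(\text{positive factor})\d a$ plus a term $\frac{\mu_j}{c_j}(\gamma_{i\to j}-1)y_j\le0$ in item 1. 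In item 2 the linear terms cancel exactly by the equilibrium relations $c_k\int\beta_k\overline{x}=\mu_k$. The terms $\sum_k(y_k-y_k^\dagger)\int\beta_k(x-x^\dagger)$ are controlled using the monotonicity of $s\mapsto -(e^{-s}-1)$, i.e. the inequality $(e^{-z}-1)z\le0$ applied to $z=\sum_k\int_0^a\beta_k(y_k-y_k^\dagger)$. I expect the derivative to be exactly $-\int x^\dagger(a)\,\partial_a[\cdots]$, producing $-\int \rho(a)\,g(\cdot)\le0$. Item 1 uses $\gamma_{i\to j}\le1$ to sign the $y_j$ term; item 2 requires $\gamma$'s $>1$ only for existence of $E_3$ and for persistence.

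\textbf{Well-definedness via persistence; this is the main obstacle.} $V$ involves $\ln y_k$ and $\ln x$, so it is defined only where these are bounded away from $0$ and $\infty$. I would first show uniform persistence of $y_i$ (item 1), respectively of both $y_1,y_2$ (item 2), in $X_1\cap X_2$. This uses \cite{SmithThieme2011,HaleWaltman89}: the boundary attractor consists of $E_0,E_1,E_2$, and each is weakly repelling in the interior because the relevant invasion numbers exceed $1$. At $E_0$ both $\RR_{0,k}>1$; at $E_j$ one has $\gamma_{j\to i}>1$, and in item 2 also $\gamma_{i\to j}>1$. The acyclicity of the boundary flow follows from Theorems \ref{Thm:Attractiveness_E0} and \ref{Thm:Attractiveness_E1E2_1eq}. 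Persistence then gives a compact attractor $\A_S\subset S$ of bounded sets. On complete orbits in $\A_S$, $y_k\ge\eta>0$, and $x(t,a)$ is given entirely by the boundary term of \eqref{Eq:Duhamel}, so $x/x^\dagger\in[e^{-Ca},e^{Ca}]$. This makes all integrals finite and justifies differentiating $V$ along the orbits.

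\textbf{Conclusion.} The generalised LaSalle principle \cite[Theorem 2.52]{SmithThieme2011}, applied to $\A_S$, gives that omega-limit sets lie in the largest invariant subset of $\{\dot V=0\}$. There $x\equiv x^\dagger$, which in item 1 forces $y_j\equiv0$ (if $\gamma_{i\to j}<1$) and then $y_i=y_i^*$. In the borderline case $\gamma_{i\to j}=1$ the invariance argument combined with persistence excludes $y_j>0$ unless $\beta$'s are proportional, which Lemma \ref{Lemma: CNS_eq_inf} rules out unless both $\gamma$'s equal $1$. In item 2, $x\equiv\overline{x}$ forces $\sum_k\beta_k(y_k-\overline{y_k})=0$ a.e. together with the $y$-equations, yielding $E_3$ and its uniqueness. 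Stability follows from $V$ being a Lyapunov functional near $E^\dagger$ within the attractor, combined with the local asymptotic stability of compact attractors (as in Corollary \ref{Cor:attractor}).
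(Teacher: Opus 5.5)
There is a genuine gap at the core: you never show that your functional decreases. Your first candidate, with $\psi\equiv1$, is exactly the paper's $L_i$ (item 1) and $L_3$ (item 2), and it works. You discarded it because of an algebra slip.

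In $\int_0^\infty(1-x^\dagger/x)\,\partial_t x\,\d a$, split $\mu_x+\sum_k\beta_k y_k=(\mu_x+\sum_k\beta_k y_k^\dagger)+\sum_k\beta_k(y_k-y_k^\dagger)$. The interaction part is $-(1-x^\dagger/x)\,x\sum_k\beta_k(y_k-y_k^\dagger)=-(x-x^\dagger)\sum_k\beta_k(y_k-y_k^\dagger)$. There is no extra factor $(1-x^\dagger/x)$: the factor $x$ of the mass-action term cancels the $1/x$ coming from $g'$, exactly as for Lotka--Volterra ODEs. The $x$-part of this term then cancels identically with the term $\sum_k(y_k-y_k^\dagger)\int_0^\infty\beta_k x\,\d a$ produced by $\frac{1}{c_k}W_k'(y_k)\,y_k'$.

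The remaining part is $(1-x^\dagger/x)(\partial_a x+(\mu_x+\sum_k\beta_k y_k^\dagger)x)=x^\dagger\,\partial_a g(x/x^\dagger)$. It integrates by parts with no boundary terms: at $a=0$ because $x(t,0)=\Lambda=x^\dagger(0)$, and at infinity because $|\ln(x/x^\dagger)|\le Ca$ on complete orbits while $x^\dagger$ decays exponentially. Altogether
\[
\frac{\d V}{\d t}=-\int_0^\infty\left(\mu_x(a)+\sum_k\beta_k(a)y_k^\dagger\right)x^\dagger(a)\,g\!\left(\frac{x(t,a)}{x^\dagger(a)}\right)\d a+\sum_k\left(y_k(t)-y_k^\dagger\right)\left(\int_0^\infty\beta_k(a)x^\dagger(a)\,\d a-\frac{\mu_k}{c_k}\right).
\]
The last sum vanishes in item 2, since $c_k\int_0^\infty\beta_k\overline{x}\,\d a=\mu_k$. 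In item 1 it equals $\frac{\mu_j}{c_j}(\gamma_{i\to j}-1)y_j\le0$, which is \eqref{Eq:L1prime}.

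The McCluskey-type history functional you fall back on is never specified: $\theta$ and $G_a$ are left undetermined, and ``I expect the derivative to be\ldots'' is not an argument. Moreover, the inequality $(e^{-z}-1)z\le0$ cannot sign the cross term. In $\sum_k(y_k(t)-y_k^\dagger)\int_0^\infty\beta_k x^\dagger(e^{-z}-1)\,\d a$, the instantaneous weight $\sum_k\beta_k(a)(y_k(t)-y_k^\dagger)$ is neither $z(t,a)$ nor $\partial_a z(t,a)$ when $z$ is built from the history $y_k(t-a+s)$. As written, then, the proposal contains no valid Lyapunov functional, and that is the heart of the proof.

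The rest of your outline follows the paper's route:
\begin{itemize}
\item uniform persistence via \cite[Theorem 8.20]{SmithThieme2011}, with boundary limits $E_0$ and $E_j$ weakly repelling thanks to $\RR_{0,i}>1$ and $\gamma_{j\to i}>1$ (Proposition \ref{Prop:Lyap_welldef});
\item well-posedness and differentiability on bounded complete orbits via the boundary term of \eqref{Eq:Duhamel} (Lemmas \ref{Lemma:Lyap_welldef_Orbit} and \ref{Lemma:Flow_C1_Orbit});
\item identification of the orbits on which the functional is constant, with non-proportionality of $\beta_1,\beta_2$ coming from $\gamma_{i\to j}\gamma_{j\to i}\neq1$ (or $\gamma_{j\to i}\neq1$ in the borderline case);
\item stability via \cite[Theorem 2.39]{SmithThieme2011}.
\end{itemize}

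Two corrections to that part. First, in item 1 only $y_i$ persists, since $E_i\notin S$. So the attractor you use is not contained in $S$, and ``$y_k\ge\eta$ for all $k$'' is false. What you need, and what suffices because $W_j$ is linear, is $\omega(K)\subset\{y_i\ge\ep\}\subset X_i$ for compact $K\subset S$. Second, for stability you must conclude $\omega(K)=\{E^\dagger\}$ for every compact $K\subset S$, not merely that each $\omega(v)$ lies in the largest invariant subset of $\{\dot V=0\}$. The paper does this via the $\alpha$- and $\omega$-limits of complete orbits inside $\omega(K)$, as in the proof of Theorem \ref{Thm:Attractiveness_E0}.
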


\begin{theorem}[Asymptotic stability with two non-trivial equilibria : critical case]\label{Thm:Attractiveness_continuum}
Suppose $\min_{k\in\{1,2\}}\{\RR_{0,k}\}>1$ and $\gamma_{1\to 2}=\gamma_{2\to 1}=1$. Then the continuum set of equilibria $E_\infty$ (defined in Proposition \ref{Prop:Positive_equilibria}) is globally asymptotically stable in  $S=\cap_{k\in\{1,2\}} X_k$. Moreover, for any $v=(x_0,y_{1,0},y_{2,0}) \in\X_1\cap X_2$, $\Phi_t(v)$ converges to $E_{1+\sigma}:= (1-\sigma)E_1 +\sigma E_2$ where $\sigma$ is the only solution in $(0,1)$ of $$(1-\sigma)y_1^*=\dfrac{y_{1,0}}{y_{2,0}}(\sigma y_2^*)^{\mu_1/\mu_2}.$$ 
\end{theorem}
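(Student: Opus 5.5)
The proof rests on the structural identity of Lemma \ref{Lemma: CNS_eq_inf}. Since $\RR_{0,1},\RR_{0,2}>1$ and $\gamma_{1\to 2}=\gamma_{2\to 1}=1$, we have $c_2\mu_1\beta_2=c_1\mu_2\beta_1$ a.e. Plugging this into the $y_2$-equation gives
\begin{equation*}
\frac{y_2'}{y_2}=\frac{\mu_2}{\mu_1}\Big(c_1\int_0^\infty\beta_1 x\,\d a\Big)-\mu_2=\frac{\mu_2}{\mu_1}\,\frac{y_1'}{y_1}
\end{equation*}
along every orbit in $X_1\cap X_2$, which is positively invariant by Proposition \ref{Prop:Attractors}. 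Hence $Q(v):=y_1^{1/\mu_1}\,y_2^{-1/\mu_2}$ is a first integral on $X_1\cap X_2$.

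The plan has three steps, carried out in this order.
\begin{enumerate}
\item Show $\omega(v)\subset E_\infty$ for every $v\in X_1\cap X_2$, using a Lyapunov functional.
\item Select the limit point using the invariant $Q$.
\item Deduce stability.
\end{enumerate}

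The algebraic core of step 1 is as follows. The predation pressure collapses to a single scalar: $\beta_1y_1+\beta_2y_2=\beta_1 Y$, with $Y:=y_1+\frac{c_1\mu_2}{c_2\mu_1}y_2$. Every point of $E_\infty$ has $x=x_1^*$ and $Y=y_1^*$. Fix any reference point $(x_1^*,\overline{y_1},\overline{y_2})\in E_\infty$ and set $g(s)=s-1-\ln s$. I take the Volterra-type functional
\begin{equation*}
V=\int_0^\infty \psi(a)\,x_1^*(a)\,g\!\Big(\frac{x(a)}{x_1^*(a)}\Big)\d a+\sum_{i=1}^2\frac{\overline{y_i}}{c_i}\,g\!\Big(\frac{y_i}{\overline{y_i}}\Big),
\qquad \psi(a)=\int_a^\infty \beta_1(s)\,e^{-\int_a^s(\mu_x+y_1^*\beta_1)}\d s .
\end{equation*}
I then compute $\frac{\d}{\d t}V$ along classical solutions. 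An integration by parts in $a$ uses the boundary condition $x(t,0)=\Lambda=x_1^*(0)$. The equilibrium relations $\frac{c_i}{\mu_i}\int\beta_ix_1^*=1$ cancel the $Y$-dependent terms, because $\beta_2$ is proportional to $\beta_1$. This should leave
\begin{equation*}
\frac{\d V}{\d t}=-\int_0^\infty\beta_1(a)\,x_1^*(a)\,\psi(a)\,g\!\Big(\frac{x}{x_1^*}\Big)\d a\le 0,
\end{equation*}
possibly up to a positive factor in front of the integral. Equality holds only when $x=x_1^*$ on the relevant support. I expect the weight $\psi$ may need adjusting during this computation; the target is only the sign structure above.

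The main obstacle in step 1 is that $V$ is not defined on all of $X_+$. The terms $\ln(x/x_1^*)$ and $\ln y_i$ need $x$ bounded below and $y_i$ bounded away from zero. I will therefore apply $V$ only on $\omega(v)$, which is compact and invariant by Lemma \ref{Lemma:omega-limit}, using complete orbits inside it. On such an orbit, \eqref{Eq:Duhamel} shows that $x(t,\cdot)$ is given purely by the boundary term for all $t\in\R$. Since the $y_i$ are bounded by \eqref{Eq:MassMajoration}, this gives $x(t,a)\ge \Lambda e^{-Ca}$, with a comparable upper bound, so the logarithmic integrand is integrable.

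For the $y_i$, I need uniform persistence: $\liminf y_i(t)\ge\eta>0$. The only boundary equilibria are $E_0,E_1,E_2$.
\begin{itemize}
\item $E_0$ is repelling in $X_1\cap X_2$ because $\RR_{0,i}>1$.
\item Near $E_1$ or $E_2$, the invariant $Q$ forbids $y_j\to0$ while $y_i$ stays bounded away from zero, since $y_1\to0$ if and only if $y_2\to0$.
\end{itemize}
So the acyclicity and weak-repeller criteria of \cite{SmithThieme2011,HaleWaltman89} should apply. Once $V$ is defined and non-increasing on $\omega(v)$, the generalised LaSalle principle \cite[Theorem 2.52]{SmithThieme2011} puts $\omega(v)$ in the largest invariant set where $x\equiv x_1^*$. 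Invariance then forces $\beta_1Y(t)\equiv\beta_1y_1^*$. Hence $Y\equiv y_1^*$, and both $y_i$ are constant through $\frac{\d}{\d t}\ln y_i = c_i\int\beta_ix_1^*-\mu_i=0$. So $\omega(v)\subset E_\infty$.

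For step 2, $Q$ is continuous on $\{y_1,y_2>0\}$ and persistence keeps the $y_i$ away from $0$, so $Q$ is constant on $\omega(v)$ and equal to $Q(v)$. On $E_\infty$, the quantity $Q(E_{1+\sigma})$ is strictly monotone in $\sigma\in(0,1)$. One side of the defining equation, $(1-\sigma)y_1^*$, decreases from $y_1^*$ to $0$, while the other side increases from $0$. So the equation $Q(E_{1+\sigma})=Q(v)$, rewritten in the form given in the statement, has a unique root $\sigma$. Because $\omega(v)$ is connected and contained in $E_\infty\cap\{Q=Q(v)\}=\{E_{1+\sigma}\}$, it reduces to that single point. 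Attraction then follows from Lemma \ref{Lemma:omega-limit}.

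For step 3, global stability of the set $E_\infty$ follows from the same Lyapunov argument applied to compact subsets. Combining Corollary \ref{Cor:attractor} with the attractivity already shown, uniformly on compact sets bounded away from $\partial X_1\cup\partial X_2$ thanks to persistence, gives local asymptotic stability of $E_\infty$.
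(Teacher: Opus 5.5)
The persistence step fails as you state it. In this critical case every $E_{1+\sigma}$, $\sigma\in(0,1)$, is an equilibrium in $X_1\cap X_2$, and $\|E_{1+\sigma}-E_1\|_X=\sigma(y_1^*+y_2^*)\to0$ as $\sigma\to0$ (symmetrically for $E_2$). So no $\eta>0$ gives $\liminf_{t\to\infty}y_i(t)\ge\eta$ for all $v\in X_1\cap X_2$. Moreover $E_1$ and $E_2$ are not uniformly weakly repelling in $X_1\cap X_2$, so the acyclicity/weak-repeller theorems of \cite{SmithThieme2011,HaleWaltman89} cannot be applied to $\min(y_1,y_2)$. Your remark that $Q$ prevents $y_j\to0$ near $E_i$ only yields a non-uniform statement. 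What is true, and enough, is the paper's ingredient: uniform $\rho_{1+2}$-persistence, where only $E_0$ has to repel (Proposition \ref{Prop:Lyap_welldef}, item 5). Combine it with your invariant $y_1^{1/\mu_1}=Q(v)\,y_2^{1/\mu_2}$ along the orbit. Then $\liminf_{t\to\infty}(y_1+y_2)\ge\ep$ gives $\liminf_{t\to\infty}y_i>0$ for each $i$, with a bound depending only on $\ep$ and $Q(v)$, hence uniform over any compact $K\subset X_1\cap X_2$. That is what places $\omega(v)$, and $\omega(K)$, inside $X_1\cap X_2$, so that a functional with $\ln y_i$ terms is finite there.

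The functional also needs repair. With your weight one finds
\[
\frac{\d V}{\d t}=-\int_0^\infty\beta_1x_1^*\,g\Big(\frac{x}{x_1^*}\Big)\d a+(Y-y_1^*)\int_0^\infty(1-\psi)\,\beta_1\,(x-x_1^*)\,\d a,
\]
and the second term has no sign. Take $\psi\equiv1$. Since $(x_1^*)'=-(\mu_x+y_1^*\beta_1)x_1^*$, $\overline{y_1}+\frac{c_1\mu_2}{c_2\mu_1}\overline{y_2}=y_1^*$ at every reference point of $E_\infty$, and $x(t,0)=x_1^*(0)=\Lambda$, the cross terms cancel. You get $\frac{\d V}{\d t}=-\int_0^\infty(\mu_x+y_1^*\beta_1)x_1^*g(x/x_1^*)\,\d a$, the same as \eqref{Eq:L1prime} with $\gamma_{1\to2}=1$, which forces $x=x_1^*$ a.e.\ and not only on $\supp(\beta_1)$.

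$V$ need not be finite along the forward orbit of $v$, since nothing bounds $x_0$ from below. So LaSalle must be run on complete orbits inside $\omega(v)$. Because $V$ is strict for the whole continuum $E_\infty$, this only shows that the $\alpha$- and $\omega$-limits of such orbits lie in $E_\infty$; heteroclinic connections between distinct points of $E_\infty$ must still be excluded. The paper does this with $L_1$ and $L_2$, which are monotone in opposite directions along $E_\infty$. Your $Q$ works equally well: it is constant on each complete orbit in $X_1\cap X_2$ and strictly monotone along $E_\infty$, so each complete orbit in $\omega(v)$ is homoclinic to a single $E_{1+\sigma}$, hence constant. One functional plus the first integral is therefore a valid alternative to the paper's pair.

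For stability, Corollary \ref{Cor:attractor} is not the relevant tool, and $E_\infty$ is not compact. The paper applies \cite[Theorem 2.39]{SmithThieme2011} to the closed segment $E_\infty\cup\{E_1,E_2\}$ after proving it attracts compact subsets of $X_1\cup X_2$; this is where $L_1$ and $L_2$, finite at $E_1$ and $E_2$, are needed rather than a functional centred in $E_\infty$.

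Finally, $Q(E_{1+\sigma})=Q(v)$ reads $(1-\sigma)y_1^*=y_{1,0}\,(\sigma y_2^*/y_{2,0})^{\mu_1/\mu_2}$. This coincides with the displayed equation only when $\mu_1=\mu_2$, because the paper's \eqref{Eq:Relation_y1_y2} drops the exponent on $y_{2,0}$. So do not claim your equation rewrites into the stated form.
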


One can note that the statements in Theorems \ref{Thm:Attractiveness_E0}, \ref{Thm:Attractiveness_E1E2_1eq}, \ref{Thm:AttractivenessR0>1} and \ref{Thm:Attractiveness_continuum} could actually be rewritten in a  stronger sense since in the proofs we show that the mentionned attractor $E$ ($E=E_i$ with $i=0,1,2,3$ or $\infty$ depending on the statement) actually attracts every compact subset $K$ of $S$, which is equivalent (\cite[Lemma 2.6]{SmithThieme2011}) to the fact that $d(\Phi_t(x),\{E\}) \to 0$ as $t\to +\infty$ uniformly for $x\in K$.

We can also note that if each competitor is able to survive in the absence of the other ($\RR_{0,i}>1$ for $i\in\{1,2\}$), then at least one competitor always survive, \textit{i.e.} the competition cannot drive both population to extinction.

Those results are summarized in Tables \ref{Table:R0<1_or_1competitor} and \ref{Table:R0>1_interior}.

\begin{table}[htbp]
\begin{center}
\begin{tabular}{|c|c|c|c|c|}
    \hline
    & $\partial  X_1\cap \partial  X_2$ & $\partial  X_1\cap  X_2$ & $ X_1\cap \partial  X_2$ & $ X_1\cap  X_2$ \\
   \hline
   $\max\{\RR_{0,1},\RR_{0,2}\}\leq 1$ & $E_0$ & $E_0$ & $E_0$ & $E_0$\\
   \hline
   $\RR_{0,1}>1\geq \RR_{0,2}$ & $E_0$ & $E_0$ & $E_1$ & $E_1$ \\
   \hline
   $\RR_{0,1}\leq 1<\RR_{0,2}$ & $E_0$  & $E_2$ & $E_0$  & $E_2$ \\
   \hline 
   $\min\{\RR_{0,1}, \RR_{0,2}\}> 1$ & $E_0$ & $E_2$ & $E_1$ & see table \ref{Table:R0>1_interior}\\
   \hline
\end{tabular}
\end{center}
\caption{Convergence of the solution of \eqref{Eq:model} depending on the basic reproduction numbers and the subset of initial conditions in $X_+$. Results come from Theorems \ref{Thm:Attractiveness_E0} and \ref{Thm:Attractiveness_E1E2_1eq}.}\label{Table:R0<1_or_1competitor}
\end{table}

\begin{table}[htbp]
\begin{center}
\begin{tabular}{|c|c|c|c|}
    \hline
   & $\gamma_{1\to 2}<1$ & $\gamma_{1\to 2}=1$ & $\gamma_{1\to 2}>1$\\
   \hline
   $\gamma_{2\to 1}<1$ & \cellcolor{gray!40} & \cellcolor{gray!40} & $E_2$ \\
   \hline
   $\gamma_{2\to 1}=1$ & \cellcolor{gray!40} & $E_{1+\sigma}\in E_\infty$   &   $E_2$\\
   \hline
   $\gamma_{2\to 1}>1$ &   $E_1$ & $E_1$ & $E_3$  \\
   \hline 
\end{tabular}
\end{center}
\caption{Convergence of the solution of \eqref{Eq:model} from any initial condition in $  X_1\cap X_2$ in the case $\min\{\RR_{0,1}, \RR_{0,2}\}> 1$.  Attractor results come from Theorems \ref{Thm:AttractivenessR0>1} and \ref{Thm:Attractiveness_continuum}, while Proposition \ref{Prop:Lyap_welldef} states that three grayed-out configurations are never met.}\label{Table:R0>1_interior}
\end{table}

\subsection{Examples and numerical simulations}
\begin{exemple}\label{Exp:analytic}
We assume that $\mu_1=\mu_2=\mu$, $c_1=c_2=c$,
$$\beta_1(a)=\mathbf{1}_{[\underline{\beta},\overline{\beta}]}(a)\widetilde{\beta}(a), \quad \beta_2(a)=\beta_1(a+\alpha), \quad \text{a.e. } a\geq 0$$
where $0\leq \underline{\beta}<\overline{\beta}$, $\alpha\in(-\infty,\underline{\beta})$ and $\widetilde{\beta}\in L^1_+(\R_+)$. Then we can compute with \eqref{Eq:DefR0i} the quantities
$$\RR_{0,1}=\dfrac{\Lambda c}{\mu}\int_{\underline{\beta}}^{\overline{\beta}}\widetilde{\beta}(a)e^{-\int_0^a \mu_x(s)\d s}\d a, \quad \RR_{0,2}(\alpha)=\dfrac{\Lambda c}{\mu}\int_{\underline{\beta}}^{\overline{\beta}}\widetilde{\beta}(a)e^{-\int_0^{a-\alpha} \mu_x(s)\d s}\d a.$$
It follows that the function $\alpha\longmapsto \RR_{0,2}(\alpha)$ is increasing, with $\RR_{0,2}(0)=\RR_{0,1}$. From \eqref{Eq:Def_y^*} we see that
$$\dfrac{\Lambda c}{\mu} \int_0^\infty \beta_1(a)e^{-\int_0^a \mu_x(s)\d s}e^{-y_1^* \int_0^a \beta_1(s)\d s} \d a=1=\dfrac{\Lambda c}{\mu} \int_0^\infty \beta_2(a)e^{-\int_0^a \mu_x(s)\d s}e^{-y_2^*(\alpha)\int_0^a \beta_2(s)\d s}\d a$$
which is equivalent to
$$\int_{\underline{\beta}}^{\overline{\beta}} \widetilde{\beta}(a)e^{-\int_0^a \mu_x(s)\d s}e^{-y_1^* \int_{\underline{\beta}}^a \widetilde{\beta}(s)\d s} \d a=\int_{\underline{\beta}}^{\overline{\beta}} \widetilde{\beta}(a)e^{-\int_0^{a-\alpha} \mu_x(s)\d s}e^{-y_2^*(\alpha)\int_{\underline{\beta}}^a \widetilde{\beta}(s)\d s}\d a.$$
Since the function $\alpha\longmapsto e^{-\int_0^{a-\alpha}\mu_x(s)\d s}$ is increasing for every $a\geq 0$ then the function $\alpha \longmapsto y_2^*(\alpha)$ is necessarily increasing, with $y_2^*(0)=y_1^*$. Finally \eqref{Eq:Def_gamma} allows us to compute 
$$\gamma_{2\to 1}(\alpha)=\dfrac{\Lambda c}{\mu}\int_{\underline{\beta}}^{\overline{\beta}}\widetilde{\beta}(a)e^{-\int_0^a \mu_x(s)\d s}e^{-y_2^*(\alpha)\int_{\underline{\beta}}^a \widetilde{\beta}(s)\d s}\d a$$
hence the function $\alpha \longmapsto \gamma_{2\to 1}(\alpha)$ is decreasing and
$$\gamma_{1\to 2}(\alpha)=\dfrac{\Lambda c}{\mu}\int_{\underline{\beta}}^{\overline{\beta}} \widetilde{\beta}(a)e^{-\int_0^{a-\alpha}\mu_x(s)\d s} e^{-y_1^*(\alpha)\int_{\underline{\beta}}^{\min\{a-\alpha,\overline{\beta}\}} \widetilde{\beta}(s)\d s}\d a$$
so the function $\alpha\longmapsto \gamma_{1\to 2}(\alpha)$ is increasing, with $\gamma_{1\to 2}(0)=\gamma_{2\to 1}(0)=1$.

This example proves that if $\beta_1$ and $\beta_2$ differ only by a translation, then only the first competitor to reach the resource can survive. Precisely, we see that if $\RR_{0,1}>1$ then when $\alpha=0$ we are in the case of Theorem \ref{Thm:Attractiveness_continuum}, that is the continuum set of equilibria $E_\infty$ exists and is GAS in $X_1\cap X_2$. When $\alpha>0$, then $\gamma_{1\to 2}(\alpha)>1>\gamma_{2\to 1}(\alpha)$ leading to the first case of Theorem \ref{Thm:AttractivenessR0>1} that is $E_2$ is GAS in $X_1\cap X_2$. On the contrary, if $\alpha<0$ then $\gamma_{1\to 2}(\alpha)<1<\gamma_{2\to 1}(\alpha)$ that is $E_1$ is GAS in $X_1\cap X_2$. Also, there exists $\underline{\alpha}<0$ such that for every $\alpha\leq \underline{\alpha}$ then $\RR_{0,2}(\alpha)\leq 1$ and $E_2$ ceases to exist.
\end{exemple}

\begin{exemple}\label{Exp:Bifurcation}
This example illustrates (numerically) a case  where the globally asymptotically stable (attractive in $X_1\cap X_2$) steady state bifurcate from $E_1$ to $E_3$ and then to $E_2$. As in the previous example, we assume that the two competitors  differ only in their respective function $\beta_i$, $i=1,2$, defined as
\begin{equation}\beta_1(a)=\frac{1}{5}\mathbf{1}_{[2,7]}(a), \quad \beta_2(a)=\frac{1}{10}\left(\mathbf{1}_{[0,\alpha]}(a) \frac{a}{\alpha} + \mathbf{1}_{(\alpha,20]}(a) \left(\frac{20-a}{20-\alpha}\right)\right) , \quad \text{a.e. } a\geq 0 \label{Eq:ExampleBeta}\end{equation}
where $\alpha \in [0,30]$. That is,  $\beta_1$ and $\beta_2$ both have unit $L^1-$norm, with $\beta_1$ being a gate function while $\beta_2$ is a triangle shaped function with mode $\alpha$. 

As illustrated on Figures \ref{Fig:Diagram} and \ref{Fig:Simulations}, as the mode $\alpha$ of $\beta_2$ varies, system \eqref{Eq:model} successively undergoes two transcritical bifurcations where $E_2$ and $E_1$ respectively merge and exchange their stability with $E_3$ at $\alpha\approx 9.6$ and $11.8$ respectively, while $E_3$ leaves the positive cone before and after those bifurcations.
\begin{figure}[h]
\begin{center}
\includegraphics[scale=0.8]{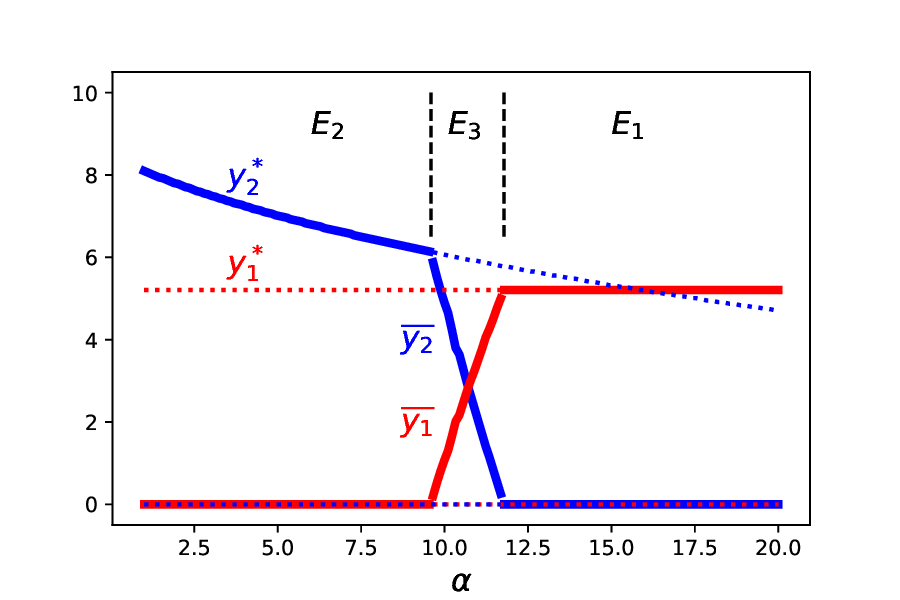}
\end{center}\caption{Bifurcation diagram for Example \ref{Exp:Bifurcation}. Only the  $y_1$ (red) and $y_2$ (blue) coordinates are depicted.  wide and thin lines correspond to stable and unstable equilibrium coordinates. The upper vertical dotted lines separate three intervals for $\alpha$ in which the global attractor in $X_1\cap X_2$ is $E_2,\ E_3$ or $E_1$. Parameters are (arbitrarily) set as follow : $c_1=c_2=\Lambda=1$, $\mu_1=\mu_2=0.1$ and $\mu_x\equiv 0.1$.}\label{Fig:Diagram}
\end{figure}

\begin{figure}[h]
\begin{center}
\includegraphics[scale=0.5]{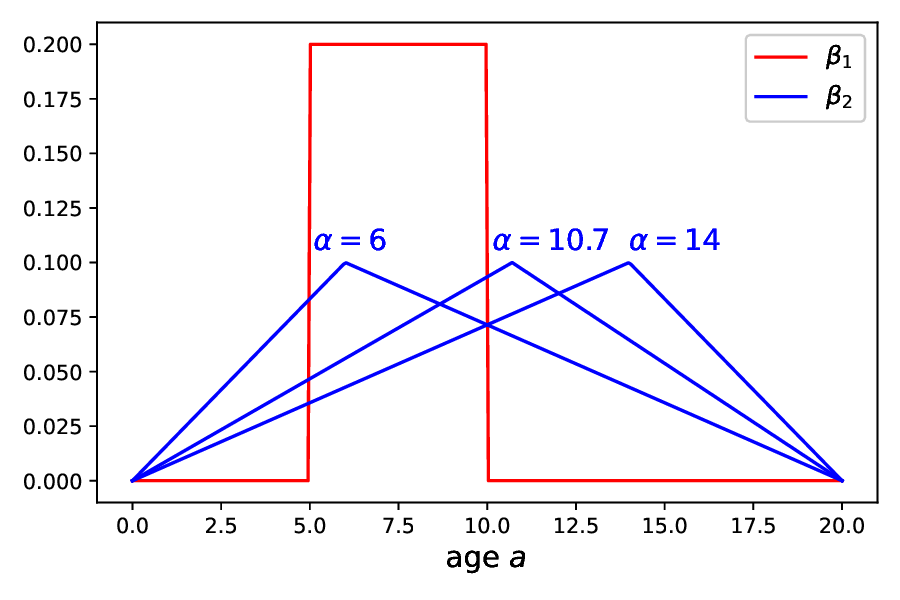}
\includegraphics[scale=0.5]{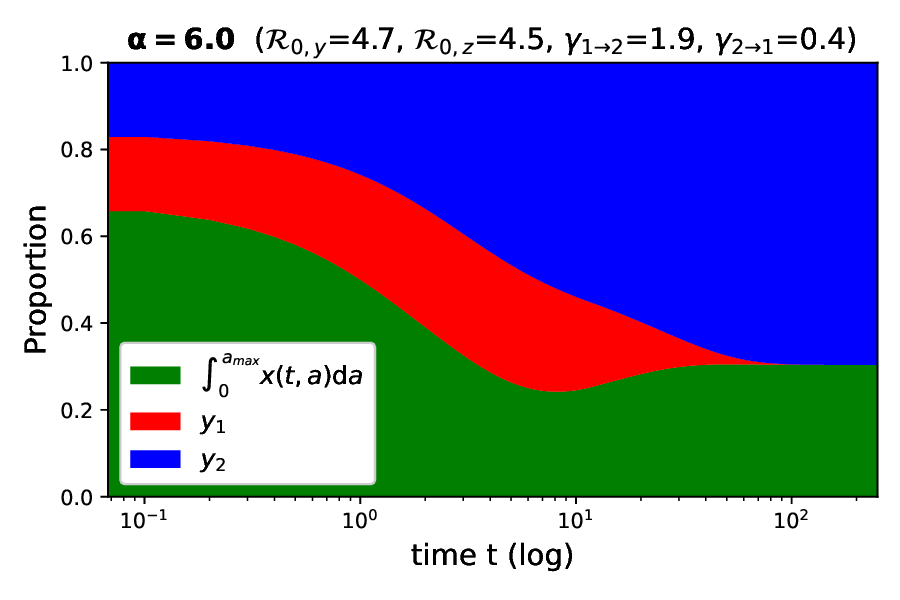}
\includegraphics[scale=0.5]{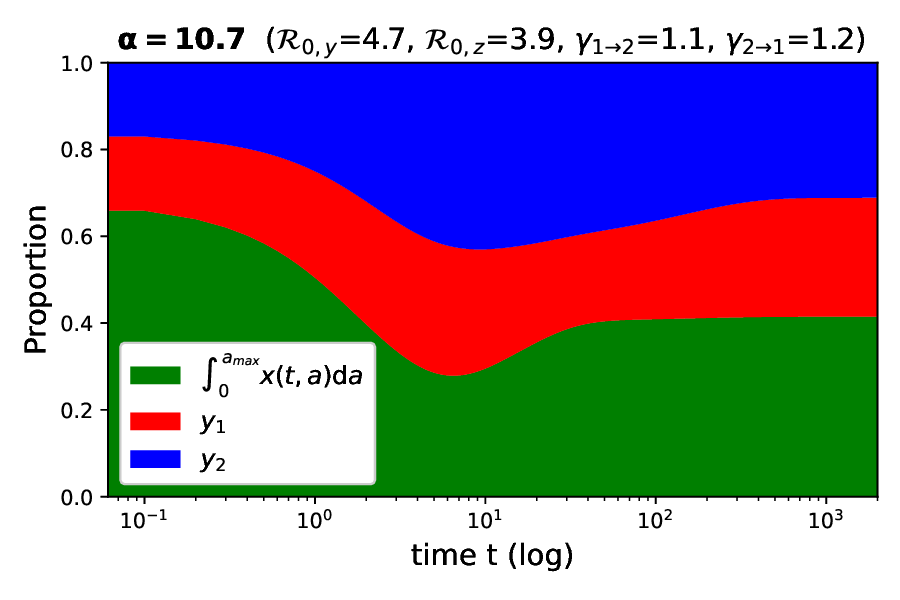}
\includegraphics[scale=0.5]{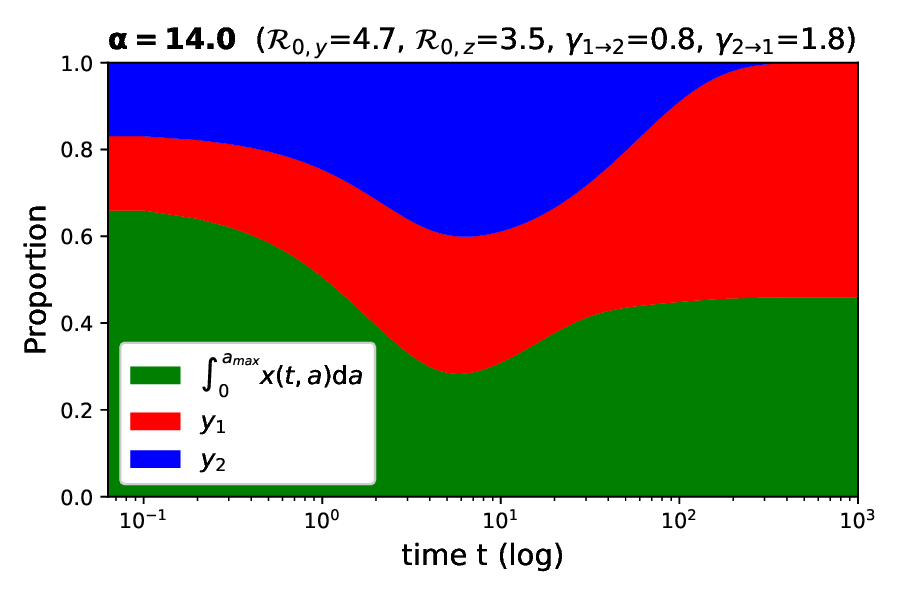}
\end{center}\caption{Top left) graphs of the functions $\beta_1$ and $\beta_2$ as defined in \eqref{Eq:ExampleBeta} for $\alpha=6,\ 10.7$ and $14$, corresponding to three areas of the bifurcation diagram in Figure \ref{Fig:Diagram}. Top right, bottom left and right) Numerical simulations for $\alpha=6,\ \alpha=10.7$ and $\alpha=14$ of the relative proportions of $y_1$, $y_2$ and $\int_0^{a_{\max}}x(t,a)\d a$, with $a_{\max}=20$, $x_0=e^{-0.05a}$ and $y_0=z_0=3$.}\label{Fig:Simulations}
\end{figure}

\end{exemple}

\section{Lyapunov functionals}\label{SectionProof_Lemma: CNS_eq_inf}

In this section, we introduce candidate Lyapunov functionals, that will be used in the next Section. Those functionals are not defined on the whole $X_+$ space, yet we prove that they are well-defined on all complete orbits (hence all invariant subsets) included in suitable sets  and that they are valid Lyapunov functionals.

\subsection{Definitions}

We first define the key function $g:\R_+^*\to \R_+$ by $g(z)=z-\ln(z)-1$, known as the Volterra Lyapunov function. Note that $g(z)=0$ if $z=1$ and $g(z)>0$ otherwise. Then we formally introduce the following functionals
$$L_0(x,y_1,y_2)=\int_0^\infty x_0^*(a)g\left(\frac{x(a)}{x_0^*(a)}\right)da+\frac{y_1}{c_1}+\frac{y_2}{c_2},$$
\begin{equation} L_i(x,y_1,y_2)=\int_0^\infty x_i^*(a)g\left(\frac{x(a)}{x_i^*(a)}\right)da+\frac{y_i^*}{c_i}g\left(\frac{y_i}{y_i^*}\right)+\frac{y_j}{c_j},\quad \forall i,j\in\{1,2\},\ j\neq i.\label{Eq:Def_Li}\end{equation}

In the case $\min(\RR_{0,1},\RR_{0,2})>1$ and  $\min(\gamma_{2\to 1},\gamma_{1\to 2})>1$, we know from Proposition \ref{Prop:Positive_equilibria} (first item) that there exists a positive equilibrium $E_3=(\overline{x},\overline{y_1},\overline{y_2})$. We define the associated functional
$$L_3(x,y_1,y_2)=\int_0^\infty \overline{x}(a)g\left(\frac{x(a))}{\overline{x}(a)}\right)\d a+\overset{2}{\underset{i=1}{\sum}}\frac{\overline{y_i}}{c_i} g\left(\frac{y_i}{\overline{y_i}}\right).$$
The domain of definition $D_{L_0}$, $D_{L_1}$, $D_{L_2}$ and $D_{L_3}$ of those functionals will be investigated in Lemma \ref{Lemma:Lyap_welldef_Orbit}. We remind the following definition
\begin{definition}[Lyapunov functional]
Let $S$ be a forward invariant subset of $X_+$. A function $L:S\to \R$ is called a Lyapunov functional on $S$ if:
\begin{itemize}
    \item $L$ is continuous on $S$;
    \item the function $\R_+\ni t\longmapsto L(\Phi_t(v))$ is non-increasing for every $v\in S$.
\end{itemize}
If moreover $E=\{v\in S\ : \ \R_+\ni t\longmapsto L(\Phi_t(v)) \text{ is constant }\}\neq \emptyset$, we say that $L$ is strict for $E$.

\end{definition}

\begin{definition}[Complete orbit]
A set $\gamma\subset   X_+$ is called a complete orbit for system \eqref{Eq:model} if there exists $\Psi:\mathbb{R}\to X_+$ such that $\gamma=\{\Psi(s),\ s\in \R\}$ and $\forall (t,s)\in\R_+\times \R$, $\Phi_t(\Psi(s))=\Psi(t+s)$. If $\eta\in\gamma$, we say that $\gamma$ is a complete orbit through $\eta$ and, without loss of generality, we can consider that $\Psi(0)=\eta$.
\end{definition}

Moreover, if $\{\Psi(s),\ s\in \R\}$ is a complete orbit through $\eta$, we define the set
$$\alpha(\eta)=\bigcap_{\tau \geq 0}\overline{\{\Psi(-t), \ t\geq \tau\}}.$$

\subsection{Well-posedness of Lyapunov functionals}

For $L_0$, $L_1$, $L_2$ and $L_3$ to be well defined and to be Lyapunov functionals, the components $(x,y_1,y_2)$ must satisfy positive and regularity properties. The following lemma states that those properties are typically satisfied on complete orbits, since it imposes that the characteristic solution originates from the positive boundary condition. It will be used in Section \ref{Sec:GlobalAnalysis} to prove the well-posedness of each functional $L_i$ for $i\in\llbracket 0,3\rrbracket$  on omega-limit sets (Propositions \ref{Prop:Lyap_welldef_1eq} and \ref{Prop:Lyap_welldef}) and to characterize the basins of attraction of all equilibria.

\begin{lemma}[Well-posedness of Lyapunov functionals on bounded complete orbits]\label{Lemma:Lyap_welldef_Orbit}\mbox{} 

\begin{enumerate} 
    \item Denote $\OO_+$ the set of all bounded and complete orbits included in $X_+$, then $\OO_+\subset D_{L_0}$, in particular $E_0\in  D_{L_0}$.  Moreover,  suppose that one of the following assumptions hold : 
    \begin{enumerate}
        \item $\max(\RR_{0,1},\RR_{0,2})\leq 1$, $S=X_+$,
        \item $\RR_{0,1}\leq 1$ and $S= \partial X_2$,
        \item $\RR_{0,2}\leq 1$ and $S= \partial X_1$,
        \item $S= \partial X_1\cap \partial X_2$,
    \end{enumerate}
    then $L_0$ is a Lyapunov functional on $\OO_+\cap S$, strict for $\{E_0\}$.
    \item Let $i,j\in\{1,2\}$ with $i\neq j$. Denote $\OO_i$  the set of all bounded and complete orbits included in $X_i$. If $\RR_{0,i}>1$, then $\OO_i\subset D_{L_i}$. In particular, $E_i\in D_{L_i}$. Moreover, suppose that one of the following assumptions holds : 
    \begin{enumerate}
        \item $\gamma_{i\to j} < 1$ and  $S=X_i$,
        \item $S= X_i\cap\partial X_j$,
        \item $\RR_{0,j}>1$, $\gamma_{i\to j} = 1$,  $\gamma_{j\to i}\neq 1$ and  $S=X_i$,
        \item $\RR_{0,j}>1$, $\gamma_{i\to j} = \gamma_{j\to i} = 1$ and  $S=X_i$,
    \end{enumerate}
    then $L_i$ is a Lyapunov functional on $\OO_i\cap S$ and $L_i$ is strict for $\{E_i\}$ if $(a)$, $(b)$ or $(c)$ is true and strict for $E_\infty\cup \{E_i\}$ if $(d)$ is true. 
    
    \item If $\min(\RR_{0,1},\RR_{0,2})> 1$, $\gamma_{2\to 1}>1$ and $\gamma_{1\to 2}>1$, then $E_3$ (as defined in Proposition \ref{Prop:Positive_equilibria}) is the only equilibrium in $X_1\cap X_2$ and $L_3$ is a well-defined Lyapunov functional on $\OO_1\cap\OO_2\subset D_{L_3}$, strict for $\{E_3\}$.

\end{enumerate}
\end{lemma}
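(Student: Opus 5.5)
The plan is to exploit the structure of complete orbits: along $\Psi(s)=(x(s,\cdot),y_1(s),y_2(s))$, $s\in\R$, every age $a$ is reached by a characteristic that starts at the boundary. Since the orbit is defined for all negative times, the formula \eqref{Eq:Duhamel} with the initial time shifted to $s-t$ and $t\to\infty$ (the first term is bounded by $\|x(s-t)\|_{L^1}$ times a decaying factor and boundedness of the orbit kills it) gives, for every $s\in\R$ and a.e. $a\ge0$,
\begin{equation*}
x(s,a)=\Lambda e^{-\int_0^a\left(\mu_x(\sigma)+\sum_{k=1}^2\beta_k(\sigma)y_k(s-a+\sigma)\right)\d\sigma}.
\end{equation*}
Boundedness of the orbit gives $y_k\le M$. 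Hence $x_0^*(a)e^{-M\int_0^a(\beta_1+\beta_2)}\le x(s,a)\le x_0^*(a)$.

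\textbf{Well-definedness.} For $L_0$ we get $x/x_0^*\in[e^{-M\int_0^a(\beta_1+\beta_2)},1]$, and $g(z)\le z+|\ln z|$. Then $x_0^* g(x/x_0^*)\le x+x_0^*M\int_0^a(\beta_1+\beta_2)\le x+M(\|\beta_1\|_\infty+\|\beta_2\|_\infty)\,a\Lambda e^{-\mu_0a}$, which is integrable.

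For $L_i$ on orbits in $X_i$ we compare with $x_i^*$. We have $\ln(x/x_i^*)=-\int_0^a\bigl(\beta_i(y_i(\cdot)-y_i^*)+\beta_j y_j\bigr)$, which is bounded by $C a$. The same bound then gives integrability, and $y_i>0$ makes $g(y_i/y_i^*)$ finite. The same applies to $L_3$ with $\overline{x}$.

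\textbf{Continuity.} Continuity of $L$ along orbits (in the $L^1$ topology restricted to $\OO$) follows by dominated convergence, using the uniform bounds $|\ln(x/x^*)|\le Ca$ valid on the whole orbit set.

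\textbf{Derivative computation.} Differentiating along orbits, I would use the standard trick of writing $x(s+h,a+h)$ along characteristics and computing the derivative of $\int x^*g(x/x^*)$. Integration by parts in $a$ uses $x(s,0)=x^*(0)=\Lambda$ and the decay at infinity. The result is
\begin{equation*}
\frac{\d}{\d s}\int_0^\infty x^*g\!\left(\frac{x}{x^*}\right)\d a=-\int_0^\infty \left(x-x^*\right)\sum_k\beta_k\,(y_k-y_k^{*})\,\d a,
\end{equation*}
where $x^*$ solves the same transport with $y^*$. The boundary term $x^*g(x/x^*)|_{a=0}=0$.

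Adding the $y$-parts, $\frac{y_k^*}{c_k}g(y_k/y_k^*)$ contributes $(y_k-y_k^*)\bigl(\int\beta_kx-\mu_k/c_k\bigr)$. Using $\frac{c_k}{\mu_k}\int\beta_kx^*=1$ for the components where $y_k^*>0$, one gets
\begin{equation*}
\frac{\d L}{\d s}=-\int_0^\infty x^*\,\frac{(x/x^*-1)^2}{\cdots}\ldots
\end{equation*}
More concretely, the cross terms cancel and one obtains $\frac{\d L}{\d s}=\sum_{k:\,y_k^*=0}\frac{\mu_k}{c_k}y_k(\gamma_k-1)$ plus a nonpositive term. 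The nonpositive term is $-\int x^*\phi(a)\bigl(e^{-\Theta(a)}-1\bigr)\Theta'(a)\,\d a$ type, with $\Theta=\ln(x^*/x)$, and it integrates to $-\int(\ldots)g(e^{\Theta})\le0$.

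I expect the \textbf{main obstacle} to be exactly this step. The transport term does not give a pointwise sign as in ODE cases. The key is that $\sum_k\beta_k(y_k-y_k^*)=\partial_a\ln(x^*/x)$ only instantaneously along a characteristic, with $y$ evaluated at shifted times. So I would instead use the Volterra structure $\frac{\d}{\d s}$ of $\int x^* g$ directly in the characteristic variables and show that the residual is $-\int_0^\infty \mu\text{-weighted}\ x^*\,g(x/x^*)$-type terms plus boundary contributions. In the cases where $y_j^*=0$ but $y_j>0$, the term $\frac{\mu_j}{c_j}y_j(\frac{c_j}{\mu_j}\int\beta_jx-1)$ must be handled by comparing $\int\beta_j x$ with $\int\beta_jx_i^*$, which leads to $\gamma_{i\to j}$. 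This is where hypotheses (a)–(d) enter, and case (c) needs the previously excluded configuration argument.

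\textbf{Strictness and uniqueness.} The derivative vanishes only if $x\equiv x^*$, which forces $y$ to be an equilibrium value. In case (d) this set is $E_\infty\cup\{E_i\}$ by Proposition \ref{Prop:Positive_equilibria}. Uniqueness of $E_3$ follows because any other positive equilibrium would be a constant complete orbit on which $L_3$ is constant, and strictness then forces it to equal $E_3$.
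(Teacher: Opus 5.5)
Your well-definedness bounds are fine, but the proposal has a genuine gap at its core step: the derivative of $L$ along complete orbits, where the formula you state is wrong. Differentiating $\int_0^\infty x^*g(x/x^*)\,\d a$ along the characteristic $h\mapsto(s+h,a+h)$ is a legitimate substitute for the paper's route (Lemma \ref{Lemma:Flow_C1_Orbit} plus $\partial_t x=-\partial_a x-(\mu_x+\sum_k\beta_ky_k)x$). But you must also differentiate the weight $x^*(a)$, and $(x^*)'=-(\mu_x+\sum_k\beta_ky_k^*)x^*$. This gives
\begin{equation*}
\frac{\d}{\d s}\int_0^\infty x^*g\Big(\frac{x}{x^*}\Big)\d a=-\int_0^\infty\Big(\mu_x+\sum_k\beta_ky_k^*\Big)x^*g\Big(\frac{x}{x^*}\Big)\d a-\int_0^\infty(x-x^*)\sum_k\beta_k(y_k-y_k^*)\,\d a,
\end{equation*}
with the boundary term vanishing because $x(s,0)=x^*(0)=\Lambda$. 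You dropped the first term. With your formula as written, the cross terms cancel against the $y$-parts and you get exactly $\frac{\d L}{\d s}=\sum_{k:\,y_k^*=0}\frac{\mu_k}{c_k}y_k(\gamma_k-1)$. That is identically zero for $L_3$, for $L_i$ in cases (b)--(d), and for $L_0$ on $\partial X_1\cap\partial X_2$, so no strictness could be derived. The ``main obstacle'' you describe is illusory. What is needed is not an identity for $\partial_a\ln(x^*/x)$ but one for the characteristic derivative, and $(\partial_s+\partial_a)\ln(x/x^*)=-\sum_k\beta_k(a)\bigl(y_k(s)-y_k^*\bigr)$ holds pointwise from the PDE itself, with no time shifts. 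The dissipation term then carries the weight $\mu_x+\sum_k\beta_ky_k^*\geq\mu_0>0$, and this is exactly what justifies ``derivative zero implies $x\equiv x^*$''. Your ``$-\int(\ldots)g(e^{\Theta})$'' leaves that weight unidentified.

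Second, even granting $x\equiv x^*$, your strictness step skips the argument that carries the content of items 2(c) and 3. For $L_3$ the derivative has no $y$-term, so its vanishing gives only $x\equiv\overline{x}$. The system then gives $y_1,y_2$ constant and $\sum_k\beta_ky_k=\sum_k\beta_k\overline{y_k}$ a.e., which does \emph{not} force $y=\overline{y}$. The missing idea is non-proportionality of the kernels. If $y\neq\overline{y}$, then $\beta_1=\delta\beta_2$ with $\delta\neq0$, and \eqref{Eq:Def_y^*}--\eqref{Eq:Def_gamma} give $\gamma_{1\to2}\gamma_{2\to1}=1$, contradicting $\gamma_{1\to2},\gamma_{2\to1}>1$. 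Without this, your uniqueness argument for $E_3$ is circular, since it invokes exactly this strictness. Case 2(c) is handled the same way. From $x\equiv x_i^*$ you get $y_i$ constant and $y_j\beta_j=(y_i^*-y_i)\beta_i$, so either the orbit is $\{E_i\}$ or $\beta_i=\delta\beta_j$. In the latter case $\gamma_{i\to j}=1$ forces $\delta=\frac{c_j\mu_i}{c_i\mu_j}$, which then gives $\gamma_{j\to i}=1$, a contradiction. Your appeal to a ``previously excluded configuration'' does not supply this. If it refers to Proposition \ref{Prop:Lyap_welldef}, it would be circular, since that proposition is proved using this lemma.
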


Remark that, from \eqref{Eq:Def_gamma}, the assumption $\gamma_{i\to j} < 1$ in Lemma \ref{Lemma:Lyap_welldef_Orbit} is always satisfied if   $\RR_{0,j}\leq 1$. To prove Lemma \ref{Lemma:Lyap_welldef_Orbit} we need to introduce the following Lemma.

\begin{lemma}[Differentiability of the semi-flow along the complete orbits]\label{Lemma:Flow_C1_Orbit}\mbox{}
If there exists a complete orbit through $v\in X_+$, then $t\longmapsto\Phi_t(v)\in C^1(\R_+,X)$, \text{i.e.} is continuously differentiable in $X$. 
\end{lemma}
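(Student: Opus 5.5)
Since a complete orbit through $v$ exists, the $x$-component of $\Phi_t(v)$ is given for all ages by the boundary term of \eqref{Eq:Duhamel}. I will exploit this to show the solution is a classical one, with $u(t)\in D(A)$ and $t\mapsto Au(t)+F(u(t))$ continuous.

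\textbf{Step 1: representation along the whole orbit.} Let $\Psi:\R\to X_+$ be the complete orbit with $\Psi(0)=v$, and write $\Psi(s)=(x(s,\cdot),y_1(s),y_2(s))$ for every $s\in\R$. The cocycle relation $\Phi_t(\Psi(s))=\Psi(t+s)$ lets me apply \eqref{Eq:Duhamel} starting from $\Psi(-\tau)$ for arbitrary $\tau>0$. For $a<t+\tau$ the boundary formula gives
$$x(t,a)=\Lambda\exp\Big(-\int_0^a\big(\mu_x(s)+\textstyle\sum_i\beta_i(s)y_i(t-a+s)\big)\d s\Big).$$
The initial-data term only contributes on $a\geq t+\tau$. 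Its $L^1$ norm is bounded by $\|\Psi(-\tau)\|e^{-\mu_0(t+\tau)}$, and Corollary \ref{Cor:attractor} together with \eqref{Eq:MassMajoration} shows that complete orbits lie in the attractor, hence are bounded. Letting $\tau\to\infty$ therefore shows that the boundary formula holds for all $a\geq0$ and all $t\in\R$. The $y_i$ are bounded on $\R$, and from the integrated equation \eqref{Eq:IntegratedExpression_y} they are continuous with
$$y_i'=y_i\Big(c_i\int\beta_i x\,\d a-\mu_i\Big),$$
so $y_i\in C^1(\R)$ as soon as $t\mapsto x(t,\cdot)\in L^1$ is continuous, which holds by continuity of the semiflow.

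\textbf{Step 2: regularity of $x$.} With $z_i(t,a)=\int_0^a\beta_i(s)y_i(t-a+s)\d s$, the formula becomes $x(t,a)=x_0^*(a)\exp\big(-\sum_i z_i(t,a)\big)$. Here $z_i$ is $C^1$ in $t$, with $\partial_t z_i=\int_0^a\beta_i(s)y_i'(t-a+s)\d s$. Moreover $x$ is absolutely continuous in $a$ along characteristics, with $x(t,0)=\Lambda$, and satisfies
$$\partial_t x+\partial_a x=-(\mu_x+\textstyle\sum_i\beta_iy_i)x$$
a.e. Since $y_i,y_i'$ are bounded and $x\le x_0^*\le\Lambda e^{-\mu_0 a}$, I obtain $|\partial_t x(t,a)|\le C\,a\,e^{-\mu_0 a}$, which is integrable. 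Dominated convergence then shows that $t\mapsto x(t,\cdot)$ is differentiable in $L^1$, with derivative $\partial_t x$. This derivative is continuous in $t$ by continuity of $y_i'$ and dominated convergence again. It follows that $x(t,\cdot)\in W^{1,1}$, with $\partial_a x=-\partial_t x-(\mu_x+\sum\beta_iy_i)x\in L^1$, so $u(t)\in D(A)$.

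\textbf{Step 3: conclusion.} Combining the two steps, $t\mapsto(x(t,\cdot),y_1(t),y_2(t))$ is $C^1$ from $\R$ (in particular $\R_+$) into $X$, which gives the lemma for $t\mapsto\Phi_t(v)=\Psi(t)$.

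The main obstacle is Step 1: the passage $\tau\to\infty$ and the uniform bound of the orbit. I will handle the bound through the attractor, and justify the cocycle use of \eqref{Eq:Duhamel} carefully in $L^1$. Step 2 also needs care with only $L^\infty$ coefficients, since the derivative holds a.e. rather than pointwise. I will phrase it through the explicit formula and dominated convergence rather than through pointwise differentiation.
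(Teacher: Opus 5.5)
Your route differs from the paper's. The paper works only at the single state $v$. From the complete orbit it gets $x(a)=\Lambda e^{-G(a)}$ for all $a\geq 0$, and from this it deduces $x\in W^{1,1}(\R_+)$ and $x(0)=\Lambda$, i.e.\ $\tilde v\in D(A)$ and $A\tilde v+F(\tilde v)\in\X_0$. The $C^1$ regularity of $t\mapsto\Phi_t(v)$ is then imported from the abstract result \cite[Theorem 5.6.6]{MagalRuan2018} on classical solutions for compatible data, which also uses $F\in C^1$. You instead differentiate the representation formula directly along the whole orbit and use dominated convergence in $L^1$. The paper's argument is short and needs no control of $\partial_t x$, but it rests on the integrated-semigroup machinery. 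Yours is elementary and self-contained, and it gives the derivative explicitly. It also shows $\Phi_t(v)\in D(A)$ for every $t$ through the transport identity $\partial_a x=-\partial_t x-(\mu_x+\sum_i\beta_i y_i)x$. This is more robust than the paper's way of obtaining $x\in W^{1,1}$ via monotonicity of $x$ in $a$, which is not automatic when the $y_i$ vary along the orbit.

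One justification in your proof is wrong, however. Complete orbits need not be bounded, so they need not lie in $\A$; the attractor consists exactly of the bounded complete orbits. The differential inequality behind \eqref{Eq:MassMajoration} only gives a dichotomy along a complete orbit $\Psi$: either $\|\Psi(t)\|_X\le \overline{c}\Lambda/\underline{\mu}$ for all $t\in\R$, or $\|\Psi(t)\|_X\to\infty$ as $t\to-\infty$. The second alternative does occur. For instance, with $\beta_1,\mu_x$ constant and $y_2\equiv 0$, the problem reduces to a planar ODE for $(\int_0^\infty x\,\d a,\,y_1)$, which has complete orbits with $y_1(t)\to+\infty$ as $t\to-\infty$. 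On such orbits $y_i'$ is unbounded on $(-\infty,0]$, so your dominating function $C a e^{-\mu_0 a}$ is not available.

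Step 1 does not actually need boundedness: for fixed $(t,a)$ just take $\tau>a-t$, since the initial-data term lives on $[t+\tau,\infty)$. Step 2 is also easily repaired. The representation gives $\int_0^\infty\beta_i x(t,a)\,\d a\le\|\beta_i\|_{L^\infty}\Lambda/\mu_0$, hence $|y_i'|\le K y_i$ with $K$ independent of $t$, hence $|\partial_t z_i|\le K z_i$. With $Z=z_1+z_2$ this yields
\[
|\partial_t x(t,a)|\le K\,x_0^*(a)\,Z(t,a)\,e^{-Z(t,a)}\le \frac{K}{e}\,x_0^*(a).
\]
This is an integrable bound, uniform in $t$, and it serves both of your dominated-convergence arguments. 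Alternatively, state the lemma for bounded complete orbits, which is all the paper ever uses.
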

\begin{proof}[Proof of Lemma \ref{Lemma:Flow_C1_Orbit}]\mbox{}
Set $v=(x,y_1,y_2)\in X_+$ and $\tilde{v}=((0,x),y_1,y_2)\in \X_{0+}$. \cite[Theorem 5.6.6, p. 242]{MagalRuan2018} states that if $\tilde{v}\in D(A)$ and $A\tilde{v}+F(\tilde{v})\in \X_0$, then $t\longmapsto \Phi_t(v)$ is continuously differentiable in $L^1(\R_+,\R_+)\times\R_+^2$. This theorem also requires $F$ to be continuously differentiable, which is readily verified as each term in the expression of $F$ is a bilinear form.
 
Here, we prove that if there exists a complete orbit $\Psi=(\Psi^x,\Psi^1,\Psi^2)$ through $v$, then necessarily $\tilde{v}\in D(A)$ and $A\tilde{v}+F(\tilde{v})\in \X_0$. Indeed, let $\Psi$ be  a complete orbit through $v$, such that $\Psi(0)=v$, then for  $a\geq 0$ the method of characteristics implies that
$$x(a)=\Psi^x(0)(a)=\Lambda e^{-G(a)},\text{ with } G(a):= \int_0^a  \mu_x(s)+\sum_{i=1}^\n \beta_i(s)\Psi^{i}(s-a)  \mathrm{ds}.$$

Consequently, $x(a)\leq \Lambda e^{-a\mu_0}$, so $x\in L^1(\R_+,\R_+)$. Moreover, since $F$ is positive and absolutely continuous and since $t\longmapsto e^{-t}$ is Lipschitz continuous on $\R_+$, $x$ is also absolutely continuous, then differentiable almost everywhere. Finally, $x$ is clearly decreasing, then $$\int_0^{+\infty} |x'(a)|da =-\int_0^{+\infty}  x'(a) \d a =\underset{a\to +\infty}{\lim} x(0)-x(a)=\Lambda <+\infty.$$
    We just proved that $x\in W^{1,1}(\R_+)$, then $\tilde{v}\in D(A)$. Finally, $x(0)=\Lambda$ is readily verified, so $A\tilde{v}+F(\tilde{v})\in \X_0$.
\end{proof}

Now we can prove Lemma \ref{Lemma:Lyap_welldef_Orbit}.
\begin{proof}[Proof of Lemma \ref{Lemma:Lyap_welldef_Orbit}] \mbox{}
\begin{enumerate}
\item  First we show that $L_0$ is well defined on $\OO_+$.

Let $v\in X_+$ such that there exists a bounded and complete orbit $\Psi=(\Psi^x,\Psi^1,\Psi^2)$ which satisfies $\Psi(0)=v$.

For any $a\geq 0$, if $t>a$, then $\Psi^x(t)(a)=\Phi_t^x(\Psi(0))=\Phi^x_t(v)$. If $t\leq a$, then for any $b>a-t$, $\Psi^x(t)(a)=\Phi^x_{t+b}(\Psi(-b))(a)$. In both cases, formula \eqref{Eq:Duhamel} and the boundedness of $\Psi$ implies 
$$0<\Lambda e^{-\int_0^a \mu_x(s)\d s}e^{-a\|\beta_i\|_{L^\infty}\|\Psi^i\|_{L^\infty(\R)}} < \Psi^x(t)(a)=\Lambda e^{-\int_0^a \mu_x(s)+\sum_{i=1}^\n \beta_i(s)\Psi^{i}(t-a+s)  \d s}\leq \Lambda e^{-a\mu_0}$$  so that for any $t\in\R$, $a\longmapsto x_0^*(a)g(\Psi^x(t)(a)/x_0^*)$ is well defined on $\R_+$. From Assumption \ref{Assum:param}, simple comparisons  prove that the integral of the latter function, that appears in the definition of $L_0$, is well defined. 

Next we prove the Lyapunov decreasing property. Let $t\geq 0$ and set $(x(t,\cdot),y_1(t),y_2(t))=\Phi_t(v)$. From Lemma \ref{Lemma:Flow_C1_Orbit}, we can compute the following derivative
    \begin{flalign*}
        \frac{dL_0}{dt}(\Phi_t(v))=&\int_0^\infty \left(1-\frac{x_0^*(a)}{x(t,a)}\right)\partial_t x(t,a)\d a+\sum_{i=1}^\n \frac{y_i'(t)}{c_i}  \\
        =&-\int_0^\infty \left(1-\frac{x_0^*(a)}{x(t,a)}\right)\left(\partial_a x(t,a)+(\mu_x(a)+\sum_{i=1}^\n \frac{y_i(t)}{c_i}\beta_i(a))x(t,a)\right)\d a  \\
        &+\sum_{i=1}^\n y_i(t)\left(\int_0^\infty \beta_i(a)x(t,a)\d a-\frac{\mu_i}{c_i}\right).
    \end{flalign*}
    We use the fact that
    $$x_0^*(a)\frac{dg}{da}\left(\frac{x(t,a)}{x^*_0(a)}\right)=\left(1-\frac{x_0^*(a)}{x(t,a)}\right)\left(\partial_a x(t,a)-\frac{x(t,a){x_0^*}'(a)}{x_0^*(a)}\right)=\left(1-\frac{x_0^*(a)}{x(t,a)}\right)\left(\partial_a x(t,a)+\mu_x(a)x(t,a)\right)$$
    whence
    \begin{flalign*}
        \frac{dL_0}{dt}(\Phi_t(v))=&-\int_0^\infty x_0^*(a)\frac{\d g}{\d a}\left(\frac{x(t,a)}{x_0^*(a)}\right)\d a-\int_0^\infty\left(x(t,a)-x_0^*(a)\right) \sum_{i=1}^\n y_i(t)\beta_i(a)\d a\\
        &+\sum_{i=1}^\n y_i(t)\left(\int_0^\infty \beta_i(a)x(t,a)\d a-\frac{\mu_i}{c_i}\right)\\
        =& -\int_0^\infty \mu_x(a)x_0^*(a)g\left(\frac{x(t,a)}{x_0^*(a)}\right)da+\sum_{i=1}^\n y_i(t)\left(\int_0^\infty \beta_i(a)x_0^*(a)\d a-\frac{\mu_i}{c_i}\right).
    \end{flalign*}
    We finally end up with
    \begin{equation}
    \label{Eq:L0-deriv}
        \frac{dL_0}{dt}(\Phi_t(v))=-\int_0^\infty \mu_x(a)x_0^*(a)g\left(\frac{x(t,a)}{x_0^*(a)}\right)\d a+\sum_{i=1}^\n\frac{\mu_i}{c_i} y_i (t)(\RR_{0,i}-1)
    \end{equation}
    and we can actually show that
    $$\frac{dL_0}{dt}(\Phi_t(v))\leq -\int_0^\infty \mu_x(a)x_0^*(a)g\left(\frac{x(t,a)}{x_0^*(a)}\right)da\leq 0.$$
    Indeed, it is clear if $\underset{i\in\{1,2\}}{\max}\{\RR_{0,i}\}\leq 1$, which proves case (a). Finally (b), (c) and (d) are direct consequences of the positive invariance of $\partial  X_i$ for $i\in\{1,2\}$ (Proposition \ref{Prop:Attractors}) so that for any $i\in\{1,2\}$ and initial condition in $\partial  X_i$,  $y_i(t)=0$ for all $t\geq 0$. 
    
    Finally, suppose that  $t\longmapsto L_0(\Phi_t(v))$ is constant on $\R_+$. Then from equation \eqref{Eq:L0-deriv},   $\forall t\geq 0,\  x(t,.)=x_0^*$.  If moreover $R_{0,i}<1$ for some $i\in\{1,2\}$, then $y_i\equiv  0$. Otherwise, if  $R_{0,i}=1$, then injecting the relation $\forall t\in \R_+,\ x(t,.)= x_0^*$ in the first equation of \eqref{Eq:model} leads for every $t\in \R_+$ and a.e. $a\geq 0$ to $\beta_1(a)y_1(t)+\beta_2(a)y_2(t)=0$.
    
    Since $\beta_i\equiv 0$  would be a contradiction with $R_{0,i}=1$, we can conclude that $y_i\equiv 0$.  

    Consequently, $t\longmapsto L_0(\Phi_t(v))$ is constant on $\R_+$ if and only if $v=E_0$.

    \item Let $v\in X_1$ such that there is a bounded and complete orbit $\{\Psi(t),t\in \R\}$ through $v$ (such a $v$ exists, since the constant orbit $\{E_1\}$ is one). We prove just as above that if $\RR_{0,1}>1$, then for any $t\in\R$, $a\longmapsto x_1^*(a)g(\Psi^x(t)(a)/x_1^*)$ is well defined on $\R_+$ and is in $ L^1(\R_+,\R_+)$. The term $g(\Psi^1(t)/y_1^*)$ is also well defined since $X_1$ is invariant. So, for all $t\in \R,\ L_1(\Psi(t))$ is well defined. 
    
    Denote $(x(t,\cdot),y_1(t),y_2(t))=\Phi_t(v)$ and let $t\geq 0$, then from Lemma \ref{Lemma:Flow_C1_Orbit} we can compute the following derivative:
    \begin{flalign}
\frac{dL_1}{dt}(\Phi_t(v))=& -\int_0^\infty \left(1-\frac{x_1^*(a)}{x(t,a)}\right)(\partial_a x(t,a)+x(t,a)\left(\mu_x(a)+\sum_{i=1}^\n y_i(t)\beta_i(a)\right)\d a+\left(1-\frac{y_1^*}{y_1(t)}\right)\frac{y_1'(t)}{c_1}+\frac{y_2'(t)}{c_2} \nonumber \\
=& -\int_0^\infty x_1^*(a)\frac{dg}{da}\left(\frac{x(t,a)}{x_1^*(a)}\right) \d a-\int_0^\infty \left(1-\frac{x_1^*(a)}{x(t,a)}\right)(\beta_1(a)(y_1(t)-y_1^*)+y_2(t)\beta_2(a))x(t,a)\d a\nonumber \\
&+\frac{(y_1(t)-y_1^*)}{c_1}\left({c_1}\int_0^\infty \beta_1(a)x(t,a)\d a-\mu_1\right)+\frac{z(t)}{c_2}\left({c_2}\int_0^\infty \beta_2(a)x(t,a)\d a-\mu_2\right)\nonumber \\
=& -\int_0^\infty \left(\mu_x(a)+y_1^*\beta_1(a)\right)x_1^*(a)g\left(\frac{x(t,a)}{x_1^*(a)}\right)\d a+(y_1(t)-y_1^*)\left(\int_0^\infty \beta_1(a)x_1^*(a)da-\frac{\mu_1}{c_1}\right)\nonumber \\
&+y_2(t)\left(\int_0^\infty \beta_2(a)x_1^*(a)\d a-\frac{\mu_2}{c_2}\right)\nonumber \\
=&-\int_0^\infty \left(\mu_x(a)+y_1^*\beta_1(a)\right)x_1^*(a)g\left(\frac{x(t,a)}{x_1^*(a)}\right)da+y_2(t)\frac{\mu_2}{c_2}\left(\gamma_{1\to 2}-1\right), \label{Eq:L1prime}
\end{flalign}
where we used the equalities  
\begin{flalign*}
x_1^*(a)\frac{dg}{da}\left(\frac{x(t,a)}{x^*_1(a)}\right)&=\left(1-\frac{x_1^*(a)}{x(t,a)}\right)\left(\partial_a x(t,a)-\frac{x(t,a){x_1^*}'(a)}{x_1^*(a)}\right)\\
&=\left(1-\frac{x_1^*(a)}{x(t,a)}\right)\left(\partial_a x(t,a)+(\mu_x(a)+\beta_1(a)y_1^*)x(t,a)\right)
\end{flalign*}
and
$${c_1}\int_0^\infty \beta_1(a)x^*_1(a)da=\mu_1, \qquad {c_2}\int_0^\infty \beta_2(a)x_1^*(a)da=\mu_2\gamma_{1\to 2}.$$

We first consider the case $\gamma_{1\to 2}<1$.  Remember that $g(x/x_1^*)=0$ if and only if $x=x_1^*$, so
$$\frac{dL_1}{dt}(\Phi_t(v))\leq 0,$$
with equality if and only if $x(t,\cdot)=x_1^*$ and $y_2(t)=0$, in particular $\frac{dL_1}{dt}(\Phi_t(E_1))=0$. 

Note that if $x(t,\cdot)=x_1^*$,  $y_2(t)=0$ and $y_1(t)\neq y_1^*$, then $\partial_tx(t,a)\neq 0$ so there is no   $v\in X_+\backslash\{E_1\}$ such that for all $t\geq 0$, $\frac{dL_1}{dt}(\Phi_t(v))=0$. We then deduce (a) and (b).

Now we consider the case $\RR_{0,2}>1, \gamma_{1\to 2}=1$ and $\gamma_{2\to 1}\neq 1$. In this case, $L_1$ is constant on the positive orbit $\{\Phi_t(v), t\geq 0\}$ if and only if $x(t,\cdot)\equiv x_1^*$. Then the two first equations in \eqref{Eq:model}, along with \eqref{Eq:Def_y^*}, lead, for all $t\geq 0$ and a.e. $a\geq 0$, to $$y_2(t)\beta_2(a)=(y_1^*-y_1(t))\beta_1(a) \quad \text{ and } \quad y_1'(t)=0.$$

In particular, $y_1$ is constant. Note that $\beta_2\not\equiv 0$, otherwise it would be a contradiction with  $\gamma_{1\to 2} =1$, then $y_2$ is also constant. Consequently, either  for all $t\geq 0$,  $y_1(t)=y_1^*$ and $y_2(t)=0$, then $\{\Phi_t(v), t\geq 0\}=\{E_1\}$ is the only orbit on which $L_1$ is constant and the proof is complete, or for all $t\geq 0$, $y_1(t)\neq y_1^*$. The latter case implies that for all $t\geq 0$ and $a\geq 0$,
$$\beta_1(a)=\delta \beta_2(a) \text{ where }  \delta=\frac{y_2(t)}{y_1^*-y_1(t)}\text{ is constant.}$$

Moreover $\delta\neq 0$, otherwise we would have $\beta_1\equiv 0$, which is not compatible with $\RR_{0,1}>1$.

Since $\gamma_{1\to 2}=1$ and $\beta_1=\delta\beta_2$, the definition of $\gamma_{1\to 2}$ in \eqref{Eq:Def_gamma} and equation \eqref{Eq:Def_y^*} lead to $\delta=\frac{\mu_1c_2}{\mu_2c_1}$.

Injecting the relation $\beta_1=\mu_1\beta_2/\mu_2$ into the definition of $\gamma_{2\to 1}$ ultimately gives $\gamma_{2\to 1}=1$, which is absurd. Then $\Phi_t(v)\equiv E_1$. 

Finally, consider the case (d) with $\RR_{0,j}>1$ and $\gamma_{i\to j} = \gamma_{j\to i} = 1$. Just as above,  $L_1$ is constant on the positive orbit $\{\Phi_t(v), t\geq 0\}$ if and only if $x(t,\cdot)\equiv x_1^*$. Remember that $x_1^*=x_2^*$ when $\gamma_{1\to 2}=\gamma_{2\to 1}=1$ (by Proposition \ref{Prop:Positive_equilibria}). Then injecting \eqref{Eq:Def_y^*} in \eqref{Eq:model} implies that for all $t\in \R$, $y_1'(t) = y_2'(t)= 0$. Consequently $\gamma$ is a constant orbit in $X_1$, \textit{i.e.} a positive equilibrium, then belongs to $E_\infty\cup \{E_1\}$.

This achieves the proof, and the same applies for the $L_2$ case.

\item The existence of an equilibrium $E_3=(\overline{x},\overline{y_1},\overline{y_2})\in \cap_{k\in\{1,2\}} X_k$ has been established in Section \ref{Sec:equilibria}. The well-posedness of $L_3$ on $\OO_1\cap \OO_2$ is obtained exactly as above for the one of $L_1$ and $L_2$. 
Let $v\in X_1\cap X_2$ such that there exists a bounded and complete orbite through $v$ (such a $v$ exists as the constant orbit $\{E_3\}$ is one). Denote $(x(t,\cdot),y_1(t),y_2(t))=\Phi_t(v)$ and let $t\geq 0$, then from Lemma \ref{Lemma:Flow_C1_Orbit}, we can compute the following derivative.

If $v=E_3$, then  $t\longmapsto L_3(\Phi_t(V))$ is readily constant, it remains to prove that the reciprocal is true. 
     
     Indeed, differentiating as in the proof of the second item leads to $$\frac{\d L_3}{\d t}(\Phi_t(v))=-\int_0^\infty \left(\mu_x(a)+\underset{k\in\{1,2\}}{\sum}\beta_k(a)\overline{y_k}\right)\overline{x}(a)g\left(\frac{x(t,a)}{\overline{x}(a)}\right)\d a\leq 0.$$

    Assume $t\longmapsto L_3(\Phi_t(v))$ is constant on $\R_+$. Then the previous derivaritives implies that for any $t\geq 0$, $\Phi_t^x(v)=\overline{x}$.
    
    Moreover, the definition of $\overline{x}$ (equation \eqref{Eq:CNS_positive_eq}) implies that, for any $k\in \{1,2\}$, $$c_k \int_0^\infty \beta_k(a) \overline{x}(a) \d a = \mu_k.$$
    
    Consequently, system \eqref{Eq:model}, implies that $y_1$ and $y_2$ are constant and that, for any $a\geq 0$,  
    $$\underset{k\in\{1,2\}}{\sum} y_k(t)\beta_k(a)  =\underset{k\in\{1,2\}}{\sum}\overline{y_k}\beta_k(a).$$

    Suppose that $y_1\not \equiv \overline{y_1}$. Then for any $a\geq 0$, $$\beta_1(a)=\delta \beta_2(a) \text{ with }  \delta=-\frac{y_2-\overline{y_2}}{y_1-\overline{y_1}}.$$
    
    Note that $\gamma_{2\to 1}>1$ implies that $\delta \neq 0$. Indeed, otherwise we would have $\beta_1\equiv 0$ and then $\gamma_{2\to 1}=0$. 
    
    Injecting this relation in the definition \eqref{Eq:Def_gamma} of $\gamma_{2\to 1}$  leads to $$\gamma_{2\to 1}=\dfrac{c_1\delta}{\mu_1}\int_0^\infty \beta_2(a)x_2^*(a)\d a = \dfrac{c_1\mu_2}{c_2\mu_1}\delta$$ where the last equality results from the definition \eqref{Eq:Def_y^*} of $y_2^*$.
    Similarly $\gamma_{1\to 2}=\dfrac{c_2\mu_1}{c_1\mu_2}\dfrac{1}{\delta}=1/\gamma_{2\to 1} $. Consequently $\gamma_{1\to 2}\gamma_{2\to 1}=1$, which is absurd and then $t\longmapsto L_3(\Phi_t(v))$ is constant if and only if $v=E_3$. 
    
    Consequently, the only positive orbit on which  $t\longmapsto L_3(\Phi_t)$ is constant is $\{E_3\}$, that is, $L_3$ is a Lyapunov functional on $\omega(v)$, strict for $E_3$.
    
    Since a positive equilibrium is, in particular, a positive orbit, the former proof also implies that $E_3$ is the only equilibrium in $X_1\cap X_2$.
\end{enumerate}
\end{proof}

\section {Global analysis and proof of the main results}\label{Sec:GlobalAnalysis}

In this section we handle the attractivity of the equilibria and compute their basin of attraction. To prove the stability, we will actually prove that $\{E_i\}$ not only attracts the points, but also the compact sets in their basin. We proved in Lemma \ref{Lemma:omega-limit} that for any $v\in X_+$, $\Phi_t(v)$ converges to $\omega(v)$. In this section, we prove that, under suitable conditions, each compact set $K$ is attracted by an invariant and compact set $\omega(K)$, on which the Lyapunov functionals $L_i$ (for $i\in\llbracket 0,3\rrbracket$) are well-defined, and which contains their critical point $E_i$. In particular, this requires to prove the uniform persistence of the flow to guarantee that $y_i$ does not vanish on $\omega(K)$, so that $\omega(K)$ belongs to the suitable subset identified in Lemma \ref{Lemma:Lyap_welldef_Orbit}.

It may be tempting to conclude that $\omega(K)=\{E_i\}$ once one has shown that $\omega(w)=\{E_i\}$ for every $w\in \omega(K)$, however this is not true in general. In other words, the convergence to $\omega(K)$ and then to $\omega(w)$ is not transitive as we only get $\omega(w)\subset \omega(K)$, even if $\omega(K)$ is compact and invariant (see \cite[Example p.224]{Teschl2012} for a simple counter-example). Instead, if we can define a Lyapunov functional on $\omega(K)$, strict for $\{E_i\}$, then $\omega(K)$ reduces to $\{E_i\}$. This is a special case of Theorem 2.52 from \cite{SmithThieme2011}, which generalises the LaSalle invariance principle and implies that a strict Lyapunov functional cannot be defined on an invariant compact set larger than the singleton reduced to its critical point. 

\subsection{Proof of Lemma \ref{Lemma: CNS_eq_inf}}

\begin{description}
\item[Proof of $(3)\Rightarrow (1)$.] This is a straightforward consequence of the definition of $y_1^*$ and $y_2^*$.

\item[Proof of $(1)\Rightarrow(2)$.] Let $v=(x_0,y_{1,0},y_{2,0})\in  X_1\cap  X_2$ where for all $a\geq 0$,  $x_0(a)= \Lambda e^{-a}$. Then straightforward calculations prove that $L_1$ and $L_2$ are well-defined in $v$ and on the positive orbit starting from $\{v\}$. Since $x_0$ is, in particular, in $ W^{1,1}(\R_+)$ then \cite[Theorem 5.6.6, p. 242]{MagalRuan2018} applies and we can differentiate $t\longmapsto L_1(\Phi_t(v))$ as is the proof of Lemma  \ref{Lemma:Lyap_welldef_Orbit}. It follows from \eqref{Eq:L1prime} that $t\longmapsto L_1(\Phi_t(v))$ is decreasing, then converges, so its derivative converges to $0$, that is $\Phi_t^x(v)$ converges to $x_1^*$ in $L^1(\R_+,\R_+)$. The same arguments applied to $L_2$ show that $\Phi_t^x(v)$ converges to $x_2^*$ in $L^1(\R_+,\R_+)$. Consequently  $x_1^*=x_2^*$  in $L^1(\R_+)$, that is, for a.e. $a\geq 0$:
$$\Lambda e^{-\int_0^a \mu_x(s)ds}e^{-y_1^*\int_0^a \beta_1(s)\d s}=\Lambda e^{-\int_0^a\mu_x(s)ds}e^{-y_2^*\int_0^a \beta_2(s)\d s}$$ 
then
$$ y_1^*\int_0^a \beta_1(s)\d s = y_2^*\int_0^a \beta_2(s)\d s$$ 
and finally $y_1^* \beta_1(a)=y_2^* \beta_2(a)$ a.e. $a\geq 0$.

\item[Proof of $(2)\Rightarrow(3)$.]  Equation \eqref{Eq:Def_y^*} with $i=1$ states that
$$c_1\int_0^\infty\beta_1(a)x_1^*(a)\d a=c_1\Lambda\int_0^\infty\beta_1(a) e^{-\int_0^a \mu_x(s)\d s}e^{-y_1^* \int_0^a \beta_1(s)\d s}\d a = \mu_{1}.$$
    
With the second item of Lemma \ref{Lemma: CNS_eq_inf}, it rewrites as   $$\frac{c_1\Lambda y_2^*}{y_1^*}\int_0^\infty\beta_2(a) e^{-\int_0^a \mu_x(s)ds}e^{-y_2^*\int_0^a \beta_2(s)\d s}\d a = \frac{c_1\Lambda y_2^*}{y_1^*}\int_0^\infty\beta_2(a) x_2^*(a)\d a = \mu_{1}$$
    
Using equation \eqref{Eq:Def_y^*}, now with $i=2$, we obtain
$$c_1 y_2^*\mu_2=c_2\mu_1 y_1^*.$$
Since $y_2^*\beta_2(a)=y_1^*\beta_1(a)$ almost everywhere $a\geq 0$, multiplying each side by $\beta_2$ leads to the third item of Lemma \ref{Lemma: CNS_eq_inf}.\hfill $\square$
\end{description}

\subsection{Proof of Theorem \ref{Thm:Attractiveness_E0} : Attractivity of $E_0$}

We introduce the local notations $L_*=L_0$ and $E_*=E_0$, so that the same proof can be reused later for different settings. 

The proof proceed in two steps. First we prove that every compact set $K\in S$ is attracted by a compact subset of $\omega(K)\subset S$. The second step basically follows the proof of \cite[Theorem 2.52]{SmithThieme2011} to prove that $\omega(K)$ reduces to $E_*$

Let $K\subset S$ be a compact set. From Proposition \ref{Prop:solutions} $\Phi$ is asymptotically smooth on $X_+$. From equation \eqref{Eq:MassMajoration}, $t\longmapsto \|\Phi_t(K)\|_{X}$ is bounded on $\R_+$ (in the vocabulary of \cite{SmithThieme2011},  $\Phi$ is eventually bounded on $K$). Then \cite[Proposition 2.27]{SmithThieme2011} states that $\omega(K):=\cap_{t\geq 0}\overline{\{\Phi_s(K),s\geq t\}}$ is a compact invariant set that attracts $K$. Moreover, since $S$ is invariant and closed, then $\omega(K)\subset S$.

Next, we prove that $\omega(K)=\{E_*\}$. Since $\omega(K)$ is invariant and compact, there exists a bounded and complete orbit through any element of it. Then we know from Lemma \ref{Lemma:Lyap_welldef_Orbit} that $L_*$ is a well defined Lyapunov functional on $\omega(K)$. Let $v\in \omega(K)$ and $\gamma(v)=\{\Psi(t),t\in \R\}\subset \omega(K)$ be a complete and bounded orbit through $v$. The compactness of $\omega(K)$ implies that $\omega(v)\subset \omega(K)$ and $\alpha(v)\subset \omega(K)$. It follows from \cite[Proposition 2.51]{SmithThieme2011} that $L_*$ is constant on the invariant sets $\alpha(v)$ and $\omega(v)$, in particular $t\longmapsto L_*\circ \Phi_t(u)$ is constant for any $u\in \alpha(v) \cup \omega(v)$. Then Lemma \ref{Lemma:Lyap_welldef_Orbit}  implies that necessarily $u\in\{E_*\}$, so that $\omega(v)=\alpha(v)=\{E_*\}$, in particular $\{E_*\}\subset \omega(K)$. Consequently $\lim_{t\to-\infty}L_*(\Psi(t))=\lim_{t\to+\infty}L_*(\Psi(t))=L_*(E_*)$. This implies that $v=E_*$, indeed $t\longmapsto L_*\circ \Phi_t(u)$ is decreasing for any $u\in \omega(K)\backslash\{ E_*\}$ (Lemma \ref{Lemma:Lyap_welldef_Orbit}). Then $\omega(K)=\{E_*\}$. 
We proved that $\{E_*\}=\omega(K)$ attracts $K$, then $\{E_*\}$ attracts every compact set of $S$.

We just proved that $\{E_*\}$ is a compact attractor of compact sets  (in particular, an attractor of points) in $S$. Since $\Phi$ is also state-continuous, uniformly in finite time (Proposition \ref{Prop:solutions}),  \cite[Theorem 2.39]{SmithThieme2011} states that $\{E_*\}$ is stable. \hfill $\square$

\subsection{Proof of Theorem \ref{Thm:Attractiveness_E1E2_1eq} : case with only one non-trivial equilibrium}
We characterise the attractivity of $E_1$ and $E_2$ in the case where only one of them exists (\textit{i.e.} $\RR_{0,1}>1\geq \RR_{0,2}$ or $\RR_{0,2}>1\geq \RR_{0,1}$), or when we restrict the dynamics to a one-predator model (\textit{i.e.} to the boundary $\partial X_1$ or $\partial X_2$).

\begin{definition}[persistence \cite{SmithThieme2011}]\label{Def:persistence}
Let $\rho: X_+\to\R_+$. The semiflow $\Phi$ is uniformly weakly $\rho-$persistent if there exists  $\ep >0$ such that  $$u\in  X_+, \rho (u)>0 \Longrightarrow   \underset{t\to +\infty}{\limsup} \ \rho (\Phi_t(u))\geq \ep .$$ 
If we can replace $\limsup$ with $\liminf$ in the former, then $\Phi$ is said to be uniformly $\rho-$persistent.
\end{definition}

We want to show that under suitable conditions, for any $v\in X_i$ with $i\in\{1,2\}$ then $\omega(v)\subset X_i$, so that Lemma \ref{Lemma:Lyap_welldef_Orbit} implies that the appropriate Lyapunov functional is well defined on $\omega(v)$. To do so, we make use of \cite[Theorem 8.20, p.189]{SmithThieme2011} (see also \cite{HaleWaltman89}) to conclude on the uniform persistence of the flow based only on its behaviour on the boundary set $\rho^{-1}(\{0\})$ (here $\partial X_1$ and $\partial X_2$) and near the attractor included in this set.

\begin{proposition}[Well-posedness of Lyapunov functional on omega-limit sets : case with only one non trivial equilibrium]\label{Prop:Lyap_welldef_1eq}\mbox{}
Let $(i,j)\in\{1,2\}^2, j\neq i$ and suppose that $\RR_{0,i}>1$. Define  $\rho_i:X_+\to \R_+,\ (x,y_1,y_2)\longmapsto y_i$. Then the restriction of the semiflow $\Phi$ to the invariant set $\partial X_j$ is uniformly $\rho_i$-persistent. If moreover  $\RR_{0,j}\leq 1$ then $\Phi$ is uniformly $\rho_i$-persistent.
\end{proposition}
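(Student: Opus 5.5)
We follow the strategy announced before the statement: apply the Hale--Waltman type criterion \cite[Theorem 8.20]{SmithThieme2011}, which reduces uniform $\rho_i$-persistence to two properties. First, the semiflow must have a compact attractor of bounded (or compact) sets. Second, the boundary set $\rho_i^{-1}(\{0\})=\partial X_i$ must be handled as follows. The attractor of the flow restricted to it must be covered by finitely many isolated invariant sets, acyclically chained. Each of these sets must be \emph{weakly $\rho_i$-repelling}, i.e. no orbit starting in $\{\rho_i>0\}$ converges to it.

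The first property is already available. Corollary \ref{Cor:attractor} gives the global attractor $\A$ on $X_+$. On the closed invariant set $\partial X_j$ it gives the restricted attractor $\A\cap\partial X_j$, using Proposition \ref{Prop:Attractors} and the asymptotic smoothness from Proposition \ref{Prop:solutions}. The boundary dynamics are then fully described by Theorem \ref{Thm:Attractiveness_E0}.

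\textbf{Restricted case, $\rho_i$ on $\partial X_j$.} Here $\rho_i^{-1}(0)\cap\partial X_j=\partial X_1\cap\partial X_2$. By Theorem \ref{Thm:Attractiveness_E0} (item 4), $E_0$ is globally asymptotically stable there. Hence the boundary attractor is $\{E_0\}$, a single isolated set, and no cycles can occur.

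It remains to show that $E_0$ is weakly $\rho_i$-repelling. Suppose $v\in X_i\cap\partial X_j$ and $\Phi_t(v)\to E_0$. Then $\Phi^x_t(v)\to x_0^*$ in $L^1$, so
\[
\int_0^\infty\beta_i(a)\,\Phi_t^x(v)(a)\,\d a\to \int_0^\infty \beta_i x_0^*\,\d a=\mu_i\RR_{0,i}/c_i .
\]
Fix $\eta>0$ with $\RR_{0,i}(1-\eta)>1$. For $t\geq T$ we then get $y_i'(t)\geq \mu_i(\RR_{0,i}(1-\eta)-1)\,y_i(t)$. Since $y_i(T)>0$ by Proposition \ref{Prop:Attractors}, $y_i(t)\to\infty$, which contradicts $y_i(t)\to0$.

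To satisfy the stronger "isolated" requirement, I would use the same estimate in a neighbourhood. There is $\delta>0$ such that $\|u-E_0\|\le\delta$ implies $c_i\int\beta_i x\geq\mu_i(1+\epsilon)$, because $\beta_i\in L^\infty$ gives $|\int\beta_i(x-x_0^*)|\leq\|\beta_i\|_\infty\|x-x_0^*\|_{L^1}$. Consequently no orbit with $y_i>0$ can stay in that neighbourhood forever. This yields isolation in $X_+$ together with the weak repulsion, which is exactly the uniform form needed.

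\textbf{Full space, $\RR_{0,j}\leq1$.} Now $\rho_i^{-1}(0)=\partial X_i$, on which the dynamics is the single-competitor model for $y_j$. By Theorem \ref{Thm:Attractiveness_E0} (items 2 or 3, with $\RR_{0,j}\le1$ and $S=\partial X_i$), $E_0$ is globally asymptotically stable in $\partial X_i$. So again the boundary attractor is $\{E_0\}$ alone, and $E_j$ either does not exist or coincides with $E_0$ when $\RR_{0,j}\le1$.

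The repulsion argument must now allow $y_j\neq0$ near $E_0$. This causes no difficulty: the lower bound on $y_i'$ only uses the $x$-component, and $\|u-E_0\|\le\delta$ controls both $x$ and $y_j$. The same neighbourhood estimate therefore applies verbatim.

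\textbf{Main obstacle.} The main obstacle is checking the precise hypotheses of \cite[Theorem 8.20]{SmithThieme2011}. This includes stating the isolation/acyclicity conditions correctly, and verifying compactness of the attractor of the boundary flow restricted to $\partial X_i$ or $\partial X_1\cap\partial X_2$. The derivative computation itself is routine.

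If necessary, one can bypass Theorem 8.20. One first proves uniform \emph{weak} persistence directly by the neighbourhood argument, using the fact that $\omega$-limits of boundary points are $\{E_0\}$ together with a contradiction on $\limsup y_i<\epsilon$. One then upgrades to uniform persistence via the compact attractor, as in \cite[Theorem 4.5]{SmithThieme2011}.
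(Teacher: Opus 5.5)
Your proposal is correct and follows essentially the same route as the paper. You apply \cite[Theorem 8.20]{SmithThieme2011} with the single boundary set $\{E_0\}$, which is isolated and acyclic because Theorem \ref{Thm:Attractiveness_E0} makes it the only compact invariant set in the relevant boundary; you prove uniform weak repulsion from $c_i\int_0^\infty\beta_i x\,\d a\geq \mu_i\RR_{0,i}-c_i\|\beta_i\|_{L^\infty}\|x-x_0^*\|_{L^1}$, which forces exponential growth of $y_i$ near $E_0$; and you upgrade to uniform persistence with the compact attractor of Corollary \ref{Cor:attractor}. The only differences are cosmetic: you also check isolation in the whole space, where the paper only needs isolation in $\partial X_i$, and you sketch an incomplete fallback that bypasses Theorem 8.20 but is not needed for your main argument.
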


\begin{proof}
We only provide a proof for the uniform persistence on $X_+$ when $\RR_{0,i}>1$ and $\RR_{0,j}\leq 1$. The other case is similar. 

From Theorem \ref{Thm:Attractiveness_E0}, $\{E_0\}$ is globally attractive in  $\partial  X_i$ (since $\RR_{0,j}\leq 1$) and consequently $\underset{u\in \partial  X_i}{\bigcup}\omega(u)=\{E_0\}$. Below, we prove the four required assumptions to apply \cite[Theorem 8.20]{SmithThieme2011}, which states that $\Phi$ is uniformly weakly $\rho_i$-persistent.
    
    \begin{description}
    \item[1) $\{E_0\}$ is trivially compact and invariant.] 
    \item[2) $E_0$ is isolated in $\partial  X_i$,] \textit{i.e.} the maximal compact invariant set of a neighborhood of itself in $\partial  X_i$.  Actually we already proved in the proof of Theorem \ref{Thm:Attractiveness_E0} the stronger property that $\{E_0\}$ is the only compact invariant set in $\partial  X_i$. 

    \item[3) $\{E_0\}$ is acyclic in $\partial  X_i$,] \textit{i.e.}  apart from the constant orbit $\{E_0\}$, there is no complete orbit $\gamma \subset \partial  X_i$ connecting $\{E_0\}$ to itself (a homoclinic loop). Indeed, consider $u\in \partial  X_i$, $u\neq E_0$  and $\gamma(u)$ a homoclinic loop through $u$ in $\partial  X_i$. Then the proof from the previous point applies and $\gamma(u)=\{E_0\}$.
    \item[4) $\{E_0\}$ is uniformly weakly repelling in $ X_i$,] \textit{i.e.} $\exists$ $\ep >0$ such that $\underset{t\to +\infty}{\lim \sup}\|E_0-\Phi_t(u)\|_{ X}\geq \ep $ for any $u\in  X_i$. Indeed, let $u\in  X_i$ and denote $\Phi_t(u)=(x(t),y_1(t),y_2(t))$, then for any $t\geq 0$:
    \begin{flalign*}
     y_i'(t)=&y_i(t)\left(c_i\int_0^\infty \beta_i(a)(x(t,a)-x_0^*(a))\d a+c_i\int_0^\infty \beta_i(a)x_0^*(a)\d a-\mu_{i}\right)\\
     =& y_i(t)\left(c_i\int_0^\infty \beta_i(a)(x(t,a)-x_0^*(a))\d a +\mu_i(\RR_{0,i}-1)\right).
     \end{flalign*}
     Set $\ep =\frac{\mu_i(\RR_{0,i}-1)}{2 c_i ||\beta_i||_{L^\infty}}$ and assume that $\underset{t\to +\infty}{\lim \sup}\|E_0-\Phi_t(u)\|_{ X}< \ep $. Then $\exists$ $T>0$, $\forall t\geq T$,  $|x(t,\cdot)-x_0^*|_{L^1}<\ep  $ implying that $\forall t\geq T$, $y_i'(t)>y_i(t)\left(\mu_i(\RR_{0,i}-1)-c_i||\beta_i||_{L^\infty}\ep \right)$, with $\mu_i(\RR_{0,1}-1)-c_i ||\beta_i||_{L^\infty}\ep >0$. We would get $\lim_{t\to +\infty}y_i(t)=+\infty$, which contradicts $\limsup_{t\to+\infty}\|E_0-\Phi_t(u)\|_{ X}< \ep $.   
    \end{description}
    
    Then $\Phi$ is uniformly weakly  $\rho_i$-persistent  and the existence of a global attractor (by corollary \ref{Cor:attractor}) implies that it is actually uniformly $\rho_i$-persistent (\cite[Theorem 5.2 and hypothesis H0 p.125]{SmithThieme2011}).
\end{proof}

Proposition \ref{Prop:Lyap_welldef_1eq} allows to conclude on the attractivity of positive equilibrium in the special case where there exists only one non-trivial equilibrium, or when we restrict the dynamics to a  one-predator model, as stated in Theorem \ref{Thm:Attractiveness_E1E2_1eq} proved below.

\begin{proof}[Proof of Theorem \ref{Thm:Attractiveness_E1E2_1eq}]
    The proof proceeds exactly as for Theorem \ref{Thm:Attractiveness_E0}, with $L_*=L_i$ and $E_*=E_i$, except that here $S$ is not closed. In this case, the inclusion $\omega(K)\subset S$  is obtained because $S$ is invariant and $\Phi$ is uniformly $\rho_i$-persistent (Proposition \ref{Prop:Lyap_welldef_1eq}), so that for any compact $K\subset S$, $\omega(K)\subset \{(x,y_1,y_2)\in S\ : \ y_i\geq \ep\}\subset S$, where $\ep$ is the one from the definition \ref{Def:persistence} of the uniform persistence.   
    
    Note that for the second item, one requires the additional hypothesis $\gamma_{i\to j}<1$ to guarantee  that $L_*$ is a Lyapunov functional, strict for $\{E_i\}$, on the compact invariant set $\omega(K)\subset S=X_i$ (Lemma \ref{Lemma:Lyap_welldef_Orbit}.2.a). This is readily satisfied since $\RR_{0,i}>1$ implies that $\|\beta_i\|_{L^1}> 0$, then $\gamma_{i\to j}<\RR_{0,j}\leq 1$ (Remark \ref{Remark: Comparison_gamma_R}). 
\end{proof}

\subsection{Proofs of Theorems \ref{Thm:AttractivenessR0>1} and \ref{Thm:Attractiveness_continuum} : case with two non-trivial equilibria}
In this case, at least two non-trivial equilibria exist and the following proposition takes the role of Proposition \ref{Prop:Lyap_welldef_1eq} 
\begin{proposition}[Well-posedness of Lyapunov functional on omega-limit sets : case with multiple non trivial equilibria]\label{Prop:Lyap_welldef}\mbox{}

Let $(i,j)\in\{1,2\}^2$, $i\neq j$  and suppose $\underset{i\in\{1,2\}}{\min}\{\RR_{0,i}\}>1$. Define $\rho_i:X_+\to \R_+,\ (x,y_1,y_2)\longmapsto y_i$,  $\rho_{1,2}:X_+\to \R_+,\ (x,y_1,y_2)\longmapsto  \min(y_1,y_2)$ and $\rho_{1+2}:X_+\to \R_+,\ (x,y_1,y_2)\longmapsto  y_1+y_2$. Then :
\begin{enumerate}

    \item $\gamma_{j\to i}\geq 1$ or $\gamma_{i\to j}\geq 1$.
    \item If $\gamma_{j\to i}=1$, then $\gamma_{i\to j}\geq 1$. 
    \item  If  $\gamma_{j\to i}>1$ then the semiflow $\Phi$ is uniformly $\rho_i$-persistent. 
    
    \item If  $\gamma_{j\to i}>1$ and $\gamma_{i\to j}>1$ then the semiflow $\Phi$ is uniformly $\rho_{1,2}$-persistent.   
    \item If  $\gamma_{1\to 2}=\gamma_{2\to 1}=1$, then the semiflow $\Phi$ is uniformly $\rho_{1+2}$-persistent.

\end{enumerate}
\end{proposition}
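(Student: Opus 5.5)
We treat the five items in two groups. Items 1 and 2 are structural relations between the invasion fitnesses, proved with the Lyapunov functionals $L_1,L_2$ by contradiction. Items 3--5 are persistence statements, proved with \cite[Theorem 8.20]{SmithThieme2011}, exactly as in Proposition \ref{Prop:Lyap_welldef_1eq}.

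For items 1 and 2, assume by contradiction that either $\max(\gamma_{1\to 2},\gamma_{2\to 1})<1$, or $\gamma_{j\to i}=1>\gamma_{i\to j}$.

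\textbf{Step 1 (a regular orbit).} Take $v\in X_1\cap X_2$ with $x_0(a)=\Lambda e^{-a}$, as in the proof of $(1)\Rightarrow(2)$ of Lemma \ref{Lemma: CNS_eq_inf}. Then $L_1$ and $L_2$ are finite along the positive orbit from $v$, and they are differentiable along it by \cite[Theorem 5.6.6]{MagalRuan2018}.

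\textbf{Step 2 (both derivatives are nonpositive).} By \eqref{Eq:L1prime} and its analogue for $L_2$,
$$\frac{dL_k}{dt}=-\int_0^\infty(\mu_x+y_k^*\beta_k)\,x_k^*\,g\!\left(\frac{x(t,\cdot)}{x_k^*}\right)\d a+\frac{\mu_l}{c_l}\,y_l(t)\,(\gamma_{k\to l}-1),\qquad \{k,l\}=\{1,2\}.$$
Under either assumption both $\gamma_{k\to l}\le 1$, so both derivatives are $\le 0$. Hence $L_1$ and $L_2$ are nonincreasing and bounded below, and so converge.

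\textbf{Step 3 (convergence of $x$).} The dissipation term is integrable in time. I expect this to be the delicate point: one must pass from "integrable and nonnegative" to "tends to $0$". I would use a Barbalat-type argument: the bounds \eqref{Eq:MassMajoration} together with the Lipschitz continuity of the flow in time (from the integrated formulation) should give uniform continuity of the dissipation term. Then, since $\mu_x\ge\mu_0>0$, this yields $x(t,\cdot)\to x_1^*$ and $x(t,\cdot)\to x_2^*$, as is already done in the proof of Lemma \ref{Lemma: CNS_eq_inf}.

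\textbf{Step 4 (contradiction).} From Step 3, $x_1^*=x_2^*$. Then $y_1^*\beta_1=y_2^*\beta_2$ almost everywhere, and the implications $(2)\Rightarrow(3)\Rightarrow(1)$ of Lemma \ref{Lemma: CNS_eq_inf} give $\gamma_{1\to2}=\gamma_{2\to1}=1$. This contradicts the assumption, which proves items 1 and 2.

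For item 3, let $\rho_i=y_i$; the boundary set is $\partial X_i$. Inside $\partial X_i$ the dynamics is that of a single competitor. By Theorems \ref{Thm:Attractiveness_E0} and \ref{Thm:Attractiveness_E1E2_1eq}, every omega-limit set in $\partial X_i$ is $\{E_0\}$ (when $y_j=0$) or $\{E_j\}$ (when $y_j>0$, since $\RR_{0,j}>1$). I take the covering $M_1=\{E_0\}$, $M_2=\{E_j\}$ and check the four hypotheses of \cite[Theorem 8.20]{SmithThieme2011}.

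\begin{enumerate}
\item \textbf{Isolation.} The compact invariant sets of $\partial X_i$ consist of $E_0$, $E_j$ and complete orbits going from $E_0$ to $E_j$. This follows from the strict Lyapunov functionals $L_0$ on $\partial X_1\cap\partial X_2$ and $L_j$ on $X_j\cap\partial X_i$, via Lemma \ref{Lemma:Lyap_welldef_Orbit}. Each $M_k$ is therefore isolated.
\item \textbf{Acyclicity.} An orbit in $X_j\cap\partial X_i$ cannot return to $E_0$, because it converges to $E_j$. An orbit in $\partial X_1\cap \partial X_2$ stays there, so it cannot reach $E_j$. Hence no cycle exists.
\item \textbf{Weak repulsion of $E_0$.} This follows from $\RR_{0,i}>1$, by the same computation as in Proposition \ref{Prop:Lyap_welldef_1eq}.
\item \textbf{Weak repulsion of $E_j$.} Write
$$y_i'=y_i\left(c_i\int_0^\infty\beta_i\,(x-x_j^*)\,\d a+\mu_i(\gamma_{j\to i}-1)\right),$$
and take $\ep=\mu_i(\gamma_{j\to i}-1)/(2c_i\|\beta_i\|_{L^\infty})$. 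If $\limsup_{t\to+\infty}\|\Phi_t(u)-E_j\|_X<\ep$, then $y_i'\ge \kappa y_i$ eventually for some $\kappa>0$, so $y_i\to\infty$. This contradicts $y_i\to 0$, which closeness to $E_j$ forces.
\end{enumerate}

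This gives uniform weak persistence. The compact attractor (Corollary \ref{Cor:attractor}) upgrades it to uniform persistence, as in Proposition \ref{Prop:Lyap_welldef_1eq}.

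For item 4, apply item 3 to $(i,j)=(1,2)$ and to $(i,j)=(2,1)$. The liminfs of $y_1$ and $y_2$ are then both bounded below, so the liminf of their minimum is bounded below by the smaller constant.

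For item 5, the boundary set $\rho_{1+2}^{-1}(0)$ is $\partial X_1\cap\partial X_2$. Its only compact invariant set is $\{E_0\}$, by Theorem \ref{Thm:Attractiveness_E0}, and this set is trivially isolated and acyclic. For weak repulsion, take $u$ with $y_1+y_2>0$; then some $y_k>0$, and that $y_k$ grows near $E_0$ because $\RR_{0,k}>1$, exactly as before. Theorem 8.20 then applies, followed by the same upgrade to uniform persistence.

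The main obstacles are the Barbalat step in items 1--2 and the careful verification of isolation and acyclicity of $\{E_j\}$ inside $\partial X_i$.
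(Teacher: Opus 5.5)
Your proposal is correct. Items 3--5 follow the paper's route closely: Theorem 8.20 of \cite{SmithThieme2011} with the covering $\{E_0\},\{E_j\}$, weak repulsion from $\RR_{0,i}>1$ and $\gamma_{j\to i}>1$, then the upgrade through the compact attractor. There is one small slip. Staying within $\ep$ of $E_j$ only forces $y_i<\ep$ eventually, not $y_i\to0$; this is still contradicted by $y_i\to\infty$, so nothing breaks.

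The genuine difference is in items 1--2. The paper proves item 1 with no asymptotics. If both invasion fitnesses are $<1$, the level-curve argument of Proposition \ref{Prop:Positive_equilibria} still gives an interior equilibrium $E_3=(\overline{x},\overline{y_1},\overline{y_2})$. At this constant orbit, \eqref{Eq:L1prime} gives $\frac{dL_i}{dt}\le\overline{y_j}\frac{\mu_j}{c_j}(\gamma_{i\to j}-1)<0$, which is impossible; this is the strictness of $L_i$ for $\{E_i\}$.

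Only item 2 is done dynamically in the paper, with your orbit $x_0=\Lambda e^{-a}$. The paper then finishes via $y_j\to0$ and $L_j\to+\infty$. Your finish is shorter, and you do not even need Lemma \ref{Lemma: CNS_eq_inf}: $x_1^*=x_2^*$ together with \eqref{Eq:Def_y^*} gives $\gamma_{2\to1}=\frac{c_1}{\mu_1}\int_0^\infty\beta_1x_1^*\,\d a=1$, and likewise $\gamma_{1\to2}=1$. So your unified argument avoids constructing $E_3$, but item 1 then depends on the limit step you flag (which the paper merely asserts).

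That step needs no Barbalat lemma. Let $D_k\ge0$ be minus the derivative of $L_k$ along the orbit. Then $\int_0^\infty(D_1+D_2)\,\d t\le L_1(v)+L_2(v)<\infty$, so there are $t_n\to\infty$ with $D_1(t_n)+D_2(t_n)\to0$. Since $\mu_x\ge\mu_0$, $g(x(t_n,\cdot)/x_k^*)\to0$ in $L^1(x_k^*\,\d a)$ for $k=1,2$. Along a subsequence, $x(t_n,a)/x_1^*(a)\to1$ and $x(t_n,a)/x_2^*(a)\to1$ for a.e. $a$. Hence $x_1^*=x_2^*$, with no uniform continuity estimate required.
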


\begin{proof}\mbox{}
\begin{enumerate}
    \item Suppose that $\gamma_{j\to i}<1$ and $\gamma_{i\to j}<1$, then, exactly as for the case $\gamma_{j\to i}>1$ and $\gamma_{i\to j}>1$ (see Proposition \ref{Prop:Positive_equilibria}.1), there would exist at least one equilibrium $E_3\in  \cap_{k\in\{1,2\}}X_k$. Moreover, since $\{E_3\}$ is a complete orbit included in $\cap_{k\in\{1,2\}}X_k\subset X_i$, Lemma \ref{Lemma:Lyap_welldef_Orbit} states that $L_i$ is well defined  on $\{E_3\}$. Since $E_3$ is  an equilibrium, the function $t\longmapsto L_i(\Phi_t(E_3))$ is constant. On the other hand, since $\gamma_{i\to j}< 1$, $L_i$ is a Lyapunov functional on the complete (and constant) orbit through $E_3$, strict for $E_i$, and then $t\longmapsto L_i(\Phi_t(E_3))$ is decreasing. This is absurd, then it is not possible that $\gamma_{j\to i}<1$ and $\gamma_{i\to j}<1$.
    
    \item  Suppose that $\gamma_{j\to i}=1$ and $\gamma_{i\to j}< 1$. Set $v=(x_0,y_{1,0},y_{2,0})\in \cap_{k\in\{1,2\}} X_k$ where for all $a\geq 0$,  $x_0(a)= \Lambda e^{-a}$. Then, exactly as in the proof of Lemma \ref{Lemma: CNS_eq_inf}, we conclude that  $\Phi_t^x(v)$ converges to $x_i^*=x_j^*$ on $L^1(\R_+,\R_+)$, that $t\longmapsto L_j(\Phi_t(v))$ and $t\longmapsto L_i(\Phi_t(v))$ are decreasing and that their derivatives converge to zero. In particular, the latter implies that $\Phi_t^{j}(v)$ converges to zero (equation \eqref{Eq:L1prime}). In turn, this would imply that $\lim_{t\to +\infty}L_j(\Phi_t(v)) = +\infty$, by definition of $L_j$ in \eqref{Eq:Def_Li}, which is absurd since $t\longmapsto L_j(\Phi_t(v))$ is decreasing. 
    
    \item Let $v\in X_i$. We use again \cite[Theorem 8.20]{SmithThieme2011} to prove that $\omega(v)\subset  X_i$.

    From Theorem \ref{Thm:Attractiveness_E1E2_1eq}, we have $\underset{u\in \partial  X_i}{\bigcup}\omega(u)=\{E_0,E_j\}$ and we prove the following statements \begin{description}
    \item[1) $\{E_0\}$ and $\{E_j\}$ are trivially two disjoint, compact and invariant sets.]
    \item[2) $\{E_0,E_j\}$ is acyclic in $\partial X_i$,] \textit{i.e.} : \begin{itemize}
        \item Apart from the constant orbit $\{E_0\}$, there is no complete orbit $\gamma \subset \partial  X_i$ connecting $\{E_0\}$ to itself (a homoclinic loop).  Indeed, otherwise we would have $\gamma\subset \cap_{k\in\{1,2\}}\partial X_k$ since $\{E_j\}$ is globally attractive in $\partial X_i\cap X_j$ (Theorem \ref{Thm:Attractiveness_E1E2_1eq}). Moreover, $\cap_{k\in\{1,2\}} \partial X_k$ is invariant and equation \eqref{Eq:Duhamel} induces that the singleton orbit $\{E_0\}$ is the only complete orbit in $\cap_{k\in\{1,2\}} \partial X_k$.  
        
        \item Let $\gamma=\{\Psi(t),t\in\R\}\subset \partial X_i$ be a homoclinic loop connecting $E_j$ to itself. We prove that necessarily $\gamma=\{E_j\}$. We know that $E_j\in X_j$ and $\partial X_i\cap X_j$ is invariant. Since $\{E_0\}$ is globally attractive in $\partial X_i\cap \partial X_j$ (Theorem \ref{Thm:Attractiveness_E0}) then $\gamma\subset \partial X_i\cap X_j$. Moreover, by definition of $\gamma$,  $\lim_{t\to-\infty}L_j(\Psi(t))=\lim_{t\to+\infty}L_j(\Psi(t))=L_j(E_j)$. Since $L_j$ is a Lyapunov functional on $\gamma$ strict for $\{E_j\}$ (Lemma \ref{Lemma:Lyap_welldef_Orbit}.2.b), then $\gamma=\{E_j\}$, otherwise $L_j$ would have been decreasing along $\gamma$.
        \item There is no cycle formed by two complete orbits in  $\partial X_i$ connecting $\{E_0\}$ to $\{E_j\}$ and $\{E_j\}$ to $\{E_0\}$. Indeed, since $E_j\in X_j$ and is attractive on $\partial X_i\cap  X_j$ (Theorem \ref{Thm:Attractiveness_E1E2_1eq}), there is no complete orbit connecting $E_j$ to $E_0$ in $\partial X_i$.
        \end{itemize}
    \item[3) $\{E_0\}$ is isolated in $\partial X_i$.] Let $V$ be a neighborhood of $E_0$ in $\partial  X_i$ such that $E_j\notin V$ and let $K\subset V$ be a compact invariant set. Since $E_j$ is globally attractive in $\partial X_i\cap  X_j$ (Theorem \ref{Thm:Attractiveness_E1E2_1eq}.1) and $E_j\notin K$, then  $K\subset \cap_{k\in\{1,2\}} \partial X_k$. Since $K$ is compact and invariant, through any $u\in K$ there is a bounded and complete orbit $\gamma(u)\subset K$. Since the constant orbit $\{E_0\}$ is the only complete orbit included in  $\cap_{k\in\{1,2\}}\partial X_k$, it follows that $K=\{E_0\}$.
    \item[4) $\{E_j\}$ is isolated in $\partial X_i$.] Suppose $V\subset \partial X_i\cap  X_j$ is a neighborhood of $E_j$ and $K\subset V$ is a compact invariant set such that $E_j\in K$. Since $K$ is compact and invariant, for any $u\in K$ there is a bounded and complete orbit $\gamma(u)\subset K$ through $u$. Then from Lemma \ref{Lemma:Lyap_welldef_Orbit},  $L_j$ is a Lyapunov functional on $K$ and, for any $u\in K$,  $L_j$ is constant on $\gamma(u)$ if and only if $u=E_j$. Then \cite[Theorem 2.52]{SmithThieme2011} implies that $K=\{E_j\}$.  If $K'\subset V$ is a compact and invariant set such that $E_j\not\in K'$, one can repeat the previous argument with $K=K'\cup \{E_j\}$, then conclude that $K=\{E_j\}$, so $K'=\emptyset$.

    \item[5) $\{E_0\}$ is uniformly weakly repelling in $X_i$.] See the proof of Proposition \ref{Prop:Lyap_welldef_1eq}.  
    \item[6) $\{E_j\}$ is uniformly weakly repelling in $X_i$.] The proof is identical to the latter point (see Proposition \ref{Prop:Lyap_welldef_1eq}) after replacing $\RR_{0,i}$, $E_0$ and $x_0^*$ respectively by $\gamma_{j\to i}$, $E_j$ and $x_j^*$.
    \end{description}
    
    Then $\Phi$ is uniformly weakly  $\rho_i$-persistent  and the existence of a global attractor (corollary \ref{Cor:attractor}) implies that it is actually uniformly $\rho_i$-persistent (\cite[Theorem 5.2 and hypothesis H0 p.125]{SmithThieme2011}).

    \item This a direct consequence of the uniform $\rho_i$-persistence for $i\in\{1,2\}$ stated in item 3.

    \item We have $\rho_{1+2}^{-1}(\{0\})= \partial X_1 \cap \partial X_2$ and $\underset{u\in \partial X_1 \cap \partial X_2}{\bigcup} \omega(u)=\{E_0\}$ (by Theorem \ref{Thm:Attractiveness_E0}). Then we can prove,   exactly as  in the proof of Proposition \ref{Prop:Lyap_welldef_1eq}, than $\{E_0\}$ is compact, invariant, isolated and  acyclic in $\partial X_1 \cap \partial X_2$ and uniformly weakly repelling in $X_1 \cup X_2$. We conclude that $\Phi$ is uniformly weakly $\rho_{1+2}$-persistence, and the existence of a global attractor (corollary \ref{Cor:attractor}) implies that it is actually uniformly $\rho_{1+2}$-persistent (\cite[Theorem 5.2 and hypothesis H0 p.125]{SmithThieme2011}).
\end{enumerate}
\end{proof}

We can now compute the basin of attraction of each equilibrium, by means of the LaSalle invariance principle  \cite[Theorem 2.52]{MiSmiThi95}.

\begin{proof}[Proof of Theorem \ref{Thm:AttractivenessR0>1}]
    The proof proceeds exactly as for Theorem \ref{Thm:Attractiveness_E0} (with $L_*=L_i$ and $E_*=E_i$ for item 1 and $L_*=L_3$ and $E_*=E_3$ for item 2), except that here $S$ is not closed. In this case, the inclusion $\omega(K)\subset S$  is obtained because $S$ is invariant and $\Phi$ is uniformly $\rho_i$-persistent for item 1 or $\rho_{1,2}$-persistant for item 2 (Proposition \ref{Prop:Lyap_welldef}, items 3 and 4 respectively).  
\end{proof}

The critical case $\gamma_{1\to 2}=\gamma_{2\to 1}=1$ is handled by Theorem \ref{Thm:Attractiveness_continuum}.

\begin{proof}[Proof of Theorem \ref{Thm:Attractiveness_continuum}]
The proof proceeds in three steps. First we prove that $E_\infty\cup \{E_1,E_2\}$ is a compact attractor of compact sets in $\cup_{k\in\{1,2\}} X_k$ and is stable. Then we prove thate $E_\infty$ is an attractor of points in $\X_1\cap X_2$. Finally we prove the formula limit of $\Phi_t(v)$.

\begin{enumerate}
    \item Let $K\subset X_1\cup X_2$ be a compact set. Exactly as in the proof of Theorem \ref{Thm:Attractiveness_E0}, we know that $K$ is attracted by a compact invariant set $\omega(K)$. It comes from the uniform $\rho_{1+2}$-persistence (Proposition \ref{Prop:Lyap_welldef}) that $\omega(K)\subset X_1\cup X_2$ and from Lemma \ref{Lemma:Lyap_welldef_Orbit} that for any $i\in\{1,2\}$, $L_i$ is a Lyapunov functional on $\omega(K)\cap X_i$ (if it is not empty), strict for $E=E_\infty\cup(\{E_i\} \cap \omega(K))$. 

Let $v\in \omega(K)$. Since $\omega(K)$ is invariant and compact, there exists a bounded and complete orbit $\gamma(v)=\{\Psi(t),t\in \R\}\subset \omega(K)$ through $v$. The compactness of $\omega(K)$ implies that $\omega(v)\subset \omega(K)$ and $\alpha(v)\subset \omega(K)$.

If $v\in X_i \cap \partial X_j$ for $i,j\in\{1,2\},\ i\neq j$, then the invariance of $\partial X_j$ and the fact that $\omega(v)\cup \alpha(v) \subset \omega(K)\subset X_1\cup X_2$ implies that $\omega(v)\cup \alpha(v)\subset X_i \cap \partial X_j \cap \omega(K)$. Then we conclude as in the proof of Theorem \ref{Thm:Attractiveness_E0} that $v=\{E_i\}$.  In particular, $\omega(K)\cap X_i \cap \partial X_j$ is either empty, or reduces to $\{E_i\}$.

If $v\in X_1 \cap X_2$, then both $L_1$ and $L_2$ are well defined Lyapunov function on $\gamma(v)$ (Lemma \ref{Lemma:Lyap_welldef_Orbit}). Moreover, since $\alpha(v)\subset \omega(K)$, the previous paragraph implies that either $\alpha(v)\subset X_1 \cap X_2$, or there exists $i\in \{1,2\}$ such that $E_i\in\alpha(v)$, and the same applies for $\omega(v)$.

Suppose that $E_i\in \alpha(v)$, \textit{i.e.} $\exists (t_n)_{n\in\N}\in \R_+^\N$ such that $\lim_{n\to +\infty} t_n=+\infty$ and $\lim_{n \to +\infty}\Psi(-t_n)=E_i$. We know that $L_i(E_i)=0$ and $t\longmapsto L_i(\Psi(t))$ is a non-increasing non-negative function, then it comes that $L_i(\Phi(t))=0$ for all $t\in \R$, so $\gamma(v)=\{E_i\}$. In particular $v=E_i$, which is absurd. Then $E_i\not\in \alpha(v)$. Similarly, $E_i\not \in \omega(v)$ because $t\longmapsto L_j(\Psi(t))$ (with $j\in \{1,2\}$, $j\neq i$) is a non-increasing function and $\lim_{\Psi(t)\to  E_i} L_j(\Psi(t))=+\infty$. We just proved that $\alpha(v)\cup \omega(v) \subset X_1 \cap X_2$.

We know that $L_1$ and $L_2$ are constant on the invariant sets $\alpha(v)$ and $\omega(v)$ (\cite[Proposition 2.51]{SmithThieme2011}). In particular, for any $i\in\{1,2\}$ and any $u\in \alpha(v) \cup \omega(v)$, $t\longmapsto L_i\circ \Phi_t(u)$ is constant. Then Lemma \ref{Lemma:Lyap_welldef_Orbit}  implies that $u\in E_\infty$, so that $\omega(v)\subset E_\infty$ and $\alpha(v)\subset E_\infty$.  

Now we prove that $\omega(v)$ and $\alpha(v)$ are singletons. Indeed, Proposition \ref{Prop:Positive_equilibria} states that $x_1^*=x_2^*$ and that $E_\infty=\{E_{1+\sigma}:= (1-\sigma)E_1 +\sigma E_2 \ : \ \sigma\in(0,1)\}$. Computing the values of $L_1$ and $L_2$ alongside this parametrization leads to
$$L_1(E_{1+\sigma})=0+\frac{y_1^*}{c_1} g(1-\sigma) +  \sigma \frac{y_2^*}{c_2}, \quad L_2(E_{1+\sigma})=0+\frac{y_2^*}{c_2}g(\sigma)  + (1-\sigma)\frac{y_1^*}{c_1},\qquad \forall \sigma\in(0,1).$$

Since $g$ is a decreasing function on $(0,1)$, then $\sigma \longmapsto L_1(E_{1+\sigma})$ and $\sigma \longmapsto L_1(E_{1+\sigma})$ are respectively increasing and decreasing functions on $(0,1)$. In particular, the only subsets of $E_\infty$ on which $L_1$ or $L_2$ are constant are its singletons. Since $L_1$ and $L_2$ are constant on the invariant sets $\alpha(v)$ and $\omega(v)$, then they both reduce to singletons, say $\alpha(v)={E_\alpha}$ and $\omega(w)={E_\omega}$. 

We prove that $E_\alpha=E_\omega$. Indeed, assume that $E_\alpha\neq E_\omega$, then the opposite monotonicities of $L_1$ and $L_2$ restricted to $E_\infty$ imply that either $L_1(E_\alpha)<L_1(E_\omega)$ or $L_2(E_\alpha)<L_2(E_\omega)$. On the other hand, for any $i\in\{1,2\}$, the Lyapunov functional $L_i$ is non-increasing along the complete orbit $\gamma(v)$, then $L_i(E_\alpha)=\lim_{t\to-\infty}L_i(\Psi(t)\geq \lim_{t\to+\infty}L_i(\Psi(t)=L_i(E_\omega)$. This is absurd, so $\alpha(v)=\omega(v)$ and reduces to a singleton $\{E_{1+\sigma_v}\}\subset E_\infty$. 

Then $\gamma(v)$ is a homoclinic loop connecting $E_{1+\sigma_v}$ to itself and we can conclude, as in the proof of Theorem \ref{Thm:Attractiveness_E0}, that $\gamma(v)$ reduces to $\omega(v)$, \textit{i.e.} $\{v\}=\omega(v)\subset  E_\infty$. 

We proved that $\omega(K)\subset E_\infty\cup \{E_1,E_2\}$, \textit{i.e.} $E_\infty\cup \{E_1,E_2\}$ is a compact attractor of the compact sets of $X_1\cup X_2$. Since $\Phi$ is also state-continuous, uniformly in finite time, \cite[Theorem 2.39]{SmithThieme2011} states that $ E_\infty\cup \{E_1,E_2\}$ is also stable.
 \item  Now we prove that $E_\infty$ is globally attractive in $\X_1\cap \X_2$.  Let $v=(x_0,y_{1,0},y_{2,0}) \in\X_1\cap X_2$, and denote, for all $t\geq 0$,  $\Phi_t(v)=(x(t,.),y_1(t),y_2(t))$.  The third item of Lemma \ref{Lemma: CNS_eq_inf} implies that $$\frac{y_1'(t)}{y_1(t)}=\frac{\mu_1 y_2'(t)}{\mu_2 y_2(t)}\text{, that is } (\ln \circ y_1)'(t)=\frac{\mu_1}{\mu_2}(\ln \circ y_2)'(t).$$

    After integration, it comes that $y_1$ and $y_2$ are linked by the following algebraic relation
    \begin{equation} y_1(t)=\frac{y_{1,0}}{y_{2,0}}y_2(t)^\frac{\mu_1}{\mu_2} \label{Eq:Relation_y1_y2}
    \end{equation}
    
Suppose that $\underset{t\to+\infty}{\liminf} y_1(t)=0$, then there exists $(t_n)_{n\in\mathbb{N}}$ such that $t_n\underset{n\to +\infty}{\to}+\infty$ and $y_1(t_n)\underset{n\to +\infty}{\to}0$. Then, from equation \eqref{Eq:Relation_y1_y2}, $y_2(t_n)\underset{n\to +\infty}{\to}0$. From Lemma \ref{Lemma:omega-limit},  $\omega(v)$ is a non empty attractor of $v$, then there exists $\tilde{x}\in L^1(\R_+,\R_+)$ such that $(\tilde{x},0,0)\in \omega(v)$, otherwise the sequence $(x(t_n),y_1(t_n),y_2(t_n))$ would not belong to any neighborhood of $\omega(v)$. This is absurd as we proved that $E_\infty\cup\{E_1,E_2\}$ is an attractor of the compact sets of $X_1\cup X_2$,  in particular $\omega(\{v\})\subset E_\infty\cup\{E_1,E_2\}$, while $(\tilde{x},0,0)\not \in E_\infty\cup\{E_1,E_2\}$ for any $\tilde{x}\in L^1(\R_+,\R_+)$.

Then we proved that $\omega(v)\subset E_\infty$, so $E_\infty$ is an attractor of points in $X_1\cap X_2$ and its stability is directly inherited from the stability of $E_\infty \cup \{E_1,E_2\}$ in $X_1 \cup X_2$.

\item We proved that $E_\sigma:=\omega(v)\subset E_\infty$. Let $\sigma \in (0,1)$ such that $E_{1+\sigma}= (x_1^*,(1-\sigma)y_1^*, \sigma y_2^*)\in \omega(v)$. Then there exists $(t_n)_{n\in\N}\in\R_+^\N$ such that $\lim_{n\to +\infty}t_n=+\infty$ and $\lim_{n\to +\infty}(y_1(t_n),y_2(t_n))=((1-\sigma)y_1^*, \sigma y_2^*)$. Consequently, equation \eqref{Eq:Relation_y1_y2} implies that \begin{equation}\label{Eq:sigma}
(1-\sigma)y_1^*=\dfrac{y_{1,0}}{y_{2,0}}(\sigma y_2^*)^{\mu_1/\mu_2}.
\end{equation}

This equation admits at least one solution since $\omega(v)\neq \emptyset$, and at most one because its left-hand side is a decreasing function of $\sigma$ while its right-hand side is an increasing one.
\end{enumerate}
\end{proof}

\begin{remark}
We could actually note that the set $E_\infty$ is the attractor of compact sets in $X_1\cap X_2$. This comes from the fact that for any compact set $K\subset X_1\cap X_2$, then the function $K\ni v:=(x_0,y_{1,0},y_{2,0})\longmapsto \sigma\in (0,1)$ is continuous, where $\sigma$ is the unique solution of \eqref{Eq:sigma}. Consequently this function attains its bounds $0<\sigma_1<\sigma_2<1$ and $\omega(K)\subset \{(x,y_1,y_2)\in X: y_1\geq (1-\sigma_1)y_1^*, y_2\geq \sigma_1 y_2^*\}\subset X_1\cap X_2$. The first item of the latter proof can then be used similarly.
\end{remark}

\section{Discussion}
In this work, we have shown that introducing age-structure in a resource population shared by two competitors can fundamentally alter the outcome of competition. In contrast with models where only the competitors are structured, for which competitive exclusion generically prevails, we prove that heterogeneity in the interaction kernels may allow for stable coexistence.  We give necessary and sufficient conditions for existence, attractivity and stability of equilibria, including in the critical case where a continuum of infinite number of equilibria exists.

From a biological perspective, our results support the idea that specialization with respect to resource age can act as a mechanism of niche partitioning. The condition that the interaction kernels should not be proportional for stable coexistence (Remark \ref{Remark:ProportionalBeta}) reflects the need for \textit{sufficiently} distinct competitive strategies.

This study could be generalized to $n>2$ competitors, with most of the proofs being barely affected. The major difficulty would lie in determining the conditions for the existence of equilibria with more that two competitors.

As pointed out in Remark \ref{Remark:OptimalBeta}, considering a constant influx of resources of age $0$ implies that the best competitive strategy is to focus on the youngest resources. This was illustrated in Example \ref{Exp:analytic}, where the two predators differ only in their interaction term by a translation factor.

However, feeding partly on the youngest resources is not enough to ensure survival. As shown numerically in Example \ref{Exp:Bifurcation}, a generalist population $y_2$ feeding on resources of all ages may not survive against a specialist population $y_1$ that feeds (exlusively) on resources within a specific age range. From an epidemiological perspective, this can be interpreted as different \textit{Plasmodium} species competing for red blood cells, where \textit{P. vivax} and \textit{P. ovale} preferentially invade reticulocytes (young red blood cells), \textit{P. malariae} primarily targets mature cells, whereas \textit{P. falciparum} can infect cells across all age classes \cite{PaulEtAl2003}.

 This radical optimal competitive strategy could be altered when considering two alternative assumptions that we are exploring in ongoing works. First one can consider the case of linear birth rate (\textit{i.e.} $x(t,0)=\int_0^\infty \lambda(a)x(t,a)\d a$), so that the competitors need to preserve the sustainability of the resource. Alternatively, in an ecological setting, one could reasonably hypothesize that the nutritive value of a prey depends on its age (\textit{i.e.} non-constant functions $c_1$ and $c_2$), this would change the picture in a non trivial way, as predators would face contradictory interests :  being the first to reach the prey while waiting for it to grow before hunting it.  
\bibliographystyle{abbrv}
\bibliography{Biblio.bib}
\end{document}